\newcommand{\utilde}[1]{\underaccent{\tilde}{#1}}
\newcommand{\TheTitle}{Finite element approximation of an obstacle problem for a class of integro--differential operators}
\newcommand{\ShortTitle}{Integro--differential operators}
\begin{document}

\title[\ShortTitle]{{\TheTitle}}

\author{Andrea Bonito}
\address{Department of Mathematics, Texas A\&M University, College Station, TX 77843, USA.}
\email{bonito@math.tamu.edu}
\author{Wenyu Lei}
\address{Department of Mathematics, Texas A\&M University, College Station, TX 77843, USA. Current address: SISSA - International School for Advanced Studies, Via Bonomea 265, 34136 Trieste, Italy}
\email{wenyu.lei@sissa.it}
\author{Abner J.~Salgado}
\address{Department of Mathematics, University of Tennessee, Knoxville, TN 37996, USA.}
\email{asalgad1@utk.edu}

\thanks{AB has been supported in part by NSF grant DMS-1254618. WL has been supported in part by NSF grant DMS-1254618. AJS has been supported in part by NSF grant DMS-1720213.}

\date{Draft version of \today.}
\keywords{Obstacle problem; free boundaries; integro--differential operators; finite elements; Dunford--Taylor integral.}
\subjclass{35R11,                    
35R35,                    
41A29,                    
65K15,                    
65N15,                    
65N30.                    
}

\begin{abstract}
We study the regularity of the solution to an obstacle problem for a class of integro--differential operators. The differential part is a second order elliptic operator, whereas the nonlocal part is given by the integral fractional Laplacian. The obtained smoothness is then used to design and analyze a finite element scheme.
\end{abstract}

\maketitle


\section{Introduction}
\label{sec:intro}

Let $\Omega \subset \mathbb R^d$, $d=1,2,3$, be an open bounded set with boundary $\partial\Omega$. We consider the following obstacle problem: given $f:\Omega \to \Real$, an obstacle $\chi :\overline\Omega \to \Real$ such that $\chi < 0$ on $\partial \Omega$, and a drift $\bbeta:\Omega \to \Real^d$, we want to find $u: \Rd \to \Real$ satisfying
\begin{equation}
\label{eq:minprob}
  \min\left\{ \bit Lu + \bbeta \cdot \GRAD w + \Laps u -f, u - \chi \right\} = 0, \text{ in } \Omega, \qquad u = 0, \text{ in } \complement\Omega.
\end{equation}
Here $\bit\in \polZ_2$; $\complement \Omega$ denotes the complement of $\Omega$; $L$ is a uniformly elliptic, divergence form, and symmetric second order differential operator
\begin{equation}
\label{eq:defofdiffop}
  L w = -\DIV(A\GRAD w) + cw,
\end{equation}
with sufficiently smooth coefficients (more precise conditions will be imposed later); and $\Laps$ with $s\in(0,1)$ denotes the integral fractional Laplacian, \ie
\begin{equation}
\label{eq:defofLaps}
  \Laps w(x) = c_{d,s} \text{p.v.} \int_{\Rd} \frac{w(x)-w(y)}{|x-y|^{d+2s}} \diff y, \qquad c_{d,s} = \frac{2^{2s} s \Gamma(s+\frac{d}{2})}{\pi^{d/2}\Gamma(1-s)},
\end{equation}
where p.v. stands for principal value.

The main motivation to study problem \eqref{eq:minprob} is its relevance in the context of perpetual American options under L\'evy processes (cf. \cite{BL02}). In one dimensional space ($d=1$), the solution $u$ in \eqref{eq:minprob} (but defined in $\Real$ instead of $\Omega$) is the rational price of a perpetual American option against the log-price of the stock assumed to  follow a L\'evy process whose infinitesimal generator is given by $\bit L+\bbeta \cdot \nabla+(-\Delta)^s$. In this context, the non-negative obstacle function $\chi$ is referred to as the payoff function; see \cite[Section 6]{BL02}. 
When $d>1$, problem \eqref{eq:minprob} (again in $\Real^d$ instead of $\Omega$) models multiple assets (cf. \cite{MR1459060}). 
For completeness, we point out that the jump process considered in this paper is a special case of a more general jump processes called tempered stable process.
For the latter, the integral fractional Laplacian in \eqref{eq:minprob} is replaced by a convolution in $\Real^d$ between $u$ and the kernel function
\[
	K(x) = C_0
	\begin{dcases}
	\frac{e^{-C_1|x|}}{|x|^{d+2s}},& |x|<0,\\
	\frac{e^{-C_2|x|}}{|x|^{d+2s}},& |x|>0,\\
	\end{dcases}
\]
where $C_0>0$ and $C_1,C_2\ge 0$. The process is symmetric if $C_1=C_2$ and reduces to the integral fractional Laplacian when $C_1=C_2=0$. We also note that to account for the fact that the original American option pricing problem is defined on the whole space $\Real^d$, one should analyze the so-called localization error between the solution of problem \eqref{eq:minprob} and the solution to the corresponding problem in $\Real^d$. These considerations are out of the scope of this work and we refer to \cite{MR2073930} for the analysis in the one dimensional case with $C_1,C_2>0$.

The goal of this paper is to obtain a finite element approximation to the solution of problem \eqref{eq:minprob} together with the corresponding \emph{a priori} error estimates in the energy space. Since these error estimates rely on the knowledge of the smoothness of the solution, we shall first study the regularity of the variational formulation of problem \eqref{eq:minprob}. Moreover, the nature of the operator at hand depends heavily on the particular values of $\bit$, $\bbeta$, and $s$ to be used, we address the following three different cases:
\begin{enumerate}[A.]
  \item \label{case:frac} \emph{Purely fractional diffusion}: $\bit = 0$, $\bbeta=\mathbf 0$, and $s \in (0,1)$. This corresponds to the obstacle problem for the integral fractional Laplacian.
  
  \item \label{case:fracdrift} \emph{Fractional diffusion with drift}: $\bit =0$, $\bbeta\neq\mathbf 0$, and $s \in [\tfrac12,1)$. In this case, the fractional power is restricted to keep the diffusive part dominant; see Proposition~\ref{prop:drift}.
  
  \item \label{case:diff} \emph{Integro--differential operator}: $\bit =1$ and $s \in (0,1)$.

\end{enumerate}

We remark that the regularity of the solution in Case~\ref{case:frac} has been already studied in \cite{BurkovskaGunzburger} and \cite{BNS}. 
To show the regularity of the result in the remaining cases, the main technique that we shall employ is based on penalizing the violation of the obstacle constraint, much in the spirit of the techniques presented in \cite[Section IV.2]{KinderlehrerStampacchia} and \cite[Section 1.3]{MR1009785}.
We derive regularity estimates for the unconstrained linear problem, which are instrumental to obtain a uniform regularity estimate for the solutions to the penalized problems. Passing to the limit when the penalization parameters tends to zero, we deduce the regularity of the solution to the obstacle problem. 
Since this is critical for the analysis of the proposed numerical method, we also show that the solution to the obstacle problem is continuous and that as a consequence, the so-called complementarity conditions are satisfied.

One of the main issues in the finite element approximation of the obstacle problem \eqref{eq:minprob} is the efficient approximation of the integral fractional Laplacian. We refer to \cite{MR3679865,MR3096457,BLP17}, see also the survey \cite{Bonito2018}, for different approaches. Unlike \cite{BurkovskaGunzburger,BNS}, here we use the method from \cite{BLP17,MR3671494}, \ie we build a numerical scheme based on the Dunford--Taylor integral representation of the bilinear form associated with the action of the integral fractional Laplacian operator; see Section~\ref{sub:DunfordTaylorreview} for a review of this approach. Adapting this technique to our case of interest induces a consistency error in the discretization of a variational inequality. We handle this via a Strang-type argument allowing us to derive rates of convergence in the energy error.

The outline of the paper is as follows. In Section~\ref{sec:notation} we set notation, introduce differential and integral operators, provide a weak formulation of \eqref{eq:minprob}, and show some of its immediate properties. In Section~\ref{sec:regularity} we study the regularity of the solution, the so--called Lagrange multiplier, and the validity of the complementarity conditions. Section~\ref{sec:FEM} provides the finite element algorithm and its error analysis as well. A detailed numerical implementation and numerical tests are provided in Section~\ref{sec:numex}.

\section{Notation and preliminaries}
\label{sec:notation}

In this work $\Omega \subset \Rd$ is a bounded domain with Lipschitz boundary $\partial\Omega$
(we may assume more on $\Omega$ if necessary).
Whenever we write $a \preceq b$ we mean that $a \leq C b$ for a nonessential constant $C$ that might change from line to line. As usual, $a \succeq b$ means $b \preceq a$; $a \asymp b$ means $a \preceq b \preceq a$.
Also, for any real number $a$, the notation $a^-$ henceforth stands for any real number strictly smaller than $a$.

For a normed space $X$, we denote by $X'$ and $\| \cdot \|_X$ its dual and norm, respectively. By $\scl\cdot,\cdot\scr_{X',X}$ we  denote the duality pairing. Unless explicitly stated, $X'$ is always equipped with the operator norm. In the case where $X$ is an inner product space, we denote by $(\cdot,\cdot)_X$ its inner product.

\subsection{Sobolev spaces on domains}
\label{sub:Sobolev}

The standard $\Ldeux$ and $H^m(\Omega)$ function spaces, $m \in \polN$, are normed in the usual way. 
We recall that $\Hunz$ is the closure in $\Hun$ of $C_0^\infty(\Omega)$ --- the space of compactly supported in $\Omega$ and infinitely differentiable functions. Owing to the Poincar\'e inequality, we have that
\[
  \| w \|_{\Hunz} := \| \GRAD w \|_{\Ldeux},
\]
is an equivalent norm on $\Hunz$.

Since $\Hun \subset \Ldeux$ and $\Hunz \subset \Ldeux$ are compatible pairs, we  define the fractional Sobolev spaces by interpolation using the real method 
$$
H^t(\Omega) := (\Hun,\Ldeux)_{1-t,2} \quad \textrm{and } \dot H^t(\Omega) := (\Hunz,\Ldeux)_{1-t,2}, \quad \text{for }t\in (0,1).
$$ 
By convention, $H^0(\Omega)=\dot H^0(\Omega)=L^2(\Omega)$ and $\dot H^1(\Omega)=H^1_0(\Omega)$. However, since the definition of the integral fractional Laplacian \eqref{eq:defofLaps} involves integration over the whole space, we need to introduce yet another family of function spaces. For $t \in [0,2]$ we define
\[
  H^t(\Rd) := \left\{ w : \Rd \to \Real \colon \| w \|_{H^t(\Rd)} < \infty \right\}, \  
    \| w \|_{H^t(\Rd)}^2 := \int_\Rd (1+|\xi|^t) |\calF(w)(\xi)|^2 \diff \xi,
\]
where $\calF$ denotes the Fourier transform.
Furthermore, for any bounded domain $D \subset \Real^d$ and $w:D \to \Real$ we denote by $\widetilde w$ its extension by zero to $\complement D$. Notice that this operator depends on $D$ which may change depending on the context.
However, we decided not to indicate the dependency on $D$ whenever no confusion is possible in order to alleviate the notation.
 With this we define, for $t\in [0,2]$,
\[
  \tHr:= \left\{ w \in \Ldeux \colon \widetilde w \in H^t(\Rd) \right\}, \qquad \| w \|_{\tHr} := \| \widetilde w \|_{H^t(\Rd)}.
\]
We finally set $H^{-t}(\Omega) = (\tHr)'$. 

\begin{remark}[equivalent norm]
\label{r:ths-norm}
A variant of the arguments in the Peetre--Tartar lemma \cite[Lemma A.38]{ERNGuermond} guarantees that the semi-norm 
\[
w \mapsto |w|_{\tHr} := \left( \int_\Rd |\xi|^t |\calF(w)(\xi)|^2 \diff \xi \right)^{1/2}
\] 
is an equivalent norm of $\tHr$. 
\end{remark}

\begin{remark}[norm equivalence for Lipschitz domains]\label{r:norm-equivalence}
For $t\in [0,1]$, it is known that $\dot H^t(\Omega)$ and $\tHr$ are both interpolation scales and coincide (cf. \cite[Lemma 4.11]{CHM15}). We note that these two spaces are also equivalent when $t\in(1,\tfrac32)$ and the norm equivalence constants depend on $\Omega$. This is because $\Omega$ is Lipschitz so that $\tHr=H^t(\Omega)\cap H^1_0(\Omega)=\dot H^t(\Omega)$ when $t\in[1,\tfrac32)$ (cf. \cite[Remark 3.1]{BLP17}). 
\end{remark}

\subsection{Differential and integral operators}
\label{sub:diffintops}

We can now give a proper interpretation to the building blocks of problem \eqref{eq:minprob}.

We begin with the second order operator. We let $A \in C^{0,1}(\overline\Omega,\polS^d)$, where $\polS^d$ is the space of symmetric $d \times d$ matrices, be uniformly bounded and positive definite, \ie there exist constants $a_0, a_1 > 0$ such that
\[
  a_0 |\bv|^2 \le \bv\Tr A(x) \bv \leq a_1 |\bv|^2, \quad \forall \bv \in \Rd, \ \forall x \in \overline\Omega.
\]
In addition, we assume that $c \in C^{0,1}(\overline\Omega)$ is nonnegative. With these assumptions we have that the operator $L:\Hdeux \cap \Hunz \to \Ldeux$ generates the bilinear form
\[
  \calL(v,w) = \int_\Omega \left( \GRAD w\Tr A(x)\GRAD v + c(x)vw \right) \diff x,
\]
which is bounded and coercive on $\Hunz$.

We now study drift on fractional Sobolev spaces. Let $\bbeta \in C^1(\overline\Omega,\Rd)$ be solenoidal, \ie $\DIV \bbeta = 0$. We define, for $v,w \in C_0^\infty(\Omega)$ the bilinear form
\begin{equation}
\label{eq:defofcalD}
  \calD(v,w) = \int_\Omega \bbeta(x) \cdot \GRAD v w \diff x
\end{equation}
and study the properties of $\calD$ next.
\begin{proposition}[drift]
\label{prop:drift}
Let $\bbeta \in C^1(\overline\Omega,\Real^d)$ be solenoidal, \ie $\DIV \bbeta = 0$. Then, for $v \in \tHr$ with $t\in[\tfrac12, 1]$ we have that
\[
  \| \bbeta \cdot \GRAD v \|_{H^{-t}(\Omega)} \preceq \| \bbeta \|_{L^\infty(\Omega,\Real^d)} \| v \|_\tHr.
\]
Moreover, the bilinear form $\calD$, defined in \eqref{eq:defofcalD}, extends continuously to $\tHr \times \tHr$. This, in particular, implies that
\begin{equation}\label{e:beta_div_free_cons}
  \calD(v,v) = 0, \quad \forall v \in \tHr.
\end{equation}
\end{proposition}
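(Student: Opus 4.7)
The proof proceeds in four stages.

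\textbf{Integration by parts on smooth functions.} First, I would work with $v,w \in C_0^\infty(\Omega)$ and use the solenoidal condition $\DIV \bbeta = 0$ to obtain the antisymmetry identity
\[
\calD(v,w) = \int_\Omega \bbeta \cdot \GRAD v \, w \diff x = -\int_\Omega v \, \DIV(w\bbeta) \diff x = -\int_\Omega v\, \bbeta\cdot\GRAD w \diff x = -\calD(w,v).
\]
In particular, choosing $w = v$ yields $\calD(v,v) = 0$ on $C_0^\infty(\Omega)$.

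\textbf{Two endpoint bounds.} Starting from the trivial estimate $|\calD(v,w)| \leq \|\bbeta\|_{L^\infty(\Omega,\Real^d)} \|\GRAD v\|_{L^2(\Omega)} \|w\|_{L^2(\Omega)}$, I would read off a continuous extension of $\calD$ to $\tHone \times L^2(\Omega)$. Combining this with the antisymmetry of Stage~1 gives a second continuous extension to $L^2(\Omega) \times \tHone$ with the same constant. Equivalently, the two endpoints read
\[
|\calD(v,w)| \leq \|\bbeta\|_{L^\infty(\Omega,\Real^d)} \|v\|_\tHone \|w\|_{L^2(\Omega)}, \quad |\calD(v,w)| \leq \|\bbeta\|_{L^\infty(\Omega,\Real^d)} \|v\|_{L^2(\Omega)} \|w\|_\tHone.
\]

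\textbf{Bilinear interpolation.} Invoking bilinear real interpolation between these two endpoints along the scale $(\tHr)_{t\in[0,1]}$ (which by Remark~\ref{r:norm-equivalence} is the same as $(\dot H^t(\Omega))_{t\in[0,1]}$ and thus is genuinely an interpolation scale), I would obtain, for every $\alpha \in [0,1]$,
\[
|\calD(v,w)| \preceq \|\bbeta\|_{L^\infty(\Omega,\Real^d)} \|v\|_{\tilde H^\alpha(\Omega)} \|w\|_{\tilde H^{1-\alpha}(\Omega)}.
\]
Taking $\alpha = t$ with $t \in [\tfrac12,1]$ and using the embedding $\tHr \hookrightarrow \tilde H^{1-t}(\Omega)$ (valid because $1-t \leq t$ on this range) yields both the bound on $\calD$ over $\tHr \times \tHr$ announced in the proposition and, upon taking the supremum over $w$, the $H^{-t}(\Omega)$ bound on $\bbeta \cdot \GRAD v$.

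\textbf{Passing the vanishing property to the limit.} The main delicate point will be confirming that the extension obtained by interpolation is indeed the continuous extension of the original bilinear form from $C_0^\infty(\Omega)$, and then using density of $C_0^\infty(\Omega)$ in $\tHr$ (for $t \in [\tfrac12,1]$) to transfer $\calD(v,v) = 0$ from the smooth case to all of $\tHr$: if $v_n \to v$ in $\tHr$ with $v_n \in C_0^\infty(\Omega)$, then continuity of $\calD$ on $\tHr\times\tHr$ gives $\calD(v_n,v_n) \to \calD(v,v)$, while each $\calD(v_n,v_n) = 0$. The harder step here is the density claim at $t = 1/2$, which is the reason one must work with $\tHr$ (whose definition enforces admissibility of the zero extension) rather than with $H^{1/2}(\Omega)$.
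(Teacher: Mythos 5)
Your proposal is correct and follows essentially the same route as the paper: the same two endpoint estimates (the trivial one from $H^1_0(\Omega)$ into $L^2(\Omega)$, and the dual one from $L^2(\Omega)$ into $H^{-1}(\Omega)$ obtained by integrating by parts against the solenoidal field), interpolation between them, the embedding $\tHr \hookrightarrow \widetilde H^{1-t}(\Omega)$ for $t\geq \tfrac12$, and density of $C_0^\infty(\Omega)$ in $\tHr$. Phrasing the interpolation step as bilinear interpolation of $\calD$ rather than operator interpolation of $v\mapsto\bbeta\cdot\GRAD v$, and making the antisymmetry $\calD(v,w)=-\calD(w,v)$ explicit before passing to the limit, are only cosmetic differences from the argument in the paper.
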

\begin{proof}
The proof follows the argumentations in \cite{PengXXXX}.
We begin by assuming that $v \in\ C^\infty_0(\Omega)$, then we immediately conclude that $\bbeta \cdot \GRAD v \in \Ldeux$ with
\[
  \| \bbeta \cdot \GRAD v \|_\Ldeux \leq \| \bbeta \|_{L^\infty(\Omega,\Real^d)} \| v \|_\Hunz.
\]
Owing to the fact that $\bbeta$ is solenoidal, we also have that
\begin{align*}
  \| \bbeta \cdot \GRAD v \|_\Hmun &= \sup_{0 \neq w \in \Hunz} \frac{ \scl \bbeta \cdot  \GRAD v, w \scr_{\Hmun,\Hunz}}{ \| \GRAD w \|_\Ldeux} = 
    \sup_{0 \neq w \in \Hunz} \frac{ ( v, \bbeta \cdot \GRAD w )_\Ldeux}{ \| \GRAD w \|_\Ldeux} \\ &\leq \|\bbeta \|_{L^\infty(\Omega,\Real^d)} \| v \|_\Ldeux.
\end{align*}
Interpolating the previous two inequalities we then obtain that for $t\in [\tfrac12,1]$
\[
  \| \bbeta \cdot \GRAD v \|_{H^{-t}(\Omega)} \preceq \| \bbeta \|_{L^\infty(\Omega,\Real^d)} \| v \|_{\widetilde{H}^{1-t}(\Omega)} \preceq \| \bbeta \|_{L^\infty(\Omega,\Real^d)} \| v \|_\tHr,
\]
as we intended to show. The proof is complete upon noting that $C^\infty_0(\Omega)$ is dense in $\tHr$.
\end{proof}

We now proceed to define the integral fractional Laplacian given in \eqref{eq:defofLaps}. First, we note that for $w$ in the Schwartz space, this operator is defined by
\[
  \calF\left( \Laps w \right)(\xi) = |\xi|^{2s}\calF(w)(\xi),
\]
Moreover, it induces a bilinear form
\begin{align*}
  a_s(v,w) &= ((-\LAP)^{s/2}v,(-\LAP)^{s/2}w)_{L^2(\Rd)} = \int_{\Rd} |\xi|^{2s} \calF(v)(\xi) \overline{\calF(w)(\xi)} \diff \xi \\
    &= \frac{c_{s,d}}2 \int_\Rd \int_\Rd \frac{(v(x)-v(y))(w(x)-w(y))}{|x-y|^{d+2s}} \diff y \diff x.
\end{align*}
Note that the above considerations remain  meaningful for $v,w \in \tHs$, or strictly speaking to $\widetilde v, \widetilde w \in H^s(\Rd)$, their zero extension outside $\Omega$. 
In addition, Remark~\ref{r:ths-norm} implies that $a_s$ is bounded and coercive on $\tHs$ with the convention 
$$
a_s(v,w) = ((-\LAP)^{s/2}\widetilde v,(-\LAP)^{s/2}\widetilde w)_{L^2(\Rd)}, \qquad \forall v,w \in \tHs.
$$

\subsection{The obstacle problem}
\label{sub:obsprob}

Having introduced the necessary notation we can now give a rigorous meaning to problem \eqref{eq:minprob} and study it. 
To be able to handle all the three cases under consideration (see cases~\ref{case:frac}, \ref{case:fracdrift} and \ref{case:diff} in Section~\ref{sec:intro}) in a unified way, we introduce the two--parameter space
\begin{equation}
\label{eq:defofoV}
  \Vss := \begin{dcases}
           \tHs, &\bit = 0, \\ \Hunz, &\bit =1,
         \end{dcases}
  \quad \| w \|_{\Vss}^2 := \| w \|_{\tHs}^2 + \bit \| w \|_{\Hunz}^2.
\end{equation}
From now on we assume the following assumption on the obstacle:

\begin{assumption}[obstacle]
\label{a:obstacle}
The obstacle $\chi \in C^2(\bar\Omega)$ is such that $\chi < 0$ on $\partial \Omega$. 
\end{assumption}

Under Assumption~\ref{a:obstacle} the admissible set
\begin{equation}\label{e:K}
  \calK := \left\{ w \in \Vss : w \geq \chi\ \mae \Omega \right\} \subset \Vss
\end{equation}
is nonempty, closed and convex. On $\Vss$ we define the bilinear form
\begin{equation}\label{e:calA}
  \calA(v,w) := \bit \calL(v,w) + \calD(v,w) + a_s(v, w), \qquad \forall v, w \in \Vss.
\end{equation}
Owing to Proposition~\ref{prop:drift}, it follows that $\calA$ is bounded and coercive on $\Vss$ for all cases considered.

The weak formulation of problem \eqref{eq:minprob} is defined as follows: given $f \in \Vss'$ find $u \in \calK$ such that
\begin{equation}
\label{eq:VI}
  \calA(u,u-v) \leq \scl f, u-v\scr_{\Vss',\Vss}, \quad \forall v \in \calK.
\end{equation}
Since $\calA$ is coercive, existence and uniqueness of a solution is an immediate consequence of the Lions--Stampacchia theorem \cite[Theorem II.2.1]{KinderlehrerStampacchia}. 

The next theorem guarantees the validity of the complementarity conditions \eqref{eq:minprob}.
Before proceeding, we introduce the Lagrange multiplier
\begin{equation}
\label{eq:defofLambda}
  \Lambda := \bit L u + \bbeta \cdot\GRAD u + \Laps \widetilde u -f \in \Vss'.
\end{equation}

\begin{theorem}[complementarity conditions]
\label{thm:complconds}
The solution $u \in \Vss$ of \eqref{eq:VI} satisfies
\[
  \Lambda \geq 0
\]
in $\Vss'$. In addition, if $u \in \Vss \cap C(\overline \Omega)$ then the complementarity conditions hold, \ie
\[
  \Lambda \geq 0, \qquad u \geq \chi, \qquad \Lambda(u-\chi) = 0
\]
in the sense of distributions.
\end{theorem}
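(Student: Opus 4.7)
The proof splits naturally into the two assertions. For $\Lambda\ge 0$ in $\Vss'$, my plan is to run the classical one-sided perturbation argument directly in the variational inequality. Given any $\varphi\in\Vss$ with $\varphi\ge 0$ almost everywhere, the function $v:=u+\varphi$ satisfies $v\ge u\ge\chi$ and therefore belongs to $\calK$. Substituting this choice into \eqref{eq:VI} and rearranging yields
\[
\calA(u,\varphi)\ge \langle f,\varphi\rangle_{\Vss',\Vss},
\]
which, via the definition \eqref{eq:defofLambda} of $\Lambda$, is precisely $\langle\Lambda,\varphi\rangle_{\Vss',\Vss}\ge 0$. This gives the first claim with no extra regularity on $u$.

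For the complementarity statement, $u\ge\chi$ is the admissibility condition $u\in\calK$, and the sign of $\Lambda$ is already in hand; only the cancellation $\Lambda(u-\chi)=0$ requires work. Since $u\in C(\overline\Omega)$ and $\chi\in C^2(\overline\Omega)$, the non-coincidence set
\[
\omega:=\{x\in\Omega:u(x)>\chi(x)\}
\]
is open. The goal is to show that $\Lambda$ vanishes on $\omega$ in the sense of distributions; once this is established, the distributional product $\Lambda(u-\chi)$, which makes sense because $u-\chi$ is continuous, is supported on $\Omega\setminus\omega=\{u=\chi\}$ and there the multiplier $u-\chi$ is zero, so the product vanishes.

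To show $\Lambda=0$ on $\omega$, I use a two-sided perturbation. Fix $\varphi\in C_0^\infty(\omega)$ and let $K:=\mathrm{supp}\,\varphi\subset\omega$. By continuity of $u-\chi$ and compactness of $K$, there is $\delta>0$ with $u-\chi\ge\delta$ on $K$; hence for every real $\varepsilon$ with $|\varepsilon|\le \delta/\|\varphi\|_{L^\infty(\Omega)}$, the competitor $v:=u+\varepsilon\varphi$ still satisfies $v\ge\chi$ in $\Omega$ and $v=u=0$ in $\complement\Omega$, so $v\in\calK$. Plugging $v$ into \eqref{eq:VI} for both signs of $\varepsilon$ gives the two inequalities $\pm\bigl(\calA(u,\varphi)-\langle f,\varphi\rangle\bigr)\ge 0$, whence $\langle\Lambda,\varphi\rangle_{\Vss',\Vss}=0$ for every $\varphi\in C_0^\infty(\omega)$. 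This is the desired distributional vanishing of $\Lambda$ on $\omega$.

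The step I expect to require the most care is precisely the passage from "$\Lambda$ vanishes on $\omega$ against $C_0^\infty(\omega)$ test functions" to "$\Lambda(u-\chi)=0$ as a distribution on $\Omega$." The cleanest way is to note that the first part of the theorem exhibits $\Lambda$ as a nonnegative continuous linear functional on $\Vss$ that acts nonnegatively on nonnegative $C_0^\infty(\Omega)$ functions; by the standard identification of nonnegative distributions with Radon measures, $\Lambda$ is a locally finite nonnegative Borel measure on $\Omega$, and what we have just proved says that this measure gives zero mass to $\omega$. Since $u-\chi$ is continuous and vanishes on $\Omega\setminus\omega$, the product $(u-\chi)\,\Lambda$ is the zero measure, which is the distributional statement $\Lambda(u-\chi)=0$ claimed in the theorem.
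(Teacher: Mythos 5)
Your proof is correct and follows essentially the same route as the paper: one-sided perturbations $v=u+\varphi$ with $\varphi\ge 0$ for the sign of $\Lambda$, and two-sided perturbations supported in the open non-coincidence set, made admissible by the continuity of $u-\chi$, for the vanishing of $\Lambda$ there. The only difference is that you spell out the final step (identifying $\Lambda$ with a nonnegative Radon measure via Riesz--Schwartz and concluding that $(u-\chi)\Lambda$ is the zero measure), which the paper compresses into ``the conclusion follows'' and only makes explicit later in the proof of the regularity of $\Lambda$.
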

\begin{proof}
Case~\ref{case:frac} is already studied in \cite[Theorem 1.2]{MNS17}; see also \cite[Proposition 2.10]{BNS}.

For Cases~\ref{case:fracdrift} and \ref{case:diff} we write \eqref{eq:VI} as
\[
   \scl \bit Lu + \bbeta \cdot \GRAD u+ \Laps \widetilde u -f, u-v\scr_{\Vss',\Vss} \leq 0, \qquad \forall v \in \calK.
\]
Let now $0 \leq \varphi \in C_0^\infty(\Omega)$ be arbitrary and set $v = u+\varphi \in \calK$ to deduce
\[
  \scl \Lambda, \varphi \scr_{\Vss',\Vss} = \scl  \bit Lu + \bbeta \cdot \GRAD u + \Laps \widetilde u - f, \varphi \scr_{\Vss',\Vss} \geq 0 .
\]
This means $\Lambda \geq 0$ in $\Vss'$ and in the sense of distributions.

In addition, if $u \in C(\overline\Omega)$, then the non-contact set
\[
  N := \left\{ x \in \Omega : u(x) > \chi(x) \right\}
\]
is open. Let $\phi \in C_0^\infty(N)$ and $\vare$ positive but sufficiently small so that $v = u \pm \vare \phi \in \calK$. This choice implies that
\[
  \scl \Lambda,\phi \scr_{\Vss',\Vss} = 0, \quad \forall \phi \in C_0^\infty(N),
\]
and the conclusion follows.
\end{proof}


\section{Regularity}\label{sec:regularity}
In this section we study the regularity of the solution to \eqref{eq:VI}. To achieve this, we first consider the linear problem without the obstacle constraint. Then, using a penalization technique, we transfer these regularity results to the solution  $u$ of \eqref{eq:VI}. In addition, using a Lewy--Stampacchia type argument, we deduce regularity properties of the Lagrange multiplier $\Lambda$ as well as the continuity of $u$, necessary to apply Theorem~\ref{thm:complconds}.

\subsection{Regularity for the linear problem}
\label{sub:reg-linear}
Here we are interested in the regularity of the solution to a linear version of \eqref{eq:VI}. Namely, given $g \in \Vss'$, we let $\Phi_g \in\Vss$ be the (unique) solution of
\begin{equation}
\label{eq:linearprob}
  \calA(\Phi_g,v) = \scl g, v \scr_{\Vss',\Vss}, \quad \forall v \in \Vss,
\end{equation}
where $\calA$ is given by \eqref{e:calA}.
We consider the regularity of each case separately.
Notice that each case requires different assumptions on the data.

\subsubsection{Case~\ref{case:frac}: Purely fractional diffusion}
Assuming $\Omega$ is of class $C^\infty$, the regularity of $\Phi_g$ was studied in \cite{MR3276603,MR0214910}. The next proposition gathers these result in our notation.

\begin{proposition}[regularity for Case~\ref{case:frac}]
\label{p:reglinear-B}
Assume that the domain $\Omega$ is of class $C^\infty$ and that, for $s\in(0,1)$, we have that $g\in H^t(\Omega)$ with $t\ge -s$. In this setting we have that $\Phi_g$, the solution of \eqref{eq:linearprob} with $\bit=0$ and $\bbeta= \mathbf 0$, satisfies
\[
	\Phi_g\in \widetilde H^{\min\{t+2s,(s+\frac12)^-\}}(\Omega), \qquad 
	\|\Phi_g\|_{\widetilde H^{\min\{t+2s,(s+\frac12)^-\}}(\Omega)}\preceq \|g\|_{H^{t}(\Omega)} .
\]
\end{proposition}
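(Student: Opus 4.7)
Since the result is essentially a restatement of classical and recent regularity theorems for the fractional Dirichlet problem on smooth domains, the plan is to assemble the bound from two independent pieces. First I would dispose of the base case $t=-s$: here $g \in H^{-s}(\Omega) = (\widetilde{H}^s(\Omega))'$, and coercivity of $a_s$ on $\widetilde{H}^s(\Omega)$ (Remark~\ref{r:ths-norm}) together with Lax--Milgram immediately gives $\Phi_g \in \widetilde{H}^s(\Omega)$ with $\|\Phi_g\|_{\widetilde{H}^s(\Omega)} \preceq \|g\|_{H^{-s}(\Omega)}$. This matches the claim since $\min\{-s+2s,(s+\tfrac12)^-\}=s$.

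For $t>-s$ the plan is to invoke the sharp $L^2$-based Sobolev regularity developed in \cite{MR3276603}, whose foundations lie in the pseudodifferential calculus of \cite{MR0214910}. The two key ingredients from that theory are: (i) interior elliptic regularity, which gains $2s$ derivatives on compact subsets of $\Omega$; and (ii) the structural description of the solution near $\partial\Omega$, namely $\Phi_g = d^s v$ with $d$ the distance to $\partial\Omega$ and $v$ as smooth as the data permits. A short Fourier computation shows that the zero extension of $d^s$ to $\complement\Omega$ belongs to $H^r(\mathbb{R}^d)$ only for $r<s+\tfrac12$; this is precisely the origin of the threshold $(s+\tfrac12)^-$ in the $\widetilde{H}^r$ scale.

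Combining the interior gain with this boundary obstruction produces the announced $\min\{t+2s,(s+\tfrac12)^-\}$ regularity: in the regime $t+2s \le s+\tfrac12$ the full $2s$-derivative gain is achieved, as predicted by the Fourier symbol of $(-\Delta)^s$, whereas in the regime $t+2s > s+\tfrac12$ the boundary factor $d^s$ saturates the estimate. The corresponding quantitative bound follows directly from the a priori estimates established in \cite{MR3276603}.

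The main obstacle I anticipate is purely translational: Grubb formulates her theorems in terms of H\"ormander $\mu$-transmission spaces, while the proposition is stated in terms of the $\widetilde{H}^r(\Omega)$ scale used throughout this paper. Reconciling the two requires invoking the norm equivalences collected in Remark~\ref{r:norm-equivalence} and verifying that no endpoint losses arise beyond those already absorbed in the $(\cdot)^-$ notation, particularly around $r = 1$ and near the critical value $r = s+\tfrac12$.
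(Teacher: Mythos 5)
Your proposal is correct and takes essentially the same route as the paper: the paper gives no proof of Proposition~\ref{p:reglinear-B} at all, stating only that it ``gathers'' the results of \cite{MR3276603,MR0214910} in the paper's notation, and your sketch (interior gain of $2s$ derivatives, the $d^s$ boundary factor producing the $(s+\tfrac12)^-$ cap in the $\widetilde H^r(\Omega)$ scale, and the translation from $\mu$-transmission spaces) is a faithful and in fact more detailed account of how those cited results yield the statement.
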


We also refer to \cite{MR3620141} for regularity results when $\Omega$ is Lipschitz and $g$ is H\"older continuous.

\subsubsection{Case~\ref{case:fracdrift}: Fractional diffusion with drift}
Recall that in this case we restrict the fractional power $s$ to $[\tfrac12,1)$. We also have $\bit=0$ and $\bbeta\neq \mathbf 0$. The proof is based on the regularity estimates for Case~\ref{case:frac} presented in Proposition~\ref{p:reglinear-B} and techniques developed in \cite{PengXXXX}.

\begin{proposition}[regularity for Case~\ref{case:fracdrift}]\label{l:reglinear-C}
Assume that the domain $\Omega$ is of class $C^\infty$ and that $g\in L^2(\Omega)$. Let $\Phi_g$ be the solution of \eqref{eq:linearprob} with $s \in [\tfrac12,1)$, $\bit=0$ and $\bbeta\neq \mathbf 0$.
\begin{enumerate}[a)]
\item \label{item:sge12} If $s>\tfrac12$, then $\Phi_g\in \widetilde H^{(s+\frac12)^-}(\Omega)$ and satisfies 
\[
	\|\Phi_g\|_{\widetilde H^{(s+\frac12)^-}(\Omega)} \preceq \|g\|_{L^2(\Omega)} .
\]

\item \label{item:seq12} If $s=\tfrac12$, there exists a positive constant $C_{\frac12}$ such that when $\|\bbeta\|_{L^\infty(\Omega,\Real^d)}< C_{\frac12}$, we have that $\Phi_g\in \widetilde H^{1^-}(\Omega)$ with the corresponding estimate. Otherwise, that is when $\|\bbeta\|_{L^\infty(\Omega,\Real^d)}\ge C_{\frac12}$, then there exists $\delta\in (0,\tfrac12)$ such that $\Phi_g\in \widetilde H^{\frac12+\delta}(\Omega)$ with the corresponding estimate.
\end{enumerate}
\end{proposition}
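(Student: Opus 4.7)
The plan is to treat the drift as a perturbation of the purely fractional problem and transfer regularity via Proposition~\ref{p:reglinear-B}. Each application of that proposition to the perturbed right-hand side $g - \bbeta\cdot\nabla\Phi_g$ gains $2s-1$ units of Sobolev regularity; this quantity is strictly positive for $s>\tfrac12$ and vanishes at $s=\tfrac12$, which is what drives the dichotomy in the statement.

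First I would record the baseline energy estimate $\|\Phi_g\|_{\widetilde H^s(\Omega)} \preceq \|g\|_{L^2(\Omega)}$, which follows from coercivity of $a_s$, the identity~\eqref{e:beta_div_free_cons}, and the embedding $L^2(\Omega) \hookrightarrow (\widetilde H^s(\Omega))'$. Then I would rewrite the weak formulation as the purely fractional problem
\[
  a_s(\Phi_g, v) = \langle g - \bbeta \cdot \nabla \Phi_g,\, v\rangle, \qquad \forall\, v \in \widetilde H^s(\Omega),
\]
and rely on the interpolation step inside the proof of Proposition~\ref{prop:drift}, which actually furnishes
\[
  \|\bbeta \cdot \nabla v\|_{H^{\sigma-1}(\Omega)} \preceq \|\bbeta\|_{L^\infty(\Omega,\mathbb R^d)} \|v\|_{\widetilde H^\sigma(\Omega)}, \qquad \sigma \in [\tfrac12, 1],
\]
to estimate the modified data in negative-order spaces.

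For part~(\ref{item:sge12}), $s > \tfrac12$, applying Proposition~\ref{p:reglinear-B} with $t = s - 1 \ge -s$ yields $\Phi_g \in \widetilde H^{\min\{3s - 1,\,(s+\tfrac12)^-\}}(\Omega)$. If $s \ge \tfrac34$ this already saturates the ceiling $(s+\tfrac12)^-$; otherwise I bootstrap the same argument, gaining $2s - 1 > 0$ per iteration, and reach $\widetilde H^{(s+\tfrac12)^-}(\Omega)$ after finitely many steps.

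For part~(\ref{item:seq12}), $s = \tfrac12$, the iteration stalls and I would instead set up a Banach fixed point argument. Fixing $\epsilon \in (0,\tfrac12)$, define $\mathcal T_\epsilon : \widetilde H^{1-\epsilon}(\Omega) \to \widetilde H^{1-\epsilon}(\Omega)$ by letting $\mathcal T_\epsilon(w)$ solve the purely fractional problem with right-hand side $g - \bbeta \cdot \nabla w$. Combining Proposition~\ref{p:reglinear-B} at $t = -\epsilon$ (with constant $C_\epsilon$) and the interpolation bound above shows that the Lipschitz constant of $\mathcal T_\epsilon$ is controlled by $C_\epsilon \|\bbeta\|_{L^\infty(\Omega,\mathbb R^d)}$. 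Setting $C_{\tfrac12}$ to be the reciprocal of this implicit constant, the map is a contraction whenever $\|\bbeta\|_{L^\infty(\Omega,\mathbb R^d)} < C_{\tfrac12}$; its unique fixed point must coincide with $\Phi_g$, proving $\Phi_g \in \widetilde H^{1^-}(\Omega)$. When $\|\bbeta\|_{L^\infty(\Omega,\mathbb R^d)} \ge C_{\tfrac12}$ the same contraction scheme is run at the lower level $\widetilde H^{1/2 + \delta}(\Omega)$; by a more careful balancing of the interpolation and regularity constants one picks $\delta = \delta(\|\bbeta\|_{L^\infty(\Omega,\mathbb R^d)}) \in (0,\tfrac12)$ so that the Lipschitz constant again drops below $1$, yielding $\Phi_g \in \widetilde H^{1/2+\delta}(\Omega)$.

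The main obstacle is the critical case $s = \tfrac12$: since the bootstrap gain vanishes, extra regularity must be extracted via a contraction argument that quantitatively trades the regularity margin $\epsilon$ (respectively $\delta$) against the drift size $\|\bbeta\|_{L^\infty(\Omega,\mathbb R^d)}$. Pinning down how the constant $C_\epsilon$ in Proposition~\ref{p:reglinear-B} depends on the regularity index, so as to exhibit the threshold $C_{\tfrac12}$ and the $\delta$ in the large-drift regime, is the delicate technical point.
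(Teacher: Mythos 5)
Your treatment of part~(\ref{item:sge12}) and of the small--drift half of part~(\ref{item:seq12}) matches the paper: the same rewriting $a_s(\Phi_g,v)=\langle g-\bbeta\cdot\nabla\Phi_g,v\rangle$, the same bootstrap gaining $2s-1$ per step via Propositions~\ref{prop:drift} and~\ref{p:reglinear-B}, and your Banach fixed point for small $\|\bbeta\|_{L^\infty(\Omega,\Real^d)}$ is just the Neumann series $(I-B)^{-1}=\sum_j B^j$ that the paper writes down (with $\eta=1$). Up to that point the proposal is sound.

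The large--drift half of part~(\ref{item:seq12}) has a genuine gap. You propose to ``run the same contraction scheme at the lower level $\widetilde H^{1/2+\delta}(\Omega)$'' and to recover a Lipschitz constant below $1$ by ``a more careful balancing of constants.'' This cannot work as stated: the Lipschitz constant of your undamped map $w\mapsto T^{-1}(g-\bbeta\cdot\nabla w)$ on $\widetilde H^{1/2+\delta}(\Omega)$ is of the form $C_\delta\|\bbeta\|_{L^\infty(\Omega,\Real^d)}$, and $C_\delta$ does not tend to $0$ as $\delta\downarrow 0$ (it tends to the constant at the energy level $1/2$). So once $\|\bbeta\|_{L^\infty(\Omega,\Real^d)}\ge C_{\frac12}$ the constant stays $\ge 1$ for every $\delta$, and shrinking $\delta$ buys you nothing. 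The paper's mechanism is different in two essential ways. First, it introduces a relaxation parameter $\eta\in(0,1]$ and works with $B=(1-\eta)I-\eta c_{\frac12}T^{-1}\bbeta\cdot\nabla$ rather than with the pure iteration. Second, and crucially, at the base level $\dot H^{\frac12}(\Omega)$ it computes $\|Bw\|_{\dot H^{\frac12}(\Omega)}^2$ exactly: the cross terms vanish because $T$ is symmetric and $\calD(v,v)=0$ (the drift is skew--adjoint, \eqref{e:beta_div_free_cons}), leaving $\|Bw\|_{\dot H^{\frac12}(\Omega)}^2=(1-\eta)^2\|w\|_{\dot H^{\frac12}(\Omega)}^2+\eta^2 c_{\frac12}^2\|T^{-1}\bbeta\cdot\nabla w\|_{\dot H^{\frac12}(\Omega)}^2$. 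Optimizing $\eta$ then makes $B$ a strict contraction on $\dot H^{\frac12}(\Omega)$ \emph{for every drift size}. Only then does one interpolate between this contractive bound at level $\tfrac12$ and the merely bounded (possibly non--contractive) estimate at level $1^-$ to get a contraction at level $(\tfrac12+r)^-$ for $r$ small, which is where $\delta$ comes from. Without the relaxation and the orthogonality-driven cancellation, there is no small parameter to exploit in the large--drift regime, so this step of your argument needs to be replaced rather than just made more careful.
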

\begin{proof}
We consider each case separately. 

We begin the treatment of Case~\ref{item:sge12} by rewriting the linear problem as follows: find $\Phi_g\in \tHs$ satisfying
\[
	a_s(\Phi_g, v) = (g,v)_{L^2(\Omega)}-\scl \bbeta \cdot \nabla \Phi_g,v\scr_{H^{-s}(\Omega),\tHs} =: \scl G,v\scr_{H^{-s}(\Omega),\tHs}, \ \forall v\in \tHs.
\]
Now, using a bootstrapping argument, we improve the regularity of $\Phi_g$. 
Starting from $\Phi_g\in \tHs$, we first notice that, according to Proposition~\ref{prop:drift}, $G\in H^{s-1}(\Omega)$. Thanks to Proposition~\ref{p:reglinear-B} with $t=s-1$ we get $\Phi_g\in \widetilde H^{\min\{3s-1,(s+\frac12)^-\}}(\Omega)$.
Invoking Propositions~\ref{prop:drift} and~\ref{p:reglinear-B} again, we deduce that
\[
	\Phi_g\in \widetilde H^{\min\{5s-2,(3s-\frac12)^-,(s+\frac12)^-\}}(\Omega) = \widetilde H^{\min\{5s-2,(s+\frac12)^-\}}(\Omega).
\]
Repeating the above argument $n$ times, we arrive at
\[
	\Phi_g\in \widetilde H^{\min\{(2n+1)s-n,(s+\frac12)^-\}}(\Omega).
\] 
From the assumption $s>\frac 1 2$, we have $(2n+1)s-n\to \infty$ as $n\to \infty$ so that setting $n=\lceil\tfrac1{4s-2}\rceil$ yields the desired result for case~\ref{item:sge12}, \ie $\Phi_g \in \widetilde H^{(s+\frac12)^-}(\Omega)$.

Let us now show Case~\ref{item:seq12} using a perturbation argument. Denote by $T: \widetilde H^{\frac12}(\Omega)\to H^{-\frac12}(\Omega)$ the unbounded operator satisfying
\[
	\scl T g, v \scr_{H^{-\frac12}(\Omega),\widetilde H^{\frac12}(\Omega)} 
	= c_{\frac12}~a_{\frac12}(g,v), \quad \forall v\in \widetilde H^{\frac12}(\Omega) ,
\]
where $c_{\frac12}$ denotes the normalization constant such that 
\[
	\|w\|_{\dot H^{\frac12}(\Omega)}^2=c_{\frac12} \|w\|_{\widetilde H^{\frac12}(\Omega)}^2.
\]
As we shall see, the purpose of the normalization by $c_{1/2}$ is to relate the functional spaces $\widetilde H^{r}(\Omega)$ to the interpolation spaces $\dot H^r(\Omega)$ and invoke  operator interpolation results.
Proposition~\ref{p:reglinear-B} guarantees that the inverse of $T$ is a bounded operator mapping $H^{t}(\Omega)$ to $\widetilde H^{\min\{t+1,1^-\}}(\Omega)$ with $t\ge -\tfrac12$. Given $\eta \in (0,1]$, we rewrite the linear problem \eqref{eq:linearprob} in the form of a perturbation of the identity 
\[
	\scl (I-B)\Phi_g , v \scr_{H^{-\frac12}(\Omega),\widetilde H^{\frac12}(\Omega)}  
	=  \scl \eta c_{\frac12} T^{-1} g ,v \scr_{H^{-\frac12}(\Omega),\widetilde H^{\frac12}(\Omega)} , \quad \forall v\in \tHs ,
\]
where $B:=(1-\eta)I - \eta c_{\frac12} T^{-1}\bbeta \cdot \nabla$. We next investigate the mapping properties of the operator $B$ using the equivalent interpolation norm $\dot H^t(\Omega)$ with $t\in[\tfrac12,1)$. For $w \in \dot H^{1^-}(\Omega)$, we have
\begin{equation}\label{i:B-bound}
\begin{aligned}
	\|Bw \|_{\dot H^{1^-}(\Omega)} & \le (1-\eta) \|w \|_{\dot H^{1^-}(\Omega)} 
	+ \eta\|c_{\frac12}T^{-1}\bbeta \cdot \nabla w \|_{\dot H^{1^-}(\Omega)} \\
	&\le  ((1-\eta) +C\eta\|\bbeta\|_{L^\infty(\Omega,\Real^d)})\|w \|_{\dot H^{1^-}(\Omega)}=: M_1(\eta)\|w \|_{\dot H^{1^-}(\Omega)} . \\
\end{aligned}
\end{equation}
Here the constant $C$ depends on the constants in the estimates of Proposition~\ref{prop:drift}, Proposition~\ref{p:reglinear-B} and $c_{\frac12}$. Setting $C_{\frac12} := 1/C$, the condition $\|\bbeta\|_{L^\infty(\Omega,\Real^d)}<C_{\frac12}$ guarantees that $M_1(\eta)<1$ for any $\eta\in(0,1]$. In turn, this implies that $I-B: \dot H^{1^-}(\Omega) \rightarrow \dot H^{1^-}(\Omega)$ is invertible and
\[
\begin{aligned}
	&\|(I-B)^{-1}\|_{\widetilde H^{1^-}(\Omega) \to \widetilde H^{1^-}(\Omega)} \\
	& \qquad \preceq \|(I-B)^{-1}\|_{\dot H^{1^-}(\Omega) \to \dot H^{1^-}(\Omega)} 
	&\le \sum_{j=0}^\infty \|B\|_{\dot H^{1^-}(\Omega) \to \dot H^{1^-}(\Omega)}^j 
	\le \frac1{1-M_1(\eta)} .
\end{aligned}
\] 
Hence we deduce that $\Phi_g\in \widetilde H^{1^-}(\Omega)$ and
\[
	\|\Phi_g\|_{\widetilde H^{1^-}(\Omega)} \le \eta\|(I-B)^{-1}\|_{\widetilde H^{1^-}(\Omega) \to \widetilde H^{1^-}(\Omega)} 
	 \|c_{\frac12}T^{-1}g\|_{\widetilde H^{1^-}(\Omega)} \preceq \|g\|_{L^2(\Omega)} .
\]
Instead, when $\|\bbeta\|_{L^\infty(\Omega,\Real^d)}\ge C_{\frac12}$, we note that for $ w \in \dot H^{\frac12}(\Omega)$,
\begin{align*}
	\|B w \|_{\dot H^{\frac12}(\Omega)}^2 &= c_{\frac12}\|B w \|_{\widetilde H^{\frac12}(\Omega)}^2 
    =\scl T B w ,B w  \scr_{H^{-\frac12}(\Omega),\widetilde{H}^{\frac12}(\Omega)} \\
	&= (1-\eta)^2\scl T w , w \scr_{H^{-\frac12}(\Omega),\widetilde{H}^{\frac12}(\Omega)} \\
	&-(1-\eta)\eta c_{\frac12} \left[ 
    \scl T w , T^{-1}\bbeta \cdot \nabla w \scr_{H^{-\frac12}(\Omega),\widetilde{H}^{\frac12}(\Omega)} \right. \\
    &+ \left. \scl T T^{-1}\bbeta \cdot \nabla w , w \scr_{H^{-\frac12}(\Omega),\widetilde{H}^{\frac12}(\Omega)} \right]\\
	&+\eta^2 c_{\frac12}^2 \scl T T^{-1}\bbeta \cdot \nabla w , T^{-1}\bbeta \cdot \nabla w \scr_{H^{-\frac12}(\Omega),\widetilde{H}^{\frac12}(\Omega)}\\
	& = (1-\eta)^2 \| w \|_{\dot H^{\frac12}(\Omega)}^2 + \eta^2 |c_{\frac12}^2 \| T^{-1}\bbeta \cdot \nabla w \|_{\dot H^{\frac12}(\Omega)}^2\\
	& \le (1-\eta)^2 \| w \|_{\dot H^{\frac12}(\Omega)}^2 +\widetilde C \eta^2\|\bbeta\|_{L^\infty(\Omega,\Real^d)}^2 \| w \|_{\dot H^{\frac12}(\Omega)}^2 ,
\end{align*}
where in the third equality we used the symmetry of $T$ and \eqref{e:beta_div_free_cons}. The positive constant $\widetilde C$ depends on the same parameters as $C_{1/2}$. The optimal choice for $\eta$ is $\eta^* := 1/(1+\widetilde C\|\bbeta\|_{L^\infty(\Omega,\Real^d)}^2)\in (0,1)$, which leads to
\begin{equation}\label{i:B-bound-2}
	\|B w \|_{\dot H^{\frac12}(\Omega)} \le \sqrt{1-\frac1{1+\widetilde C\|\bbeta\|_{L^\infty(\Omega,\Real^d)}^2}}\| w \|_{\dot H^{\frac12}(\Omega)}=:M_2\| w \|_{\dot H^{\frac12}(\Omega)},
\end{equation}
with $M_2<1$.
From  \eqref{i:B-bound} and \eqref{i:B-bound-2}, we obtain by interpolation
\[
	\|B w \|_{\dot H^{(\frac12+r)^-}(\Omega)}
	\le M_1(\eta^*)^{2r} M_2^{1-2r} \| w \|_{\dot H^{(\frac12+r)^-}(\Omega)},\quad\text{for }r\in (0,\tfrac12) .
\]
and upon selecting $r>0$ sufficiently small so that
\[
	M_1^{2r}(\eta^*) M_2^{1-2r}<1,
\]
we obtain that $B$ is a bounded operator in $\dot H^{(\frac12+r)^-}(\Omega)$ and so 
$$
\Phi_g \in  \widetilde H^{\frac12+\delta}(\Omega)
$$
for some $\delta \in (0,\frac 1 2)$ as asserted.
\end{proof}

\subsubsection{Case~\ref{case:diff}: Integro--differential operator}

We let $\bit=1$ and immediately notice that $\calV_{s,1} = \Hunz$ for all values of $s$. Our results rely on the following regularity assumption for a second order elliptic problem.

\begin{assumption}[elliptic regularity]
\label{a:elliptic-regularity}
Let $g\in H^{-1}(\Omega)$, and $w_g\in H^1_0(\Omega)$ be the unique solution of
\begin{equation}\label{e:without-frac}
	\calL(w_g,v)  = \scl g,v\scr_{H^{-1}(\Omega),\Hunz}, \quad \forall v\in \Hunz.
\end{equation}
There exists $r\in(0,1]$ and a constant constant $C_r$ so that
\[
	\|w_g\|_{H^{1+r}(\Omega)}\leq C_r \|g\|_{H^{-1+r}(\Omega)}.
\]
In particular, we have
\[
	\|w_g\|_{H^{1+\gamma}(\Omega)}\leq C_r \|g\|_{H^{-1+\gamma}(\Omega)}
\]
for all $\gamma \in (0,r]$.
\end{assumption}

We note that $r$ and $C_r$ depend on the smoothness of the domain $\Omega$ and the coefficients $A$ and $c$. For example, if $\Omega$ is a polytope and the bilinear form $\calL$ is the Dirichlet form, \ie
\begin{equation}\label{e:dirichlet-form}
	\calL(v,w) = \int_\Omega \nabla v \cdot \nabla w \diff x, \quad \forall v,w\in H^1_0(\Omega),
\end{equation}
then, according to \cite{dauge}, Assumption~\ref{a:elliptic-regularity} holds for some $ \frac 1 2 < r \leq 1$ that depends on the shape of the domain.

To concisely state the regularity result obtained in this case, we define
\begin{equation}\label{eq:defofmu}
  \mu := \mu(s,r) := \begin{dcases}
                                 1+ r  & 0<s<\tfrac54-\tfrac r 2, \\
                                 (\tfrac72-2s)^-,  & \tfrac54-\tfrac r 2\le s <1.
                   \end{dcases}
\end{equation}

\begin{proposition}[regularity for Case~\ref{case:diff}]
\label{prop:reglinearidiff}
Let $\Phi_g$ be the solution to problem \eqref{eq:linearprob} with $\bit=1$. Let $ r \in(0,1]$ be the regularity index given in Assumption~\ref{a:elliptic-regularity} and $g \in \Ldeux$. Then we have that 
\[
  \Phi_g \in  H^\mu(\Omega) \cap H^1_0(\Omega), \qquad 
  \| \Phi_g \|_{H^\mu(\Omega)} \preceq \| g \|_\Ldeux,
\]
where the hidden constant depends on $\Omega$, $C_r$ in Assumption~\ref{a:elliptic-regularity} and $\bbeta$. 
\end{proposition}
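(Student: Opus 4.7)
The plan is to move everything except the second-order operator $L$ to the right-hand side of \eqref{eq:linearprob} and treat the resulting equation
\[
\calL(\Phi_g, v) = \scl g - \bbeta\cdot\nabla\Phi_g - \Laps\widetilde{\Phi_g}, v \scr_{\Hmun,\Hunz}, \quad \forall v\in \Hunz,
\]
as a second-order elliptic problem, then apply Assumption~\ref{a:elliptic-regularity} in a bootstrap to successively improve the Sobolev regularity of $\Phi_g$. Two independent ceilings will govern how far this can be pushed: the elliptic regularity ceiling $1+r$ coming from Assumption~\ref{a:elliptic-regularity}, and a ceiling at $\tfrac32$ coming from the zero-extension step $\Phi_g \mapsto \widetilde{\Phi_g}$ that is needed to make sense of $\Laps\widetilde{\Phi_g}$. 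The piecewise definition \eqref{eq:defofmu} reflects exactly which of the two ceilings is activated first.

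The starting point is coercivity of $\calA$, which provides $\Phi_g \in \Hunz$ with $\|\Phi_g\|_{\Hunz} \preceq \|g\|_{L^2(\Omega)}$. When $s \le \tfrac12$, one has directly $\widetilde{\Phi_g} \in H^1(\Rd)$ and therefore $\Laps\widetilde{\Phi_g} \in H^{1-2s}(\Rd) \subset L^2(\Omega)$; since $\bbeta\cdot\nabla\Phi_g \in L^2(\Omega)$ as well, the bracketed right-hand side is in $L^2(\Omega)$. Assumption~\ref{a:elliptic-regularity} applied with $\gamma = r$ then yields $\Phi_g \in H^{1+r}(\Omega)$ in a single step, which matches $\mu = 1+r$ since $\tfrac12 < \tfrac54 - \tfrac{r}2$.

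For $s > \tfrac12$ I will iterate. Suppose $\Phi_g \in H^{t_k}(\Omega) \cap \Hunz$ with $t_k \in [1,\tfrac32)$. By Remark~\ref{r:norm-equivalence}, $\widetilde{\Phi_g} \in H^{t_k}(\Rd)$, so $\Laps\widetilde{\Phi_g} \in H^{t_k-2s}(\Omega)$; because $s > \tfrac12$ this is strictly less regular than $\bbeta\cdot\nabla\Phi_g \in H^{t_k-1}(\Omega)$, so the bracketed right-hand side lies in $H^{t_k-2s}(\Omega)$. Assumption~\ref{a:elliptic-regularity} with $\gamma = \min\{t_k-2s+1, r\}$ yields $\Phi_g \in H^{t_{k+1}}(\Omega)$ with $t_{k+1} = 1+\gamma$, gaining at least $2-2s > 0$ per iteration until one of the ceilings is reached. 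After finitely many iterations $t_k$ is arbitrarily close to $\min\{1+r,\tfrac32\}$, hence $\widetilde{\Phi_g} \in H^{(3/2)^-}(\Rd)$ and a final application of Assumption~\ref{a:elliptic-regularity} with $\gamma = \min\{(\tfrac52-2s)^-, r\}$ produces $\Phi_g \in H^{1+\gamma}(\Omega)$. A direct comparison shows $1+\gamma = \mu(s,r)$ in both branches of \eqref{eq:defofmu}: $\gamma = r$ (so $\mu = 1+r$) when $s < \tfrac54 - \tfrac{r}2$, and $\gamma = (\tfrac52 - 2s)^-$ (so $\mu = (\tfrac72-2s)^-$) when $s \ge \tfrac54 - \tfrac{r}2$. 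The stability estimate $\|\Phi_g\|_{H^\mu(\Omega)} \preceq \|g\|_{L^2(\Omega)}$ is accumulated by chaining the constants through the finite sequence of iterations.

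The delicate point will be the careful use of Remark~\ref{r:norm-equivalence}'s identification $\widetilde H^t(\Omega) = H^t(\Omega) \cap \Hunz$ for $t \in [1,\tfrac32)$: without this the fractional term $\Laps\widetilde{\Phi_g}$ cannot gain any regularity inside the bootstrap, and the $\tfrac32$ ceiling it imposes is precisely what creates the second branch of $\mu$. A minor secondary check is that $\bbeta \in C^1(\overline\Omega)$ suffices to preserve $H^{t_k-1}$-regularity of $\bbeta \cdot \nabla \Phi_g$, which is straightforward because $t_k - 1 \in [0,\tfrac12)$ throughout the iteration.
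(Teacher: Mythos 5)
Your proposal is correct and follows essentially the same route as the paper: rewrite \eqref{eq:linearprob} as a second-order problem with the drift and fractional terms moved to the right-hand side, then bootstrap via Assumption~\ref{a:elliptic-regularity}, with the two ceilings $1+r$ (elliptic regularity) and $\tfrac32$ (zero extension, via Remark~\ref{r:norm-equivalence}) producing exactly the two branches of $\mu$ in \eqref{eq:defofmu}. The only difference is organizational — you run a uniform induction on the regularity index $t_k$ where the paper splits explicitly into sub-intervals of $s$ — and the one imprecision (asserting $\widetilde{\Phi_g}\in H^{(3/2)^-}(\Rd)$ even when $r<\tfrac12$ caps the bootstrap at $1+r<\tfrac32$) is harmless, since the weaker bound $\widetilde{\Phi_g}\in H^{1+r}(\Rd)$ still yields $\gamma=r$ and hence $\mu=1+r$ in that regime.
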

\begin{proof}
Notice that the unique solution $\Phi_g \in \Hunz$ of problem \eqref{eq:linearprob} is also the unique solution of
\begin{equation}
\label{eq:modifiedlinear}
  \calL(\Phi_g, v) = (g-\bbeta \cdot \GRAD\Phi_g ,v)_\Ldeux - \scl \Laps \widetilde \Phi_g, v \scr_{\Hmun,\Hunz}=: \scl G, v \scr_{\Hmun,\Hunz}.
\end{equation}

We discuss the case $r\ge\tfrac12$ and split the proof in several (sub-)cases.
\begin{enumerate}[$\bullet$]
  \item \emph{Case $s \in (0,\tfrac{2- r }2]$}: According to \cite[Theorem XI.2.5]{TaylorME} we have that $\Laps \widetilde \Phi_g \in H^{1-2s}(\Omega) \subset H^{-1+ r }(\Omega)$. From the elliptic regularity assumption we conclude that $\Phi_g \in H^{1+ r }(\Omega)=H^\mu(\Omega)$ with the corresponding estimate.
  
  \item \emph{Case $s \in (\tfrac{2- r }2,\tfrac34]$}: If this is the case, we now have that $G \in H^{1-2s}(\Omega)$ so that invoking the elliptic regularity assumption again with $\gamma=2-2s<r$ (see Assumption~\ref{a:elliptic-regularity}) and using the norm equivalence property described in Remark~\ref{r:norm-equivalence}, we obtain that $\Phi_g \in H^{3-2s}(\Omega)\cap\Hunz$. 
 Because $s\in (\tfrac{2- r }2,\tfrac34]$, we can only conclude that $\Phi_g \in \widetilde{H}^{\tfrac32-\epsilon}(\Omega)$.
 However, we can repeat the process because in that case $\Laps \widetilde \Phi_g \in H^{(\frac32-2s)^-}(\Omega)$ and thus $G \in H^{(\frac32-2s)^-}(\Omega)$. 
 From the elliptic regularity assumption we obtain that $\Phi_g \in H^{\min\{(\frac72-2s)^-,1+ r \}}(\Omega) = H^\mu(\Omega)$ with the corresponding estimate.
  
  \item \emph{Case $s \in (\tfrac34,\tfrac78]$}: Proceeding as the previous case, we have that $\Phi_g\in H^{3-2s}(\Omega)\cap H^1_0(\Omega)=\widetilde H^{3-2s}(\Omega)$ and thus $G \in H^{3-4s}(\Omega)$. 
  The elliptic regularity assumption this time with $\gamma = \min\{r,4-2s\}$ yields $\Phi_g \in H^{\min\{5-4s,1+ r \}}(\Omega)\cap H^1_0(\Omega)\subset \widetilde{H}^{(\frac32)^-}(\Omega)$. Continuing further, we have $G \in H^{(\frac32-2s)^-}(\Omega)$, and finally that $\Phi_g \in H^{\min\{(\frac72-2s)^-,1+ r \}}(\Omega) = H^\mu(\Omega)$, with the corresponding estimate.

  \item \emph{General case, $s \in (\tfrac{4n-1}{4n},\tfrac{4n+3}{4n+4}]$ for $n\geq2$}: We proceed as in the previous steps to obtain after a finite number of iterations that $\Phi_g \in H^{\min\{(\frac72-2s)^-,1+ r \}}(\Omega) = H^\mu(\Omega)$ with the corresponding estimate.
\end{enumerate}

The proof for the case $r<\frac 1 2$ is omitted for brevity as it follows invoking similar arguments but decomposing $(0,1)$ as $(0,\tfrac{2- r }2]$, $(\tfrac{2- r }2, \tfrac{4- r }4]$
and $\cup_{n\geq 2} (\tfrac{2n- r }{2n},\tfrac{2n+2- r }{2n+2}]$.
\end{proof}

\begin{remark}[polygonal domains in $\Real^2$]\label{r:poly}
Let us consider two special cases in $\Real^2$. If $\Omega$ is a convex polygon in $\Real^2$, the coefficient matrix $A$ and zero order term $c$ are smooth enough, then we obtain full elliptic regularity for problem \eqref{e:without-frac}, \ie $ r =1$. In this case, according to Proposition~\ref{prop:reglinearidiff}, 
\[
	\Phi_g\in H^1_0(\Omega)\cap
	\begin{dcases}
                                 H^2(\Omega) & 0<s<\tfrac34, \\
                                 H^{(\tfrac72-2s)^-}(\Omega),  &\tfrac34\le s <1.
        \end{dcases}
\]
If, on the other hand, $\Omega$ is a L--shaped domain (e.g. $(-1,1)^2\setminus [0,1]^2$), $\bbeta=\mathbf 0$ and $\calL(\cdot,\cdot)$ is the Dirichlet form, then we have that $ r =\tfrac23$ and hence
\[
	\Phi_g\in H^1_0(\Omega)\cap
	\begin{dcases}
                                 H^{\tfrac53}(\Omega) & 0<s<\tfrac{11}{12}, \\
                                 H^{(\tfrac72-2s)^-}(\Omega),  &\tfrac{11}{12}\le s <1.
        \end{dcases}
\]
\end{remark}

\begin{remark}[continuity of $\Phi_g$]\label{r:continuity}
If the elliptic regularity index in Assumption~\ref{a:elliptic-regularity} is above $\frac 12$, then by $\Phi_g \in C(\overline{\Omega})$ by Sobolev embedding.
\end{remark}


\subsection{Regularity of the obstacle problem}
\label{sub:regobstacle}

The regularity estimates for the linear problem are instrumental to obtain regularity properties of the solution to the obstacle problem \eqref{eq:VI}.
To achieve this, we follow the penalization ideas from \cite[Section IV.2]{KinderlehrerStampacchia}, see also \cite{MNS17,MR3090147}. 
We begin by recalling that Assumption~\ref{a:obstacle} guarantees, at least heuristically, that the contact set is separated from the boundary of the domain $\partial\Omega$.
Of particular importance below is that, once again owing to Assumption~\ref{a:obstacle}, it is possible to extend $\chi$ to a larger domain: we denote by $\calW \subset \Rd$ a domain with smooth boundary such that $\Omega \Subset \calW$ and by $\calE \chi \in C^2_0(\calW)$ an extension of $\chi$ to $\calW$
\[
  \calE\chi_{|\Omega} = \chi, \qquad \calE\chi_{|\complement\Omega} \leq 0.
\]
The choice of $\calW$ and $\calE\chi$ is arbitrary but irrelevant for the results derived below.

Next, we assume certain regularity and compatibility between the operator and problem data.

\begin{assumption}[smoothness and data compatiblity]
\label{a:mainassumption_regularity}
Let $p > \max\{2, d/(2s) \}$ in Cases~\ref{case:frac} and \ref{case:fracdrift}, and $p \geq 2$ in Case~\ref{case:diff}.
We have that $f \in L^p(\Omega)$ and
\begin{equation}\label{e:def_F}
  F:= \bit L \chi +\bbeta \cdot \nabla\chi +(-\Delta)^s \widetilde{\calE\chi}-f
 \end{equation}
 is an absolutely continuous measure with respect to the Lebesgue measure and its Radon--Nikodym derivative belongs to $L^p(\Omega)$.
\end{assumption}

Furthermore, we gather the assumptions required for the regularity results obtained in Section~\ref{sub:reg-linear} on the linear problem in the next assumption.

\begin{assumption}[regularity of the linear problem]
\label{a:regLinear}
The regularity results of the linear problem \eqref{eq:linearprob} obtained in Section~\ref{sub:reg-linear} are valid, \ie we assume that
\begin{enumerate}[$\bullet$]
  \item Case~\ref{case:frac}: The domain $\Omega$ is of class $C^\infty$.
  
  \item Case~\ref{case:fracdrift}: The domain $\Omega$ is of class $C^\infty$.
  If $s=\frac12$, the drift magnitude is sufficiently small, \ie $\|\bbeta \|_{L^\infty(\Omega,\Real^d)} < C_{\frac12}$, where $C_{\frac12}$ is the constant in Proposition~\ref{l:reglinear-C}.
  
  \item Case~\ref{case:diff}: The elliptic regularity assumption (Assumption~\ref{a:elliptic-regularity}) holds for an index $r \in (\frac12,1]$.
\end{enumerate}
\end{assumption}

\remark[Case~\ref{case:fracdrift}]{
To simplify the discussion, we do not discuss the case when $\|\bbeta\|_{L^\infty(\Omega,\Real^d)}\ge C_{\tfrac12}$.
However, the argumentation below extends to this case in view of the regularity property obtained in Proposition~\ref{l:reglinear-C}. 
}

We prove below that the solution to the obstacle problem belongs to $H^\sigma(\Omega)$, where
\begin{equation}
\label{eq:defofsigma}
  \sigma := \sigma(\bit,\bbeta,s, r) = \begin{dcases}
                              \min\left\{2s,\left(s+\frac12\right)^-\right\}, & \textrm{Case}~\ref{case:frac}, \\
                              \left(s+\frac12\right)^-, & \textrm{Case}~\ref{case:fracdrift}, \\
                              \mu(s,r), & \textrm{Case}~\ref{case:diff},
                            \end{dcases}
\end{equation}
where $\mu(s,r)$ is defined by \eqref{eq:defofmu}.
The first step is to analyze a penalization problem.

\subsubsection{Penalization}
\label{sub:penalty}

Given $\vare>0$, let $\vartheta_\vare \in C^\infty(\Real)$ be such that $|\vartheta_\vare|\le 1$, it is non increasing and
\[
	\vartheta_\vare(t) := 
	\begin{dcases}
		1, & t\le0,\\
		0, & t\ge\vare.
	\end{dcases}
\]

Under Assumption~\ref{a:obstacle} and for $f \in \Ldeux$, the penalized problem constructs an approximation of $u$ by $u_\vare\in \Vss$ defined as the solution to 
\begin{equation}\label{e:penalized-problem}
	\calA(u_\vare,v) = (\max\{F,0\} \vartheta_\vare(u_\vare-\chi) +f,v)_{L^2(\Omega)},
	\quad \forall v\in \Vss,
\end{equation}
where $\calA$ is given by \eqref{e:calA}; compare with \eqref{eq:minprob}. 
Notice that \eqref{e:penalized-problem} is a variational problem with a strictly coercive and monotone operator on $\Vss$ and therefore has a unique solution. The next lemma gathers properties of the penalized solution.

\begin{lemma}[two--sided uniform bounds]\label{l:pointwise}
Suppose that Assumption~\ref{a:obstacle} holds and $f \in \Ldeux$. Let $u, u_\vare \in \Vss$ be the solutions to \eqref{eq:VI} and \eqref{e:penalized-problem}, respectively. Then we have that
\[
	u\le u_\vare\le u+\vare,\quad \text{a.e. in } \Omega.
\]
\end{lemma}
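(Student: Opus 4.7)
The plan is to establish the two bounds separately by testing the variational inequality \eqref{eq:VI} and the penalized equation \eqref{e:penalized-problem} against appropriate truncations of $u_\vare - u$, exploiting the structure of $\vartheta_\vare$ to make the right-hand side of the difference either vanish or have a favorable sign. The admissibility of each truncation as an element of $\Vss$ is obtained via the Lipschitz composition property: if $T: \Real\to\Real$ is $1$-Lipschitz with $T(0)=0$ and $v\in \Vss$, then $T(v)\in \Vss$ because, viewing $T$ as acting on the extension by zero to $\Rd$, $T\circ \tilde v$ still vanishes off $\Omega$ and $|T(a)-T(b)|\le|a-b|$ gives control of both $L^2$ and Gagliardo seminorms.

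I first show the lower bound $u\le u_\vare$, preceded by the auxiliary claim $u_\vare\ge \chi$ a.e. Test \eqref{e:penalized-problem} with $w=(\chi-u_\vare)^+$; this lies in $\Vss$ because, globally on $\Rd$, it coincides with $(\calE\chi - \widetilde{u_\vare})^+$, which vanishes off $\Omega$ thanks to $\calE\chi\le 0$ there. On the support of $w$, $\vartheta_\vare(u_\vare-\chi)=1$, so
\[
  \calA(u_\vare,w)=(F^++f,w)_{L^2(\Omega)}\ge (F+f,w)_{L^2(\Omega)}.
\]
Integration by parts for $L$, the identity $\calD(\calE\chi,w)=\int_\Omega\bbeta\cdot\nabla\chi\,w$, and the Plancherel/Parseval representation $a_s(\calE\chi,w)=\int_\Omega (-\Delta)^s\widetilde{\calE\chi}\,w$ (note that $w$ extends by zero) rewrite the right-hand side as $\bit\calL(\calE\chi,w)+\calD(\calE\chi,w)+a_s(\calE\chi,w)$; hence $\calA(u_\vare-\calE\chi,w)\ge 0$. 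On the other hand, setting $Z=\widetilde{u_\vare}-\calE\chi$ one has $\widetilde{w}=Z^-$, and the pointwise inequalities $\nabla(Z)\cdot \nabla(Z^-)=-|\nabla Z^-|^2$, $\bbeta\cdot\nabla Z^-\, Z^- $ integrates to zero by $\DIV\bbeta=0$, and $(Z(x)-Z(y))(Z^-(x)-Z^-(y))\le -(Z^-(x)-Z^-(y))^2$ together yield $\calA(u_\vare-\calE\chi,w)\le -\bit\calL(w,w)-a_s(\widetilde w,\widetilde w)\le 0$. Coercivity forces $w=0$. With $u_\vare\ge\chi$ in hand, $v=\min\{u,u_\vare\}\in \calK$ is a legal test in \eqref{eq:VI}; testing also \eqref{e:penalized-problem} with $(u-u_\vare)^+$ and subtracting gives $\calA(u-u_\vare,(u-u_\vare)^+)\le -(F^+\vartheta_\vare(u_\vare-\chi),(u-u_\vare)^+)_{L^2}\le 0$. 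The analogous pointwise inequality $(z(x)-z(y))(z^+(x)-z^+(y))\ge (z^+(x)-z^+(y))^2$ together with $\calL(z,z^+)=\calL(z^+,z^+)$ and $\calD(z,z^+)=0$ makes the left-hand side control $\|(u-u_\vare)^+\|_\Vss^2$, so $u\le u_\vare$.

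For the upper bound $u_\vare\le u+\vare$, I take $v=u+(u_\vare-u-\vare)^+\in \calK$ as a test in \eqref{eq:VI}; the truncation is admissible by the Lipschitz composition remark because $(0-\vare)^+=0$, so extension by zero off $\Omega$ is preserved. Testing \eqref{e:penalized-problem} with the same $w:=(u_\vare-u-\vare)^+$ and subtracting yields
\[
  \calA(u_\vare-u,w)\le (F^+\vartheta_\vare(u_\vare-\chi),w)_{L^2(\Omega)}.
\]
The key observation is that on $\{w>0\}$ one has $u_\vare\ge u+\vare\ge \chi+\vare$, so $\vartheta_\vare(u_\vare-\chi)=0$ and the right-hand side is zero. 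With $z=u_\vare-u$ and $w=(z-\vare)^+$, the same truncation calculus (Lipschitz shift by $\vare$ does not alter the pointwise inequalities) gives $\calA(z,w)\ge c\|w\|_\Vss^2$, and hence $w=0$.

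The main obstacle is not a single estimate but the bookkeeping needed to make every truncation a bona fide element of $\Vss$ and to correctly identify the action of the (a priori exterior) operator $(-\Delta)^s\widetilde{\calE\chi}$ when $\calE\chi\notin \Vss$. In particular, the lower bound requires the intermediate result $u_\vare\ge \chi$ to render $\min\{u,u_\vare\}$ admissible in \eqref{eq:VI}, and the auxiliary inequality on the fractional part $(Z(x)-Z(y))(Z^-(x)-Z^-(y))\le -(Z^-(x)-Z^-(y))^2$ must be handled globally on $\Rd$ to absorb the contribution of $-\calE\chi$ outside $\Omega$.
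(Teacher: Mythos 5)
Your proposal is correct and follows essentially the same route as the paper: first establish $u_\vare\ge\chi$ by testing the penalized equation with $(\chi-u_\vare)^+$ and exploiting the truncation inequalities for $\calL$, $\calD$ and $a_s$ (you re-derive the pointwise inequality that the paper imports from Lemma~2.1 of \cite{MNS17}), then obtain $u\le u_\vare$ by a supersolution comparison, and finally get $u_\vare\le u+\vare$ by testing with $(u_\vare-u-\vare)^+$ and observing that $\vartheta_\vare(u_\vare-\chi)$ vanishes on its support. The only cosmetic difference is that where the paper cites \cite[Theorem~6.4]{KinderlehrerStampacchia} for the comparison step, you unfold that proof directly by testing \eqref{eq:VI} with $\min\{u,u_\vare\}$.
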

\begin{proof}
We start by noting that for $w\in \Vss$
$$
\calL(\max\{w,0\},\max\{w,0\}) + \calD(\max\{w,0\},\max\{w,0\}) =  \calL(w,\max\{w,0\}) + \calD(w,\max\{w,0\}). 
$$
Owing to \cite[Lemma 2.1, iii)]{MNS17}, this property also holds for $a_s$, \ie
\[
  a_s(\max\{w,0\},\max\{w,0\}) \le a_s(w,\max\{w,0\}).
\]
This together with the coercivity of $\calA$ yield
\begin{align*}
	\|\max\{\chi-u_\vare,0\}\|^2_{\Vss} 
	 \le & \bit \calL(\chi-u_\vare,\max\{\chi-u_\vare,0\}) + a_s (\calE \chi - u_\vare, \max\{{\calE\chi}- u_\vare,0\} ) \\
	&+ \calD (\chi-u_\vare,\max\{\chi-u_\vare,0\}).
\end{align*}
Hence, the definition  \eqref{e:def_F} of $F$ and the relation \eqref{e:penalized-problem} satisfied by $u_\vare$ imply that 
\begin{equation*}
\begin{split}
\|\max\{\chi-u_\vare,0\}\|^2_{\Vss} & \preceq	  (F,\max\{\chi-u_\vare,0\})_{L^2(\Omega)}  \\
&\qquad  - (\max\{F,0\}\vartheta_\vare(u_\vare-\chi),\max\{\chi-u_\vare,0\})_{L^2(\Omega)} \\
&\preceq(\max\{F,0\}(1-\vartheta_\vare(u_\vare-\chi)),\max\{\chi-u_\vare,0\})_{L^2(\Omega)}.
\end{split}
\end{equation*}
Observing that $\vartheta_\vare(u_\vare-\chi) =1$ whenever $\chi -u_\vare \geq 0$, we deduce that $\|\max\{\chi-u_\vare,0\}\|_{\Vss}=0$ and in particular $u_\vare \geq \chi$ a.e. in $\Omega$.
In other words, we have that $u_\vare\in \calK$. 
Since $\vartheta_\vare \geq 0$, $\max\{F,0\}\vartheta_\vare + f\geq f$ and therefore $u_\vare$ is a supersolution to problem \eqref{eq:VI} (\cf \cite[Definition 5.6]{KinderlehrerStampacchia}). Following the argumentation in the proof of Theorem~6.4 in \cite{KinderlehrerStampacchia}, we then obtain that $u_\vare\ge u$. This proves the first claimed inequality.

For the second inequality, we proceed similarly but invoking part iii of Lemma 2.3 in \cite{MNS17} instead of part iii of Lemma 2.1 to write
\begin{align*}
	& \|\max\{u_\vare-u-\vare,0\}\|_{\Vss}^2 \\
&\qquad \preceq  \bit \calL(u_\vare-u,\max\{u_\vare-u-\vare,0\}) + a_s (u_\vare- u , \max\{u_\vare- u-\vare,0\}) \\
	& \qquad  \qquad + \calD (u_\vare-u,\max\{u_\vare-u-\vare,0\}) \\
	& \qquad \preceq \bit \calL(u_\vare,\max\{u_\vare-u-\vare,0\}) + a_s (u_\vare, \max\{u_\vare-\vare,0\}) \\
	& \qquad  \qquad + \calD (u_\vare,\max\{u_\vare-u-\vare,0\}) - (f,\max\{u_\vare-u-\vare,0\})\\
	& \qquad \preceq (\max\{F,0\} \vartheta_\vare(u_\vare-\chi) ,\max\{u_\vare-u-\vare,0\})_{L^2(\Omega)}
	=0 .
\end{align*}
Therefore, we have $u_\vare\le u+\vare$ a.e. in $\Omega$.
This completes the proof.
\end{proof}

We are now in position to derive the main result on the regularity of the solution to the obstacle problem.

\begin{theorem}[regularity of $u$]
\label{thm:regularityVI}
Suppose that Assumptions~\ref{a:obstacle},~\ref{a:mainassumption_regularity} and \ref{a:regLinear} hold. Then the solution $u \in \Vss$ of the obstacle problem \eqref{eq:VI} satisfies $u \in H^\sigma(\Omega)$, where  $\sigma$ is given by \eqref{eq:defofsigma}.
Moreover, we have
\[
  \|u\|_{H^\sigma(\Omega)} \preceq \|f\|_{L^2(\Omega)} + \|\max\{F,0\}\|_{L^2(\Omega)}.
\]
\end{theorem}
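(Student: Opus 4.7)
The strategy is the penalization argument already scaffolded by Lemma~\ref{l:pointwise}: I derive a uniform $H^\sigma$ bound for the family $\{u_\vare\}_{\vare>0}$ by treating \eqref{e:penalized-problem} as a linear problem with a nice right-hand side, and then pass $\vare \to 0^+$ using the pointwise sandwich $u \leq u_\vare \leq u + \vare$.

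\textbf{Step 1: a uniform $L^2$ bound on the right-hand side of \eqref{e:penalized-problem}.} Since $|\vartheta_\vare| \leq 1$ pointwise, the function
\[
  g_\vare := \max\{F,0\}\,\vartheta_\vare(u_\vare-\chi) + f
\]
satisfies $\|g_\vare\|_{L^2(\Omega)} \leq \|\max\{F,0\}\|_{L^2(\Omega)} + \|f\|_{L^2(\Omega)}$ with a bound that is independent of $\vare$. Note that under Assumption~\ref{a:mainassumption_regularity} both quantities are finite.

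\textbf{Step 2: uniform $H^\sigma$ bound on $u_\vare$.} Freezing $u_\vare$ on the right-hand side of \eqref{e:penalized-problem}, the function $u_\vare$ is the unique solution of the linear problem \eqref{eq:linearprob} with datum $g_\vare \in L^2(\Omega)$. Applying the appropriate linear regularity result from Section~\ref{sub:reg-linear} in each case --- Proposition~\ref{p:reglinear-B} with $t=0$ in Case~\ref{case:frac}, Proposition~\ref{l:reglinear-C}\ref{item:sge12} (together with the small-drift branch of \ref{item:seq12} for $s=\tfrac12$, as permitted by Assumption~\ref{a:regLinear}) in Case~\ref{case:fracdrift}, and Proposition~\ref{prop:reglinearidiff} in Case~\ref{case:diff} --- one obtains in each case that $u_\vare \in H^\sigma(\Omega)$ (after identifying $u_\vare$ with its zero extension in Cases~\ref{case:frac}-\ref{case:fracdrift}, so that $\widetilde H^\sigma(\Omega) \hookrightarrow H^\sigma(\Omega)$) together with
\[
  \|u_\vare\|_{H^\sigma(\Omega)} \preceq \|g_\vare\|_{L^2(\Omega)} \preceq \|f\|_{L^2(\Omega)} + \|\max\{F,0\}\|_{L^2(\Omega)},
\]
where the constant is independent of $\vare$.

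\textbf{Step 3: passage to the limit.} By the uniform bound of Step 2 and the reflexivity of $H^\sigma(\Omega)$, there exists $u^\star \in H^\sigma(\Omega)$ and a subsequence, still denoted $u_\vare$, with $u_\vare \rightharpoonup u^\star$ in $H^\sigma(\Omega)$. On the other hand, Lemma~\ref{l:pointwise} yields $\|u_\vare - u\|_{L^\infty(\Omega)} \leq \vare$, so $u_\vare \to u$ uniformly (in particular in $L^2(\Omega)$). Weak limits in $H^\sigma(\Omega)$ and limits in $L^2(\Omega)$ must agree, hence $u^\star = u$ a.e., so that $u \in H^\sigma(\Omega)$, and the weak lower semicontinuity of the norm yields
\[
  \|u\|_{H^\sigma(\Omega)} \leq \liminf_{\vare \to 0^+} \|u_\vare\|_{H^\sigma(\Omega)} \preceq \|f\|_{L^2(\Omega)} + \|\max\{F,0\}\|_{L^2(\Omega)},
\]
which is the asserted estimate.

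\textbf{Anticipated main difficulty.} The genuinely delicate ingredient is Step 2, but all of that work has already been invested in Propositions~\ref{p:reglinear-B}, \ref{l:reglinear-C}, and \ref{prop:reglinearidiff}; here one simply needs to match function spaces carefully (in particular, the $\widetilde H^\sigma$ output of the fractional propositions versus the $H^\sigma \cap H_0^1$ output of the integro--differential case) to the $H^\sigma(\Omega)$ conclusion of the theorem. A secondary subtlety is that the right-hand side in \eqref{e:penalized-problem} implicitly depends on $u_\vare$ through $\vartheta_\vare(u_\vare - \chi)$; this is harmless here because the regularity estimates are linear in the datum and we only need the pointwise bound $|\vartheta_\vare| \leq 1$ to decouple.
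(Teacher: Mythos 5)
Your proposal is correct and follows essentially the same penalization route as the paper: a uniform $H^\sigma(\Omega)$ bound on $u_\vare$ obtained by viewing \eqref{e:penalized-problem} as the linear problem with datum bounded in $L^2(\Omega)$ via $|\vartheta_\vare|\le 1$, followed by a weak-compactness passage to the limit identified through Lemma~\ref{l:pointwise}. Your identification of the limit is in fact marginally cleaner than the paper's (you use the uniform bound $0\le u_\vare-u\le\vare$ directly to get $L^2$ convergence to $u$, whereas the paper invokes compact embedding plus dominated convergence), but this is a cosmetic difference within the same argument.
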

\begin{proof}
It suffices to observe that under Assumptions~\ref{a:obstacle}, and \ref{a:mainassumption_regularity}, the right--hand side of the penalized problem \eqref{e:penalized-problem} belongs to $L^2(\Omega)$.
Whence, the conditions necessary to invoke Assumption~\ref{a:regLinear} are fulfilled, and the regularity results of Section~\ref{sub:reg-linear} imply that
$u_\vare \in H^\sigma(\Omega)$ and that the following estimate holds
\[
	\|u_\vare\|_{H^\sigma(\Omega)} \preceq \|f\|_{L^2(\Omega)} + \|\max\{F,0\}\|_{L^2(\Omega)} .
\]
Hence, there exists a sequence $\{ u_{\vare_j} \}_{j\geq 0}$ with $\vare_j \to 0$ when $j\to \infty$ and $\overline u \in H^\sigma(\Omega)$ such that $u_{\vare_j}$ converges weakly to $\overline u$ in $H^\sigma(\Omega)$. 
The compact embedding of $H^\sigma(\Omega)$ into $L^2(\Omega)$ guarantees that $u_{\vare_j}$ (up to a not relabeled subsequence)  strongly converges to $\overline u$ in $L^2(\Omega)$. 
According to Lemma~\ref{l:pointwise}, we also have that $u_{\vare_j}$ converges to $u$ almost everywhere and so $u=\overline u$ almost everywhere thanks to the Lebesgue dominated convergence theorem.
This completes the proof.
\end{proof}

\begin{remark}[another penalization]
\label{rm:otherpenalty}
Notice that, at least in Case~\ref{case:diff}, we could have used the penalization technique detailed in \cite[Section IV.5]{KinderlehrerStampacchia}. This would allow for the more general differential operator $L$ with suitable monotonicity and coercivity properties.
\end{remark}

\subsubsection{Regularity of $\Lambda$ and continuity of $u$}
\label{ssub:regLambda}

The numerical  approximation of the obstacle problem proposed bellow requires (i) further regularity of the Lagrange multiplier $\Lambda$, defined in \eqref{eq:defofLambda} and (ii) the validity of the complementary conditions  \eqref{eq:minprob}.
In view of Theorem~\ref{thm:complconds}, the later requires the continuity of the solution to the obstacle problem.
This section is devoted to these two properties.

Let us begin by showing the regularity of the Lagrange multiplier. We propose a modification of Theorem 4.2.1 in \cite{MR880369} 
and emphasize that the latter cannot be directly applied.
Indeed, the abstract Theorem 4.2.1 in \cite{MR880369} requires that  $\calA(\chi,\cdot) \in \Vss'$, which is not meaningful in our context (we can only apply $\Laps$ to an extension $\calE \chi$).

\begin{lemma}[Lewy-Stampacchia type estimate]\label{l:LS}
Under Assumptions~\ref{a:obstacle}, \ref{a:mainassumption_regularity} and \ref{a:regLinear} then 
\[
  \scl \Lambda ,\phi\scr_{\Vss',\Vss} \le (\max\{F,0\},\phi)_{L^2(\Omega)}, \quad\forall \phi\in C_0^\infty(\Omega),\phi\ge 0 .
\]
\end{lemma}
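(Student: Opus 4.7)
The natural strategy is to establish the inequality at the penalized level and then pass to the limit $\vare \to 0$, thereby sidestepping the need to apply $\calA(\chi,\cdot)$ directly as in the abstract framework of \cite{MR880369}. Introduce the penalized Lagrange multiplier
\[
  \Lambda_\vare := \bit L u_\vare + \bbeta \cdot \nabla u_\vare + \Laps \widetilde{u_\vare} - f \in \Vss',
\]
where $u_\vare$ is the solution to the penalized problem \eqref{e:penalized-problem}. Subtracting $(f,v)_{L^2(\Omega)}$ from both sides of \eqref{e:penalized-problem} gives, for every $v \in \Vss$,
\[
  \scl \Lambda_\vare, v \scr_{\Vss',\Vss} = (\max\{F,0\}\,\vartheta_\vare(u_\vare-\chi),v)_{L^2(\Omega)} .
\]

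Testing with $v=\phi\in C_0^\infty(\Omega)$, $\phi\ge 0$, and using the pointwise bounds $\max\{F,0\}\ge 0$, $0\le \vartheta_\vare \le 1$, and $\phi\ge 0$, I immediately obtain
\[
  \scl \Lambda_\vare, \phi \scr_{\Vss',\Vss} \le (\max\{F,0\},\phi)_{L^2(\Omega)} ,
\]
which is the desired estimate at the penalized level, with an $\vare$-independent right--hand side.

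It remains to pass to the limit $\vare \to 0$ on the left--hand side. By the proof of Theorem~\ref{thm:regularityVI}, the family $\{u_\vare\}$ is uniformly bounded in $H^\sigma(\Omega)$; since $\sigma\ge s$ in Cases~\ref{case:frac} and \ref{case:fracdrift}, and $\sigma\ge 1$ in Case~\ref{case:diff}, the same family is uniformly bounded in $\Vss$. Combined with the $L^\infty$ convergence $u_\vare \to u$ from Lemma~\ref{l:pointwise}, this gives $u_\vare \rightharpoonup u$ weakly in $\Vss$ (the full family converges by uniqueness of the limit). Because the maps $v\mapsto \calL(v,\phi)$, $v\mapsto \calD(v,\phi)$, and $v\mapsto a_s(v,\phi)$ are continuous linear functionals on $\Vss$ for each fixed $\phi\in C_0^\infty(\Omega)$ (invoking Proposition~\ref{prop:drift} for the drift term in Cases~\ref{case:fracdrift} and \ref{case:diff}), each term composing $\scl \Lambda_\vare,\phi\scr_{\Vss',\Vss}$ converges to its counterpart for $\Lambda$, and the conclusion follows.

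The main (mild) subtlety lies in this passage to the limit, which hinges on the uniform $H^\sigma$-estimate from Theorem~\ref{thm:regularityVI}; the inequality itself is immediate once the penalized formulation is in hand, being a direct consequence of $0\le \vartheta_\vare \le 1$.
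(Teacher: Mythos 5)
Your proof is correct and follows essentially the same route as the paper's: both work at the penalized level, exploit $0\le\vartheta_\vare\le1$ together with $\phi\ge0$ to get the $\vare$-independent bound, and then pass to the limit $\vare\to0$. The only immaterial difference is the limit passage itself: the paper transfers the operators onto the test function $\phi$ by duality and uses the strong $L^2(\Omega)$ convergence of $u_{\vare_j}$, whereas you keep them on $u_\vare$ and invoke weak convergence in $\Vss$ together with the continuity of $\calA(\cdot,\phi)$; both arguments are valid.
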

\begin{proof}
As in the proof of Theorem~\ref{thm:regularityVI}, we construct a subsequence $u_{\vare_j}$ strongly converging in $L^2(\Omega)$ to $u$.
Hence, for all non-negative $\phi\in C_0^\infty(\Omega)$, we have
\begin{align*}
\scl \Lambda ,\phi\scr_{\Vss',\Vss} &= 
  \int_\Omega \left[ u\left(\bit L \phi + (-\Delta)^s\widetilde\phi -\bbeta \cdot \nabla\phi\right)  -f\phi\right] \diff x\\
	&=\lim_{j \to \infty}  \int_\Omega \left[ u_{\vare_j} \left(\bit L \phi + (-\Delta)^s\widetilde\phi -\bbeta \cdot \nabla\phi\right)  -f\phi\right] \diff x \\
	&=\lim_{j \to \infty} \int_\Omega\max\{F,0\}\vartheta_{\vare_j}(u_{\vare_j}-\chi)\phi\,\diff x
	\le \left(\max\{F,0\},\phi\right)_{L^2(\Omega)},
\end{align*}
as claimed.
\end{proof}

We can now derive the regularity of the Lagrange multiplier. 
The proof follows from Lemma~\ref{l:LS} and, essentially, repeats the arguments given in \cite[Theorem 4.2.4]{MR880369}.

\begin{theorem}[regularity of $\Lambda$]
\label{thm:Lambdaregular}
Suppose that Assumptions~\ref{a:obstacle}, \ref{a:mainassumption_regularity} and \ref{a:regLinear} hold. Then we have that the Lagrange multiplier $\Lambda$, defined in \eqref{eq:defofLambda}, satisfies
\[
  \Lambda \in L^p(\Omega), \qquad \| \Lambda \|_{L^p(\Omega)} \leq \| \max\{F,0\} \|_{L^p(\Omega)}.
\]
\end{theorem}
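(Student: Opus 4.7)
The plan is to combine two distributional bounds on $\Lambda$ --- non-negativity coming from Theorem~\ref{thm:complconds} and the upper bound from Lemma~\ref{l:LS} --- to identify $\Lambda$ pointwise with an $L^p(\Omega)$ function via the Schwartz structure theorem for non-negative distributions together with the Radon--Nikodym theorem.

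First, I would consolidate the two distributional bounds. From Theorem~\ref{thm:complconds} we have $\scl \Lambda, \phi\scr_{\Vss',\Vss} \ge 0$, and Lemma~\ref{l:LS} provides the complementary inequality
\[
  \scl \Lambda, \phi\scr_{\Vss',\Vss} \le \int_\Omega \max\{F,0\}\, \phi\, \diff x,
\]
both valid for every non-negative $\phi \in C_0^\infty(\Omega)$. Thus $\Lambda$ and $\max\{F,0\}\,\diff x - \Lambda$ are both non-negative distributions on $\Omega$.

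Next, I would invoke the classical theorem of L.~Schwartz: every non-negative distribution on $\Omega$ is uniquely represented by a non-negative Radon measure. This yields a Radon measure $\mu_\Lambda \ge 0$ with $\scl \Lambda, \phi\scr_{\Vss',\Vss} = \int_\Omega \phi\, \diff\mu_\Lambda$ for every $\phi \in C_0^\infty(\Omega)$; applying the same theorem to $\max\{F,0\}\,\diff x - \Lambda$ produces an inequality of Radon measures $\mu_\Lambda \le \max\{F,0\}\, \diff x$ on $\Omega$. In particular $\mu_\Lambda$ is absolutely continuous with respect to the Lebesgue measure, and the Radon--Nikodym theorem delivers a density $g \in L^1_{\text{loc}}(\Omega)$ with $0 \le g(x) \le \max\{F(x),0\}$ for almost every $x \in \Omega$.

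Finally, I would identify $\Lambda$ with $g$. Since $p \ge 2$ under Assumption~\ref{a:mainassumption_regularity}, the embeddings $L^p(\Omega) \hookrightarrow L^2(\Omega) \hookrightarrow \Vss'$ allow us to view $g$ as an element of $\Vss'$; uniqueness of the distributional representation on $C_0^\infty(\Omega)$ forces $\Lambda = g$. The pointwise bound on $g$ then immediately gives
\[
  \|\Lambda\|_{L^p(\Omega)} = \|g\|_{L^p(\Omega)} \le \|\max\{F,0\}\|_{L^p(\Omega)},
\]
which, by Assumption~\ref{a:mainassumption_regularity}, is finite. I do not expect a genuine obstacle in this proof: the non-trivial analytic content is already packaged in Lemma~\ref{l:LS}, and the remaining steps are standard. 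The only place requiring a modicum of care is the passage from the duality pairing on $\Vss'\times\Vss$ to a pointwise almost-everywhere inequality, which is precisely what the Schwartz representation and measure-theoretic comparison above make rigorous.
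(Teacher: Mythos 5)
Your proposal is correct and follows essentially the same route as the paper: nonnegativity of $\Lambda$ from Theorem~\ref{thm:complconds} plus the Riesz--Schwartz representation give a positive Radon measure, and the Lewy--Stampacchia bound of Lemma~\ref{l:LS} forces absolute continuity with a Radon--Nikodym density dominated pointwise by $\max\{F,0\}\in L^p(\Omega)$. Your write-up merely spells out in more detail the measure-comparison and identification steps that the paper's proof states in compressed form.
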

\begin{proof}
Since $\Lambda \in \Vss'$ and $\Lambda \geq 0$ in the sense of distributions (Theorem~\ref{thm:complconds}), it follows from the Riesz-Schwartz theorem (see \cite[Th\'eor\`eme I.4.V]{MR0209834} and \cite[Theorem 1.7.II]{MR2012831}) that $\Lambda$ is a positive Radon measure. The Lewy-Stampacchia estimate of Lemma~\ref{l:LS} then implies that this measure is absolutely continuous with respect to the Lebesgue measure and that its Radon-Nikodym derivative belongs to $L^p(\Omega)$ (thanks to Assumption~\ref{a:mainassumption_regularity}) with the asserted estimate.
\end{proof}

From the above result, we deduce the continuity of the solution and, as a consequence, that the assumptions of Theorem~\ref{thm:complconds} are satisfied.

\begin{theorem}[continuity of $u$]
\label{t:continuity-C}
Suppose that Assumptions~\ref{a:obstacle}, \ref{a:mainassumption_regularity} and \ref{a:regLinear} hold. Assume in addition that, for Case~\ref{case:fracdrift}, we have that $s \in (\tfrac{d+1}6,1)\cap (\tfrac12,1)$. The solution $u \in \Vss$ to the obstacle problem \eqref{eq:VI} has a continuous representative in its class of equivalence.
\end{theorem}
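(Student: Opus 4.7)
The strategy will be to extract regularity of $u$ directly from the pointwise equation
\[
  \bit L u + \bbeta \cdot \nabla u + \Laps \widetilde u = f + \Lambda \qquad \text{in } \Vss'
\]
encoded in the Lagrange multiplier \eqref{eq:defofLambda}. Combining Theorem~\ref{thm:Lambdaregular} with Assumption~\ref{a:mainassumption_regularity}, the right-hand side $f + \Lambda$ lies in $L^p(\Omega)$ with $p > \max\{2, d/(2s)\}$ in Cases~\ref{case:frac}--\ref{case:fracdrift} and $p \ge 2$ in Case~\ref{case:diff}. The plan is to upgrade, case by case, this $L^p$ information into Sobolev or H\"older regularity strong enough to embed into $C(\overline\Omega)$.

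For Case~\ref{case:diff} the argument is immediate. Since $\Vss = \Hunz$, the above equation is an instance of the linear problem \eqref{eq:linearprob} with datum $g = f + \Lambda \in L^2(\Omega)$, so Proposition~\ref{prop:reglinearidiff} (applied by uniqueness to $u$ in place of $\Phi_g$) yields $u \in H^{\mu(s,r)}(\Omega)$. Under Assumption~\ref{a:regLinear} we have $r > \tfrac12$, whence $\mu(s,r) > \tfrac32 \ge d/2$ for $d \le 3$, and the standard Sobolev embedding delivers continuity.

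For Case~\ref{case:frac} the equation reduces to $\Laps \widetilde u = f + \Lambda$ with right-hand side in $L^p(\Omega)$, $p > d/(2s)$. Here the Sobolev route via Proposition~\ref{p:reglinear-B} is not sharp enough because of the boundary cap $(s+\tfrac12)^-$, so I would instead invoke the classical H\"older regularity theory (in the spirit of Ros-Oton--Serra) for the Dirichlet problem of the integral fractional Laplacian with $L^p$ data, which produces $u \in C^\alpha(\Rd)$ for some $\alpha>0$ and in particular a continuous representative on $\overline\Omega$.

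Case~\ref{case:fracdrift} is where the main obstacle lies and motivates the extra hypothesis $s > (d+1)/6$. The plan is to bootstrap. First, Theorem~\ref{thm:regularityVI} gives $u \in \widetilde H^{(s+\frac12)^-}(\Omega)$, so that $\nabla u \in H^{(s-\frac12)^-}(\Omega)$ componentwise. A Sobolev embedding on the smooth domain $\Omega$ then yields $\nabla u \in L^q(\Omega)$ for every $q < 2d/(d-2s+1)$, and the hypothesis $s > (d+1)/6$ is equivalent to $2d/(d-2s+1) > d/(2s)$, so some admissible $q > d/(2s)$ can be chosen. Since $\bbeta \in C^1(\overline\Omega,\Rd)$, we deduce $\bbeta \cdot \nabla u \in L^q(\Omega)$, and the rewritten equation $\Laps \widetilde u = f + \Lambda - \bbeta \cdot \nabla u$ has right-hand side in $L^{\min\{p,q\}}(\Omega)$ with $\min\{p,q\} > d/(2s)$. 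The Case~\ref{case:frac} argument then applies verbatim to conclude continuity. I expect this bootstrap to be the most delicate step of the proof, both in carefully tracking the Sobolev exponents and in matching them with the threshold $d/(2s)$ that governs the $L^p$--to--$C^\alpha$ theory for the fractional Laplacian; the threshold $s > (d+1)/6$ is precisely what is needed for this matching (and is only a true restriction when $d = 3$, since for $d=1,2$ the direct Sobolev embedding of $\widetilde H^{(s+\frac12)^-}(\Omega)$ into $C(\overline\Omega)$ already works under the standing Case~\ref{case:fracdrift} hypothesis $s\ge\tfrac12$).
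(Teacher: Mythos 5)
Your proposal is correct and follows essentially the same route as the paper: Case~\ref{case:diff} by Sobolev embedding of $H^{\mu}(\Omega)$, and Case~\ref{case:fracdrift} by exactly the paper's bootstrap ($\nabla u\in H^{(s-\frac12)^-}(\Omega)\subset L^q(\Omega)$ with $q$ just below $2d/(d+1-2s)$, the threshold $s>(d+1)/6$ giving $q>d/(2s)$, then the $L^p$--to--continuity result of Ros-Oton--Serra). The only cosmetic difference is in Case~\ref{case:frac}, where the paper cites the continuity result of Musina--Nazarov--Sreenadh for the fractional obstacle problem directly from $f\in L^p(\Omega)$, $p>d/(2s)$, rather than routing through $\Lambda$ and the linear $L^p\to C^\alpha$ theory as you do; both are valid and rest on the same mechanism.
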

\begin{proof}
We consider each case separately:
\begin{enumerate}[$\bullet$]
  \item Case~\ref{case:frac}: Since $f \in L^p(\Omega)$ with $p>d/(2s)$ the continuity follows from \cite[Theorem 1.2]{MNS17}.
  
  \item Case~\ref{case:fracdrift}: We have that $u \in \widetilde H^{(s+1/2)^-}(\Omega)$ and thus
  \[
    \bbeta \cdot \nabla u\in H^{(s-\frac 12)^-}(\Omega)\subset L^{q}(\Omega), \qquad q:=\frac{2d}{(d+1-2s+2\epsilon)},
  \]
  for every $\epsilon>0$. 
  From Theorem~\ref{thm:Lambdaregular} we have that $\Lambda \in L^p(\Omega)$ and so
  \[
    \Laps \tilde u = \Lambda +f - \bbeta \cdot \GRAD u \in L^{\min\{p,q\}}(\Omega).
  \]
 We also use the assumption $s>\tfrac{d+1}6$ to deduce that $q>d/(2s)$ provided $\epsilon$ is chosen sufficiently small.
 Therefore, Proposition 1.4 in \cite{MR3216831} guarantees that $u$ is continuous.

  \item Case~\ref{case:diff}: Because $u \in H^\mu(\Omega)$, its continuity directly follows by Sobolev embedding, see Remark~\ref{r:continuity}.
  
\end{enumerate}
This ends the proof.
\end{proof}


\section{Finite element approximation}
\label{sec:FEM}

Having studied problem \eqref{eq:VI}, its properties and the regularity of its solutions, we can now present a discrete counterpart along with its analysis. We begin by assuming without loss of generality that $\overline\Omega$ is contained in the unit ball of $\Rd$. Let $\{ \calT_h(\Omega) \}_{h>0}$ be a family of conforming simplicial triangulations of $\overline\Omega$. We assume that these triangulations are shape-regular and quasi-uniform in the sense of \cite{MR1930132,ERNGuermond} and identify $h$ with the maximal simplex size. 

Over $\calT_h(\Omega)$ we construct $\polV_h$, the space of piecewise affine functions subordinate to $\calT_h(\Omega)$ that vanish on $\partial\Omega$. An instrumental tool for the analysis that we shall perform is the use of $I_h$, the positivity preserving interpolant introduced in \cite[Section 3]{ChenNochetto}. For convenience we recall some of its basic properties and establish a stability estimate for it in fractional Sobolev spaces of order $\beta \in (0,\tfrac32)$. 

\begin{proposition}[properties of $I_h$]
\label{prop:properitesIh}
Let $I_h : L^1(\Omega) \to \polV_h$ be the positivity preserving interpolation operator of \cite{ChenNochetto}. This operator satisfies:
\begin{enumerate}[1.]
  \item \emph{Positivity}: If $w \geq 0$ \mae in $\Omega$, then $I_h w \geq 0$.

  \item \emph{$L^2(\Omega)$--approximation}: If $w \in \Hunz \cap H^\beta(\Omega)$ with $\beta \in [1,2]$, then
  \[
    \| w - I_h w \|_\Ldeux \preceq h^\beta \| w \|_{H^\beta(\Omega)}.
  \]
  
  \item \emph{$\Vss$--approximation}: If $w \in \Hunz \cap H^\beta(\Omega)$ with $\beta \in [1,2]$, then
  \[
    \| w - I_h w \|_{\widetilde H^s(\Omega)} \preceq h^{\beta-s} \| w \|_{H^\beta(\Omega)},  \qquad \textrm{and } \qquad 
     \| w - I_h w \|_{H^1_0(\Omega)} \preceq  h^{\beta-1} \| w \|_{H^\beta(\Omega)}.
  \]

  \item \emph{Stability}: If $w \in \widetilde{H}^\beta(\Omega)$ with $\beta \in (0,\tfrac32)$, then we have
  \[
    \| I_h w \|_{\widetilde{H}^\beta(\Omega)} \preceq \| w \|_{\widetilde{H}^\beta(\Omega)}.
  \]
\end{enumerate}
where, in all estimates, the hidden constants depend only on the shape-regularity of the mesh and the constants in the last two inequalities also depend on the quasi-uniformity.
\end{proposition}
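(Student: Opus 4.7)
The plan is to treat the four properties in sequence, leaning on the construction of $I_h$ from \cite{ChenNochetto} for the first three and reserving the main work for the fractional stability in item~4.

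For item~1 (positivity), item~2 (the $L^2$-approximation estimate), and the $H^1_0$ portion of item~3, I would simply invoke the corresponding statements already established in \cite{ChenNochetto}: the operator $I_h$ is a Scott--Zhang-type interpolant built from positive averaging functionals, which immediately yields positivity, and the quasi-interpolation estimates for $\beta=1,2$ are standard, with the case $\beta\in(1,2)$ obtained via real interpolation of operators applied to $I-I_h$ acting from $H^\beta(\Omega)\cap H^1_0(\Omega)$.

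For the remaining half of item~3, the $\widetilde{H}^s$-approximation estimate, I would apply real interpolation to the pair of bounds
\[
  \|w-I_h w\|_{L^2(\Omega)} \preceq h^\beta\|w\|_{H^\beta(\Omega)}, \qquad \|w-I_h w\|_{H^1_0(\Omega)} \preceq h^{\beta-1}\|w\|_{H^\beta(\Omega)}.
\]
Since $w-I_h w\in H^1_0(\Omega)$, Remark~\ref{r:norm-equivalence} gives $\widetilde H^s(\Omega)=(L^2(\Omega),H^1_0(\Omega))_{s,2}$ (equivalent norms), and the bound $\|w-I_h w\|_{\widetilde H^s(\Omega)}\preceq h^{\beta-s}\|w\|_{H^\beta(\Omega)}$ follows by the standard interpolation inequality.

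For item~4, I would split the range of $\beta$. When $\beta\in(0,1]$, I would interpolate the $L^2$- and $H^1_0$-stability bounds $\|I_h w\|_{L^2(\Omega)}\preceq\|w\|_{L^2(\Omega)}$ and $\|I_h w\|_{H^1_0(\Omega)}\preceq\|w\|_{H^1_0(\Omega)}$ from \cite{ChenNochetto}, together with the identification of $\widetilde H^\beta(\Omega)$ as an interpolation space between $L^2(\Omega)$ and $H^1_0(\Omega)$. The regime $\beta\in(1,\tfrac32)$ is the main obstacle, because $P_1$ functions lie on the edge of the Sobolev scale $H^{3/2-\epsilon}$ and naive inverse estimates are not stable. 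Here I would work with the Gagliardo double-integral semi-norm, split as a sum over pairs $(T,T')\in\mathcal T_h(\Omega)^2$, and treat three regimes separately: (i)~$T=T'$, handled by the local $H^1$-stability of $I_h$ combined with the inverse inequality on the reference element (which is harmless for $\beta<3/2$); (ii)~neighboring elements sharing a vertex, controlled by the local approximation property of $I_h$ applied patch-wise; and (iii)~well-separated pairs, where the singular kernel is bounded and an $L^2$-stability estimate on $I_h$ suffices. Summing the three contributions and re-assembling the double integral on the patch neighborhood of each vertex yields the claim. I expect case~(i), requiring a careful interplay between the scaled local inverse inequality and the pointwise stability of the averaged nodal values, to be the technically delicate step, but the quasi-uniformity hypothesis makes all constants uniform in $h$.
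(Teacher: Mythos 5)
Your treatment of items~1--3 and of item~4 for $\beta\le 1$ matches the paper's in substance: positivity and the two endpoint $L^2$-estimates are taken from \cite{ChenNochetto}, the intermediate $\beta$ and the $\widetilde H^s(\Omega)$-estimate come from real interpolation, and the low-order stability comes from interpolating $L^2$- and $H^1_0$-boundedness (the paper reaches the $H^1_0$-approximation slightly differently, by comparing $I_h$ with the Scott--Zhang operator $S_h$ and using an inverse inequality, but this is a cosmetic difference). Where you genuinely diverge is item~4 for $\beta\in(1,\tfrac32)$. The paper's argument is short: write $\|I_h w\|_{\widetilde H^\beta(\Omega)}\le \|S_h w\|_{\widetilde H^\beta(\Omega)}+\|I_h w-S_h w\|_{\widetilde H^\beta(\Omega)}$, use the fractional stability of $S_h$ from \cite[Lemma 7.6]{BLP17} for the first term, and for the second (a discrete function) apply the inverse inequality $\|v_h\|_{\widetilde H^\beta(\Omega)}\preceq h^{-\beta}\|v_h\|_{\Ldeux}$ on $\polV_h$, which is valid for all $\beta\in[0,\tfrac32)$ because of the discrete norm equivalence $\|v_h\|_{\widetilde H^\beta(\Omega)}\asymp\|v_h\|_{\dot H^\beta_h(\Omega)}$; one then concludes with the $L^2$-approximation estimate. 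This buys a uniform two-line proof over the whole range $\beta\in(0,\tfrac32)$ at the price of importing the norm-equivalence machinery. Your element-pair decomposition would re-derive, by hand, essentially the content of that inverse inequality.

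Two caveats on your item~4 outline. First, for $\beta>1$ the order-$\beta$ Gagliardo double integral of $I_h w$ itself is not a seminorm for $H^\beta(\Omega)$ (it is infinite on any nonconstant function); you must apply the order-$(\beta-1)$ double integral to $\GRAD(I_h w)$. Once you do, your case~(i) ($T=T'$) contributes \emph{nothing}, since $\GRAD(I_h w)$ is constant on each simplex; the genuinely delicate regime is the near-diagonal pairs, where the jump of $\GRAD(I_h w)$ across faces must be summed and convergence hinges precisely on $\beta-1<\tfrac12$. So the step you flag as hardest is the trivial one, and the one you describe as routine is where the work lies. Second, after localizing you must still return to the $\widetilde H^\beta(\Omega)$ norm, which is defined through the zero extension and the Fourier transform; this requires the equivalences of Remark~\ref{r:norm-equivalence} (valid for Lipschitz $\Omega$ and $\beta<\tfrac32$), which your outline uses implicitly but should state. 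With these repairs your route is workable, but it is considerably longer than the paper's.
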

\begin{proof}
The positivity follows from its definition, see \cite{ChenNochetto}.

The $L^2(\Omega)$--approximation property of $I_h$ is derived as follows.
From \cite[Lemma 3.2]{ChenNochetto}, we have that
\[
  \| w - I_h w \|_\Ldeux \preceq h \|\GRAD w\|_\Ldeux, \quad \forall w \in \Hunz,
\]
and that
\[
  \| w - I_h w \|_\Ldeux \preceq h^2 \|D^2w\|_\Ldeux, \quad \forall w \in \Hunz \cap \Hdeux.
\]
Consequently, interpolating these results we obtain that for $\beta \in [1,2]$
\[
  \| w - I_h w \|_\Ldeux \preceq h^\beta \| w\|_{H^\beta(\Omega)}, \quad \forall w \in \Hunz \cap H^\beta(\Omega).
\]

We now discuss the $\Vss$--approximation properties. Since we have already established the $\Ldeux$--approximation property, it suffices to focus on $\Hunz$. This estimate follows from its stability and the $\Ldeux$--approximation property. 
Indeed, let  $S_h:H^1_0(\Omega) \rightarrow \mathbb V_h$ be the the Scott-Zhang operator \cite{MR1011446} and use an inverse inequality to write
\[
  \| \GRAD(w-I_hw)\|_\Ldeux \preceq  \| \GRAD(w - S_h w) \|_\Ldeux + h^{-1} \left( \| w - S_h \|_\Ldeux + \| w - I_h w \|_\Ldeux \right).
\]
The $\Hunz$--approximation property now follows from the approximation properties of $S_h$ in $L^2(\Omega)$ and $H^1_0(\Omega)$ and those of $I_h$ in $L^2(\Omega)$.

To show the, final, stability property we proceed as follows
\begin{align*}
  \| I_h w \|_{\widetilde{H}^\beta(\Omega)} &\leq \| S_h w \|_{\widetilde{H}^\beta(\Omega)} + \| I_h w -S_h w \|_{\widetilde{H}^\beta(\Omega)} \\
    &\preceq \| w \|_{\widetilde{H}^\beta(\Omega)} + h^{-\beta} \| I_h w - S_h w \|_\Ldeux \\
    &\preceq \| w \|_{\widetilde{H}^\beta(\Omega)} + h^{-\beta}\| w - I_h w \|_\Ldeux + h^{-\beta}\| w - S_h w \|_\Ldeux \\
    &\preceq \| w \|_{\widetilde{H}^\beta(\Omega)} + h^{-\beta}\| w - I_h w \|_\Ldeux,
\end{align*}
where we used an inverse inequality between $\widetilde{H}^\beta(\Omega)$ and $\Ldeux$ and the stability and approximation properties on fractional Sobolev spaces of $S_h$ \cite[Lemma 7.6]{BLP17}. 
It remains to invoke the already proven $L^2(\Omega)$--approximation estimate.
Notice that the inverse inequality used above holds thanks to the norm equivalence property
\[
	\|v_h\|_{\widetilde H^{\beta}(\Omega)}\asymp \|v_h\|_{\dot H^{\beta}(\Omega)} 
	\asymp \|v_h\|_{\dot H_h^\beta(\Omega)}, \quad  v_h\in \mathbb V_h, \quad \beta\in [0,\tfrac32),
\]
 discussed in  Remark~\ref{r:norm-equivalence} and in Proposition 3.10 of \cite{XuThesis}; see also \cite{MR3671494}. 
 Here  
\[
	\|v_h\|_{\dot H_h^\beta(\Omega)} 
	:=\left(\sum_{j=1}^{\mathcal M_h} \lambda_{j,h}^\beta |(v_h,\psi_{j,h})|^2\right)^{1/2},
\]
$\mathcal M_h$ denotes the dimension of $\mathbb V_h$ and $\{\lambda_{j,h},\psi_{j,h}\}$ is the set of discrete eigenpairs of the Dirichlet form, \ie
\[
	(\GRAD\psi_{j,h},\GRAD\phi_h)_{L^2(\Omega)} 
	= \lambda_{j,h}(\psi_{j,h},\phi_h)_{L^2(\Omega)}, \quad\forall \phi_h\in \mathbb V_h .
\]
\end{proof}

The Chen-Nochetto interpolant $I_h$ allows us to define the discrete admissible set
\[
  \polK_h := \left\{ w_h \in \polV_h : w_h \geq I_h \chi, \ \mae \text{ in } \Omega \right\};
\]
compare with \eqref{e:K}.
Observe that 
\begin{equation}\label{e:in_set}
w \in \calK \qquad \textrm{implies }\qquad I_h w \in \polK_h.
\end{equation}

\subsection{Numerical approximation of $a_s$}
\label{sub:DunfordTaylorreview}

The nonlocal operator $\Laps$ included in $\mathcal A$ involves the integration of a singular kernel over all of $\Rd$.
For its approximation, we proceed with a discrete bilinear form as originally proposed in \cite{BLP17}.
The main idea behind this approach is the equivalent representation of the bilinear form $a_s$ that was shown in \cite[Theorem 4.1]{BLP17}
\begin{equation}
\label{eq:chvarLaps}
  a_s(v, w) = \frac{2 \sin(\pi s)}\pi \int_0^\infty t^{2-2s} (-\LAP(I-t^2\LAP)^{-1}  \widetilde v,  \widetilde w)_{L^2(\Rd)} \frac{\diff t}t, \quad v,w \in \widetilde H^{s}(\Omega),
\end{equation}
where the operators $\LAP$ and $(I-t^2\LAP)^{-1}$ inside the integrals are acting on functions defined over $\Rd$ so that the inverse is understood in Fourier sense, \ie
\[
  \calF\left( (I-t^2\LAP)^{-1} w \right) = \frac1{1+t^2|\xi|^2} \calF(w).
\]
For $w \in L^2(\Rd)$ let us now denote $\eta_w(t) := -t^2\LAP(I-t^2\LAP)^{-1}w$. The numerical scheme developed in \cite{BLP17} proceeds in three steps:

\begin{enumerate}[1.]
  \item \emph{Sinc quadrature}: We introduce the change of variables $t = e^{-y/2}$ in \eqref{eq:chvarLaps} and apply a truncated equally spaced quadrature. Let $k>0$ and set
  \[
    y_j := jk, \ j \in [-N^-,N^+] \cap \polZ, \quad N^+ := \left\lceil \frac{\pi^2}{2k^2(1-s)} \right\rceil, \quad N^- := \left\lceil \frac{\pi^2}{4sk^2} \right\rceil,
  \]
  to obtain the approximate bilinear form on $\widetilde H^s(\Omega)$
  \begin{equation}
  \label{eq:defofak}
    a_s^k(v, w) := \frac{\sin(\pi s)k}\pi \sum_{j=-N^-}^{N^+} e^{sy_j} ( \eta_v(e^{-y_j/2}), \widetilde w)_{L^2(\Rd)}.
  \end{equation}
  We refer to \cite{MR1171217} for a review of the sinc quadrature and to \cite{bonito2017sinc} for their approximations for these specific integrals. 

  \item \emph{Truncation}: The representation \eqref{eq:defofak} involves the computation of $\eta_v$ via a partial differential equation defined over $\Rd$. We approximate this function by the solution of an associated problem defined on a bounded domain. Let $B$ the unit ball of $\Rd$. Recall that, by assumption $\overline \Omega \subset B$. For a parameter $M$ we define the dilated domains
  \begin{equation}
  \label{eq:defofBM}
    B^M(t) = \begin{dcases}
              \left\{ (1+t(1+M))x : x \in B \right\}, & t \geq 1, \\
              \left\{ (2+M)x : x \in B \right\}, & t< 1.
             \end{dcases}
  \end{equation}
  Upon noticing that, for any $ w\in \Ldeux $, we can equivalently write $\eta_w(t) = \widetilde w -(I-t^2\LAP)^{-1} \widetilde w$, we approximate $\eta_w$ by $\eta_w^M := \widetilde w + \xi_w^M(t)$, where $\xi_w^M(t) \in H^1_0(B^M(t))$ solves 
  \begin{equation}
  \label{eq:defofxiM}
    \int_{B^M(t)} \left( \xi_w^M(t) \phi + t^2 \GRAD \xi_w^M(t)\GRAD \phi \right) \diff x = - \int_\Omega w \phi \diff x, \ \forall \phi \in H^1_0(B^M(t)).
  \end{equation}
  These considerations give rise to the following bilinear form on $\widetilde H^s(\Omega)$:
  \begin{equation}
  \label{eq:defofoakM}
    a^{k,M}_s(v, w) := \frac{\sin(\pi s)k}\pi \sum_{j=-N^-}^{N^+} e^{sy_j} (\eta_v^M(e^{-y_j/2}),\widetilde w)_{L^2(B^M(e^{-y_j/2}))}.
  \end{equation}

  \item \emph{Discretization}: It remains to discretize problem \eqref{eq:defofxiM} in space. For a fixed $t$, we let $\calT_h(t)$ be a conforming shape-regular and quasi-uniform triangulation of $B^M(t)$ made of simplices (possibly curved to match the boundary of $B^M(t)$). We require that $\calT_h(t)$ restricted to $\overline \Omega$ coincides with $\calT_h(\Omega)$. Over $\calT_h(t)$ we define $\polV_h^M(t)$ to be the space of piecewise affine functions subordinate to $\calT_h(t)$, that vanish on $\partial B^M(t)$. Notice that, if $w_h \in \polV_h$, then $\widetilde w_h \in \polV_h^M(t)$. We thus approximate \eqref{eq:defofxiM} by $\xi_{h,w}^M(t) \in \polV_h^M(t)$ that solves
  \begin{equation}
  \label{eq:defofxiMh}
    \int_{B^M(t)} \left( \xi_{h,w}^M(t) \phi_h + t^2 \GRAD \xi_{h,w}^M(t)\GRAD \phi_h \right) \diff x = - \int_\Omega w \phi_h \diff x, \ \forall \phi_h \in \polV_h^M(t).
  \end{equation}
  This gives rise to the fully discrete bilinear form on $\mathbb V_h$
  \begin{equation}
  \label{eq:defoffinala}
    a_{s,h}^{k,M}(v_h,w_h) := \frac{\sin(\pi s)k}\pi \sum_{j=-N^-}^{N^+} e^{sy_j} ( \eta_{h,v_h}^M(e^{-y_j/2}), \widetilde w_h )_{L^2(B^M(t))}
  \end{equation}
  with $\eta_{h,v_h}^M:=\widetilde {v}_h+\xi_{h,v_h}$.
\end{enumerate}

We end this section by recalling properties of the bilinear form $a_{s,h}^{k,M}$ used in the analysis below.
The consistency error incurred in approximating the bilinear form $a_s$ by its fully discrete (and computable) counterpart $a_{s,h}^{k,M}$ is analyzed in \cite{BLP17}: for $\beta\in(s,\tfrac32)$ we have that
\begin{equation}
\label{eq:consistencyerror}
  \sup_{0 \neq v_h, 0 \neq w_h \in \polV_h} \frac{\left| a_s(v_h, w_h) - a_{s,h}^{k,M}(v_h,w_h) \right|}{ \| v_h \|_{\widetilde H^\beta(\Omega)} \| w_h \|_{\tHs}}
    \preceq e^{-c_1/k} + e^{-c_2M} + h^{\beta-s} |\log h| .
\end{equation}
It is also possible to show, see \cite[Theorem 7.2]{BLP17}, that provided the sinc-quadrature spacing $k$ is sufficiently small, the bilinear form $a_{s,h}^{k,M}$ is coercive on $\polV_h \subset \widetilde H^\beta(\Omega)$ for all $\beta \in [0,\tfrac32)$.
More precisely, if $C$ denotes the implicit constant in \eqref{eq:consistencyerror} and we assume that 
\begin{equation}\label{e:quad_mesh}
Ce^{c_1/k}h^{s-1}<1, 
\end{equation}
then we have
\begin{equation}
\label{eq:disccoercive}
  \| w_h \|_{\tHs}^2 \preceq a_{s,h}^{k,M}(w_h,w_h), \quad \forall w_h \in \polV_h,
\end{equation}
where the implicit constant does not depend on $h$.

\subsection{The numerical scheme and its error analysis}
\label{sub:Thescheme}

We are now in position to define a computable discrete bilinear form approximating $\mathcal A$. For $v_h, w_h \in \polV_h \times \polV_h$ we set
\[
  \calA_h(v_h,w_h) := \bit \calL(v_h,w_h) +  \calD(v_h,w_h) + a_{s,h}^{k,M}(v_h,w_h),
\]
where $a_{s,h}^{k,M}$ is the bilinear form defined in \eqref{eq:defoffinala}. 
This bilinear form is continuous. It is also coercive, namely 
\begin{equation}
\label{eq:Ahcoercive}
  \| w_h \|_{\Vss}^2 \preceq \calA_h(w_h,w_h), \quad \forall w_h \in \polV_h,
\end{equation}
with an implicit constant that is independent of $h$, provided the quadrature spacing $k$ satisfies \eqref{e:quad_mesh} for the coercivity \eqref{eq:disccoercive} of $a_{s,h}^{k,M}$ to hold.  

With this notation the discrete obstacle problem reads: find $u_h \in \polK_h$ such that
\begin{equation}
\label{eq:VIh}
  \calA_h(u_h, u_h-v_h) \leq (f, u_h-v_h)_\Ldeux, \quad \forall v_h \in \polK_h.
\end{equation}
Once again, the Lions--Stampacchia theorem ensures the existence and uniqueness of a solution $u_h \in \polK_h$.

The regularity results developed in Section~\ref{sec:regularity} are now brought into play to derive estimates on the error $\| u - u_h \|_{\Vss}$.
Recall that Theorem~\ref{thm:regularityVI} guarantees $u\in H^\sigma(\Omega)$, where $\sigma$ is given by \eqref{eq:defofsigma}.
Therefore, we expect from interpolation theory (Part 3 of Proposition~\ref{prop:properitesIh}) a rate of convergence when measuring the error in the $\Vss$--norm to be
\begin{equation}\label{e:sigmaStar}
\sigma^*:= 
  \sigma^*(\bit,\bbeta,s, r) = \begin{dcases}
                              \min\left\{s,\left(\frac12\right)^-\right\}, & \text{Cases~\ref{case:frac} and \ref{case:fracdrift}} , \\                           
                              \mu(s,r)-1, & \text{Case~\ref{case:diff}},
                            \end{dcases}                   
\end{equation}
where $\mu$ is defined in \eqref{eq:defofmu}.
However, the convergence of the proposed algorithm is restricted by the consistency error discussed above. 
This is the object of the next result. 

\begin{theorem}[rate of convergence]
\label{thm:convergence_rate}
Suppose that Assumptions~\ref{a:obstacle}, \ref{a:mainassumption_regularity} and \ref{a:regLinear} hold. Assume in addition that, for Case~\ref{case:fracdrift}, we have that $s \in (\tfrac{d+1}6,1)\cap (\tfrac12,1)$.
Let $u \in \Vss$ be the solution to \eqref{eq:VI} and $u_h \in\polV_h$ the solution to the discrete counterpart \eqref{eq:VIh}. 
In addition, assume that $k \asymp  |\log(h)|$ and $M = |\log(h)|$ are such that \eqref{e:quad_mesh} holds. In this setting, and with this notation, we have 
\[
  \| u - u_h \|_\Vss \preceq h^{\min\{ \sigma^*,(\frac32-s)^-\}} |\log h| \left( \| f \|_{L^2(\Omega)} + \| \max\{F,0\}\|_{L^2(\Omega)}+\| \chi \|_{H^\sigma(\Omega)}\right) .
\]
\end{theorem}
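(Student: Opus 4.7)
The approach is a Strang-type argument that handles both the variational-inequality structure and the consistency error \eqref{eq:consistencyerror} arising from the replacement of $a_s$ by $a_{s,h}^{k,M}$. Two observations underpin it. First, the positivity of the Chen--Nochetto interpolant together with \eqref{e:in_set} makes $I_h u \in \polK_h$ an admissible test in \eqref{eq:VIh}. Second, since Theorem~\ref{thm:Lambdaregular} provides $\Lambda \in L^p(\Omega) \subset L^2(\Omega)$, the defining relation \eqref{eq:defofLambda} reads $\calA(u,v) = (f,v)_{L^2(\Omega)} + (\Lambda,v)_{L^2(\Omega)}$ for every $v \in \Vss$.

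Starting from $\|u-u_h\|_{\Vss}^2 \preceq \calA(u-u_h,u-u_h)$ by coercivity of $\calA$ on $\Vss$, I would decompose
\[
  \calA(u-u_h,u-u_h) = \calA(u,u-u_h) - \calA(u_h, u - I_h u) - \calA(u_h, I_h u - u_h),
\]
insert $\pm\calA_h(u_h, I_h u - u_h)$ in the last summand, invoke \eqref{eq:VIh} with $v_h = I_h u$ to absorb $\calA_h(u_h, I_h u - u_h)$ against $(f, I_h u - u_h)_{L^2(\Omega)}$, and substitute the multiplier identity in $\calA(u, u - u_h)$ as well as in $\calA(u_h, u - I_h u) = \calA(u, u - I_h u) - \calA(u - u_h, u - I_h u)$. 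After the $f$-contributions cancel, this yields the Strang-type bound
\begin{equation}\label{eq:strangPlan}
  \|u-u_h\|_{\Vss}^2 \preceq (\Lambda, I_h u - u_h)_{L^2(\Omega)} + \calA(u-u_h, u - I_h u) - (\calA - \calA_h)(u_h, I_h u - u_h).
\end{equation}

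I would then bound each right-hand side term of \eqref{eq:strangPlan}. For the multiplier term, Theorem~\ref{t:continuity-C} produces a continuous representative of $u$, so Theorem~\ref{thm:complconds} gives $\Lambda(u-\chi) = 0$ almost everywhere; combining this identity with $\Lambda \geq 0$ and $u_h \geq I_h\chi$ leads to
\[
  (\Lambda, I_h u - u_h)_{L^2(\Omega)} \leq (\Lambda, I_h u - u)_{L^2(\Omega)} + (\Lambda, \chi - I_h\chi)_{L^2(\Omega)} \preceq h^\sigma\|\Lambda\|_{L^2(\Omega)}\bigl(\|u\|_{H^\sigma(\Omega)} + \|\chi\|_{H^\sigma(\Omega)}\bigr),
\]
where the $L^2$-approximation rate of $I_h$ is taken from Proposition~\ref{prop:properitesIh} (extended below $\sigma = 1$ by interpolating between its $L^2$ and $\Hunz$ bounds). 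The continuity term is handled by Young's inequality and absorption, contributing $\|u - I_h u\|_{\Vss}^2 \preceq h^{2\sigma^*}\|u\|_{H^\sigma(\Omega)}^2$ via part~3 of Proposition~\ref{prop:properitesIh} with $\sigma^*$ given by \eqref{e:sigmaStar}.

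The main obstacle is the consistency term, since \eqref{eq:consistencyerror} requires control of its \emph{first} argument in $\widetilde H^\beta(\Omega)$ for some $\beta \in (s, \tfrac32)$, whereas $u_h$ is only a priori bounded in $\widetilde H^s(\Omega)$. To overcome this I would write $u_h = I_h u - z_h$ with $z_h := I_h u - u_h \in \polV_h$, so that
\[
  (a_s - a_{s,h}^{k,M})(u_h, z_h) = (a_s - a_{s,h}^{k,M})(I_h u, z_h) - (a_s - a_{s,h}^{k,M})(z_h, z_h).
\]
For the first piece I would select $\beta := \min\{\sigma,(\tfrac32)^-\}$, use the $\widetilde H^\beta$-stability of $I_h$ (part~4 of Proposition~\ref{prop:properitesIh}), and exploit the parameter choices in the statement (which make the exponential terms $e^{-c_1/k}+e^{-c_2 M}$ in \eqref{eq:consistencyerror} subordinate to $h^{\beta-s}|\log h|$, consistently with \eqref{e:quad_mesh}) to arrive at a bound of order $h^{\min\{\sigma^*,(\tfrac32-s)^-\}}|\log h|\,\|u\|_{H^\sigma(\Omega)}\|z_h\|_{\widetilde H^s(\Omega)}$ once the case distinctions defining $\sigma^*$ are combined with the interpolation contribution. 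The second piece is controlled via the refinement of \eqref{eq:consistencyerror} underlying the discrete coercivity \eqref{eq:disccoercive} from \cite[Theorem 7.2]{BLP17}, which delivers $|(a_s-a_{s,h}^{k,M})(z_h, z_h)| \leq \varepsilon(h)\|z_h\|_{\widetilde H^s(\Omega)}^2$ with $\varepsilon(h) \to 0$. Writing $\|z_h\|_{\widetilde H^s(\Omega)} \leq \|u-u_h\|_{\Vss} + \|u - I_h u\|_{\Vss}$ and applying Young's inequality allows both quadratic contributions in $\|u-u_h\|_{\Vss}$ to be absorbed into the left-hand side of \eqref{eq:strangPlan}; taking square roots and collecting the data dependencies through Theorems~\ref{thm:regularityVI} and \ref{thm:Lambdaregular} yields the announced bound.
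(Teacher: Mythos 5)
Your overall strategy (a Strang-type splitting, complementarity to reduce the multiplier term to an interpolation error, Young's inequality and absorption) matches the paper's, and your treatment of the multiplier term and of $\calA(u-u_h,u-I_h u)$ is correct. The problem is the consistency term. Because you start from the coercivity of $\calA$ applied to $u-u_h$ rather than the coercivity of $\calA_h$ applied to $I_h u-u_h$, the consistency error appears as $(\calA-\calA_h)(u_h,\cdot)$ with the discrete solution in the first slot, and your fix $u_h=I_h u-z_h$ produces the diagonal term $(a_s-a_{s,h}^{k,M})(z_h,z_h)$. Your claim that this term is bounded by $\varepsilon(h)\|z_h\|_{\tHs}^2$ with $\varepsilon(h)\to 0$ is not justified and does not follow from \eqref{eq:consistencyerror}: taking $v_h=w_h=z_h$ there and using the inverse inequality $\|z_h\|_{\widetilde H^\beta(\Omega)}\preceq h^{s-\beta}\|z_h\|_{\tHs}$ leaves the non-decaying factor $h^{\beta-s}|\log h|\cdot h^{s-\beta}=|\log h|$. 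The discrete coercivity \eqref{eq:disccoercive} only yields $(a_s-a_{s,h}^{k,M})(z_h,z_h)\leq a_s(z_h,z_h)-c\,\|z_h\|_{\tHs}^2\preceq \|z_h\|_{\tHs}^2$ with a constant that is bounded but not small; since $\|z_h\|_{\tHs}$ is comparable to $\|u-u_h\|_{\Vss}$ up to interpolation errors, this quadratic contribution cannot be absorbed into the left-hand side. This is a genuine gap.

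The paper sidesteps the issue at the very first step: it bounds $\|v_h-u_h\|_{\Vss}^2\preceq\calA_h(v_h-u_h,v_h-u_h)$ with $v_h=I_h u\in\polK_h$ using the \emph{discrete} coercivity \eqref{eq:Ahcoercive}, adds and subtracts $\calA(v_h,u_h-v_h)$ and $\calA(u,u_h-v_h)$, and invokes \eqref{eq:VIh}; the consistency term then comes out as $(a_s-a_{s,h}^{k,M})(I_h u,u_h-I_h u)$, with $I_h u$ in the first argument, which is controlled in $\widetilde H^{\beta}(\Omega)$, $\beta=\min\{\sigma,(\tfrac32)^-\}$, by the stability of $I_h$ (part 4 of Proposition~\ref{prop:properitesIh}); no diagonal term ever appears. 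Rearranging your algebra in this way --- everything else in your argument can be kept --- repairs the proof.
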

\begin{proof}
We proceed in several steps.

\noindent \boxed{1} Let $v_h \in \polK_h$. The discrete coercivity \eqref{eq:Ahcoercive}, the continuity of $\mathcal A(.,.)$ and the discrete obstacle system  \eqref{eq:VIh}  satisfied by $u_h$ yield
\begin{align*}
  \| v_h-u_h \|_\Vss^2 &\preceq \calA_h( v_h - u_h, v_h - u_h) 
    \preceq (\calA - \calA_h)(v_h, u_h - v_h)  + \calA(u - v_h,u_h - v_h) \\
    &\quad + \calA_h(u_h, u_h - v_h) - \calA(u, u_h - v_h ) \\
    &\preceq (\calA - \calA_h)(v_h, u_h - v_h) + \|u - v_h\|_\Vss \|u_h - v_h\|_\Vss \\
    &\quad + (f, u_h - v_h)_\Ldeux - \calA(u, u_h - v_h ).
\end{align*}
Incorporating the definition \eqref{eq:defofLambda} of the Lagrange multiplier $\Lambda$ as well as the definition of  the forms $\calA$ and $\calA_h$, we arrive at
\begin{equation*}
  \| u - u_h \|_\Vss^2 \preceq \| u - v_h \|_\Vss^2 + (a_s - a_{s,h}^{k,M})(v_h , u_h - v_h)  + \scl \Lambda, v_h - u_h \scr_{\Vss',\Vss},
\end{equation*}
for every $v_h \in \polK_h$.
We fix $v_h = I_h u$ and invoke the interpolation properties of $I_h$ obtained in Proposition~\ref{prop:properitesIh}, in conjunction with the regularity estimates $u \in H^\sigma(\Omega)$ of Theorem~\ref{thm:regularityVI}, to deduce that
\begin{equation}\label{i:error-split}
  \| u - u_h \|_\Vss^2 \preceq h^{2\sigma^*}\| u \|^2_{H^{\sigma}(\Omega)} + (a_s - a_{s,h}^{k,M})(I_h u, u_h - I_h u)  + \scl \Lambda, I_h u - u_h \scr_{\Vss',\Vss},
\end{equation}
where $\sigma^*$ is given by \eqref{e:sigmaStar}.

\noindent \boxed{2} We now estimate the second term on the right and side of \eqref{i:error-split}.
It directly relates to the consistency error \eqref{eq:consistencyerror} and satisfies for  $k \asymp  |\log(h)|$, $M \asymp |\log(h)|$  and $\beta=\min\{\sigma,\left(\frac32\right)^-\}$
$$
  (a_s - a_{s,h}^{k,M})(I_hu, u_h - I_hu) \preceq \\
  h^{\min\{\sigma,\left(\frac32\right)^-\}-s}|\log h| \| I_h u \|_{\widetilde H^{\min\{\sigma,\left(\frac32\right)^-\}}(\Omega)} \| u_h -I_h u \|_{\tHs}.
$$
Since Proposition~\ref{prop:properitesIh} gives us stability and interpolation error estimates for $I_h$, and Remark~\ref{r:norm-equivalence} gives a norm equivalence property, we are able to obtain that
\begin{align*}
  (a_s - a_{s,h}^{k,M})(I_hu, u_h - I_hu) \preceq & (1+\frac 1 \epsilon) h^{2(\min\{\sigma^*,(\frac32-s)^-\}) }|\log h|^2\| u \|_{H^{\sigma}(\Omega)}^2 \\
  & + \epsilon \| u - u_h \|_{\widetilde H^s(\Omega)}^2,
\end{align*}
for every $\epsilon>0$. Notice that we used the relation $\sigma^* \leq \sigma-s$.
Returning to \eqref{i:error-split} we obtain
\begin{equation}\label{i:error-split_2}
  \| u - u_h \|_\Vss^2  \preceq  h^{2(\min\{\sigma^*,(\frac32-s)^-\}) }|\log h|^2 \| u \|_{H^\sigma(\Omega)}^2   + \scl \Lambda, I_h u - u_h \scr_{\Vss',\Vss}.
\end{equation}

\noindent \boxed{3} It remains to bound last term on the right hand side of \eqref{i:error-split_2} involving the Lagrange multiplier $\Lambda$. We notice, first of all, that owing to Theorem~\ref{thm:Lambdaregular}, we can replace the duality pairing here with an $\Ldeux$--inner product. Thus, we write
\begin{align*}
  \scl \Lambda, I_h u - u_h \scr_{\Vss',\Vss} &= (\Lambda, I_h u - u_h)_\Ldeux \\
  &= \left( \Lambda, I_h(u-\chi) - (u-\chi) \right)_{\Ldeux} + \left( \Lambda, u-\chi \right)_{\Ldeux} \\ 
  &\quad + \left( \Lambda, I_h \chi - u_h \right)_{\Ldeux}.
\end{align*}
In addition, from Theorem~\ref{t:continuity-C} we conclude that Theorem~\ref{thm:complconds} holds, and so we have that the compatibility conditions are satisfied. This implies that
\[
  \left(\Lambda, u - \chi \right)_\Ldeux = 0
\]
and that $\Lambda \geq 0$ a.e.~in $\Omega$. In addition, since $u_h \in \polK_h$ implies $I_h\chi - u_h \leq 0$, this leads to
\[
  \left( \Lambda, I_h \chi - u_h \right)_{\Ldeux} \leq 0.
\]
Gathering the above three relations we deduce that
\[
  \left( \Lambda, I_h u - u_h \right)_{\Ldeux} \leq \left( \Lambda, I_h(u-\chi) - (u-\chi) \right)_{\Ldeux}.
\]
To conclude, we once again invoke the interpolation estimates to write
\begin{align*}
  \left( \Lambda, I_h u - u_h \right)_{\Ldeux} & \leq  \| \Lambda \|_{L^2(\Omega)} \| I_h(u-\chi) - (u-\chi)\|_{L^2(\Omega)} \\
 & \preceq h^{\sigma} \left( \| u \|_{H^\sigma(\Omega)} + \| \chi \|_{H^\sigma(\Omega)} \right) \| \Lambda \|_{L^2(\Omega)}.
\end{align*}

\noindent \boxed{4} Since $\sigma^* \leq \sigma/2$, substituting the previous inequality in \eqref{i:error-split_2} yields 
\[
 \| u - u_h \|_\Vss  \preceq  h^{\min\{\sigma^*,(\frac32-s)^-\}}|\log h| \left(\| u \|_{H^\sigma(\Omega)}+\| \chi \|_{H^\sigma(\Omega)} + \| \max\{F,0\} \|_{L^2(\Omega)}\right).
\]
It remains to use  the regularity estimate of Theorem~\ref{thm:regularityVI} and Theorem~\ref{thm:Lambdaregular} to express the right hand side of this estimate in terms of the data. This concludes the proof.
\end{proof}


\section{Numerical illustrations}
\label{sec:numex}

In this section we carry out a series of numerical examples that illustrate and go beyond our theory. 

\subsection{Numerical Implementation}
We implement the numerical algorithm using the \texttt{deal.II} finite element library \cite{dealII90}. For our one dimensional examples we use continuous piecewise linear finite elements subordinate to a uniform subdivision in $\Omega$. In two dimensions, we use bilinear quadrilateral elements subordinate to a regular (in the sense of \cite{MR1930132}) subdivision in $\Omega$. 

\subsubsection{Mesh generation}
We recall that we assume (without loss of generality) that the domain $\Omega$ is a subset of the unit ball $B$. 
We start with a quasi-uniform subdivision $\mathcal T_h$ of $B$ matching $\partial \Omega$ and where $h$ denotes the largest diameter among all the elements in $\mathcal T_h$.
Motivated by the exponential decay of the solution to the elliptic problem \eqref{eq:defofxiM} in the larger ball $B^M(t)$  \cite[Lemma 2.1]{MR1933726}, 
an exponentially graded extension  to $B^M(t)$ of  the subdivision $\mathcal T_h$ is advocated as  in  \cite[Section 8.2]{BLP17}.
Notice that such subdivisions violate the shape-regularity and quasiuniformity conditions required in step 3 of Section~\ref{sub:DunfordTaylorreview}.
However, the advantage of such non-uniform partitions is to keep the dimension of $\mathbb V^M(t)$ approximatively constant in $t$. 

\subsubsection{The discrete problem}
Let $\mathcal M_{h,t}$ be the dimension of $\mathbb V^M_h(t)$ and recall that $\mathcal M_h$ is the dimension of $\mathbb V_h$. Let $\utilde\Psi$ and $\utilde F\in \Real^{\mathcal M_h}$ be the coefficient vectors of $I_h\chi$ and the $L^2(\Omega)$ projection of $f$ onto $\mathbb V_h$, respectively. We want to find the discrete solution $\utilde {U}\in \Real^{\mathcal M_h}$ and the discrete Lagrange multiplier $\utilde \Lambda \in \Real^{\mathcal M_h}$ satisfying
\[
\begin{aligned}
	&\utilde S \utilde U + \utilde \Lambda =\utilde F,\\
	&\utilde U_i \ge \utilde\Psi_i, \quad \utilde\Lambda_i\ge 0,\quad\text{and}\quad \utilde\Lambda_i(\utilde U_i-\utilde\Psi_i) = 0,
	\quad \text{for } i=1,2,\ldots,\mathcal M_h .
\end{aligned}
\]
Here $\utilde S$ is the system matrix corresponding to the bilinear form $\calA_h$ and is given by
\[
	\utilde S = \sigma\utilde A_0 + \utilde A_{\bbeta}
	+\frac{\sin{(\pi s)k}}{\pi}\utilde M_0 \utilde R \sum_{i=-N^-}^{N^+}e^{s y_i}(e^{y_i}\utilde M_i+\utilde A_i)^{-1} \utilde A_i \utilde E ,
\]
where 
\begin{enumerate}[$\bullet$]
	\item $\utilde A_0, \utilde M_0, \utilde A_{\bbeta}\in \Real^{\mathcal M_h\times \mathcal M_h}$ are the stiffness, mass and advection matrices in the finite element space $\mathbb V_h$;
	\item $\utilde A_i, \utilde M_i \in \Real^{\mathcal M_{h,t}\times \mathcal M_{h,t}}$ are stiffness and mass matrices in the finite element space $\mathbb V_h^M(t)$;
	\item $\utilde E:\Real^{\mathcal M_h} \rightarrow \Real^{\mathcal M_{h,t}}$ is the zero extension operator and $\utilde R: \Real^{\mathcal M_{h,t}} \rightarrow \Real^{\mathcal M_h}$ is the  restriction operator.
\end{enumerate}

The above discrete problem is solved with the primal-dual active set method \cite[Section 5.3]{MR3309171} briefly recalled now. Let $(\utilde U^0, \utilde \Lambda^0)\in \Real^{\mathcal M_h}\times \Real^{\mathcal M_h}$ and $\rho$ be a positive constant.
Compute iteratively $(\utilde U^{k+1}, \utilde \Lambda^{k+1})$, $k\geq 0$, as the solution to 
\begin{equation}\label{e:discrete-system}
	\left( 
	\begin{aligned}
	&\utilde S & (\utilde I^k)^{\Tr} \\
	&\utilde I^k & 0
	\end{aligned}
	\right)
	\left(
	\begin{aligned}
	& \utilde U^{k+1}\\
	& \utilde \Lambda^{k+1}
	\end{aligned}
	\right)
	=
	\left(
	\begin{aligned}
	& \utilde F\\
	& \utilde I^k \utilde\Psi
	\end{aligned}
	\right),
\end{equation}
where  $\utilde I^k \in \mathbb R^{|\mathscr A^k|\times  \mathcal M_h}$ is defined by
\[
	(\utilde I^k)_{ij}=
	\begin{dcases}
	1,&\text{if }j=\mathscr A^{k}_i ,\\
	0,&\text{otherwise}.
	\end{dcases}
\]
and $\mathscr A^{k}$ is the vector of ordered current  active set of indices given by
\[
	\mathscr A^{k}_i := \argmin_{\substack{ \utilde\Lambda_j+\rho(\utilde U^{k}-\utilde \Psi)_j < 0  \\ 
	\mathscr A^k_l \not = j, ~ l<j }} j.
\]

Given a tolerance $\epsilon_{\text{stop}}$, we stop the iteration process when  $\|\utilde U^{k+1}-\utilde U^k\|_{h,\bit}<\epsilon_{\text{stop}}$, where for $w_h\in\mathbb V_h$,
\[
	\|w_h\|_{h,\bit}:=\left(a^{k,M}_{s,h}(w_h,w_h)+\bit\|\nabla w_h\|_\Ldeux^2\right)^{1/2} .
\]

The discrete system \eqref{e:discrete-system} is solved using a Schur complement method, \ie we determine $\utilde \Lambda^{k+1}$ via
\begin{equation}\label{e:lambda-system}
	[\utilde I^k \utilde S^{-1} (\utilde I^k)^{\Tr} ]\Lambda^{k+1} = \utilde I^k (\utilde S^{-1} F - \utilde\Psi) 
\end{equation}
and then we compute $\utilde U^{k+1}$ from
\begin{equation}\label{e:u_system}
	\utilde U^{k+1} = \utilde S^{-1} [\utilde F - (\utilde I^k)^{\Tr} \utilde\Lambda^{k+1}] .
\end{equation}

The evaluation of $\utilde S^{-1}$ in \eqref{e:lambda-system} and \eqref{e:u_system} is approximated using a preconditioned conjugate gradient (when $\bbeta \equiv \boldsymbol0$) or  BI-CGSTAB (when $\bbeta \not = \boldsymbol0)$.
Depending on the value of $\bit$, different preconditioners are applied. 
When $\bit=0$ (Cases~\ref{case:frac} and \ref{case:fracdrift}), the bilinear form $\calA(\cdot,\cdot)$ is equivalent to the $\tHs$ norm squared and we use the inverse of the discrete spectral fractional Laplacian; see \cite{MR3356020} and \cite[Section 8.2]{BLP17} for details. 
Otherwise, when $\bit = 1$ or Case~\ref{case:diff}, we use the multilevel preconditioner introduced in \cite{MR1651742}: Let $j$ be the mesh level and $\phi_i$ for $i=1,\ldots, \mathcal M_{h_j}$ be the nodal basis for $\mathbb V_{h_j}$. We define a sequence of approximation operators $\widetilde Q_j: L^2(\Omega)\to \mathbb V_{h_j}$ by
\[
	\widetilde Q_j w := \sum_{i=1}^{\mathcal M_{h_j}} \frac{(w,\phi_i)_{L^2(\Omega)}}{(1,\phi_i)_{L^2(\Omega)}} \phi_i .
\]
If $J$ denotes the finest mesh level, the preconditioner is given by 
\[
	B_J := \sum_{j=1}^{J-1} (\bar{A} h_j^{-2}+ h_j^{-2s})^{-1} (\widetilde Q_{j+1}-\widetilde Q_j)^2 ,
\]
where $\bar A$ is a constant related to the magnitude of the diffusion coefficient matrix $A$.

System \eqref{e:lambda-system} is solved, again, with an iterative scheme. We use conjugate gradients ($\bbeta \equiv \boldsymbol 0$) or  BI-CGSTAB ($\bbeta \not = \boldsymbol0)$, but this time without preconditioner. 

\subsection{One dimensional convergence tests}
\label{sub:convergence}
Set $\Omega = (-1,1)$, $\chi(x) = 3-6x^2$ and $f(x) = 1$ and the bilinear form $\calL(\cdot,\cdot)$ to be the Dirichlet form \eqref{e:dirichlet-form}. 
The initial subdivision consists of two elements of equal sizes so that $h_0 = \frac 1 2$ and $h_j=h_0/2^j$, $j=1,2,...$.
In addition, for Cases~\ref{case:fracdrift} and \ref{case:diff} we will set $\bbeta = \tfrac12$.


The computation of $a_{s,h}^{k,M}(\cdot,\cdot)$ is carried out with a spacing $k = 0.2$ and truncation parameter $M = 5$ so that the finite element approximation dominates the total error.

\begin{figure}[ht!]
\label{fig:convergence}
  \begin{center}
    \begin{tabular}{ccc}
      \!\!\!\!\includegraphics[scale=.236]{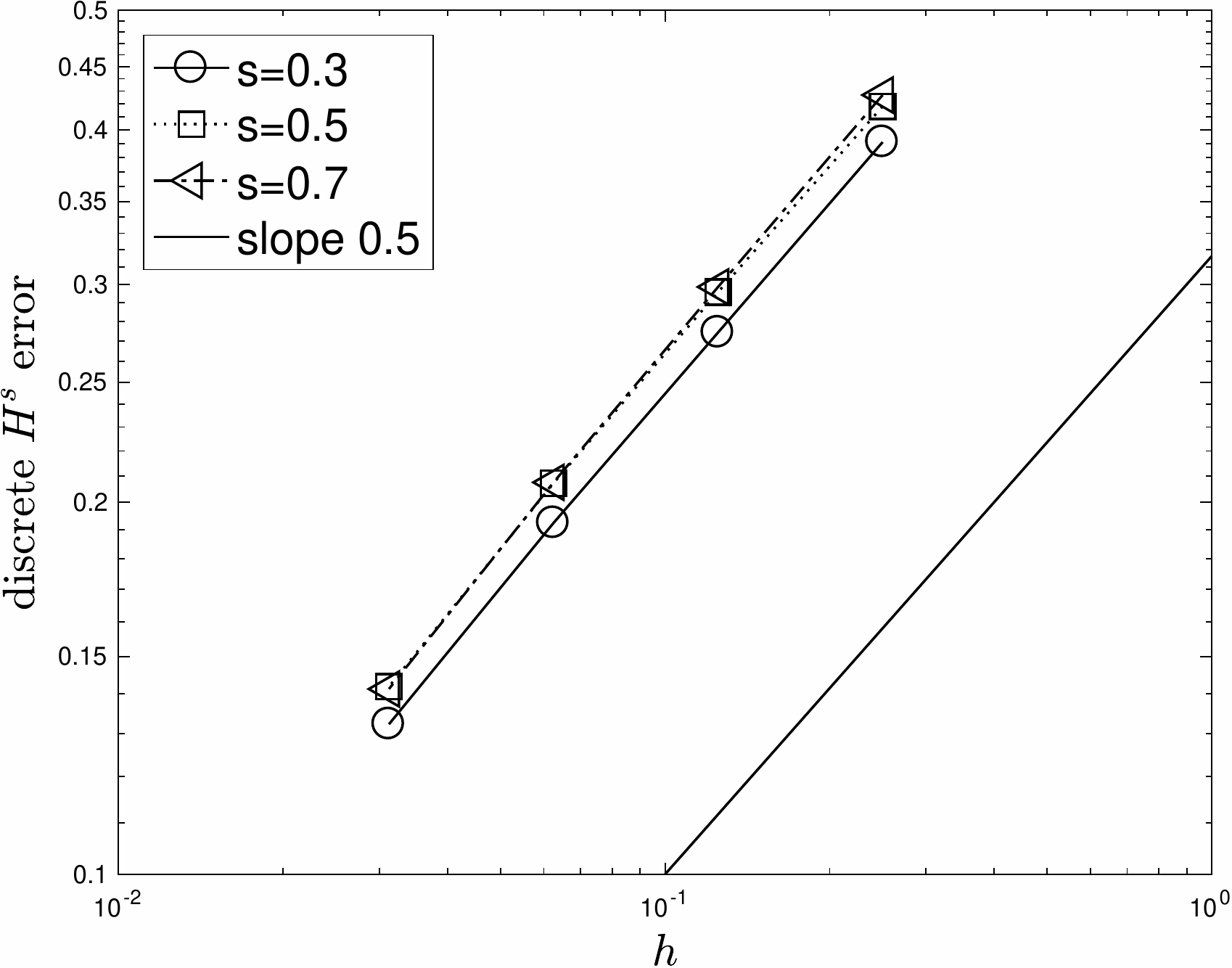}&   \!\!\includegraphics[scale=.236]{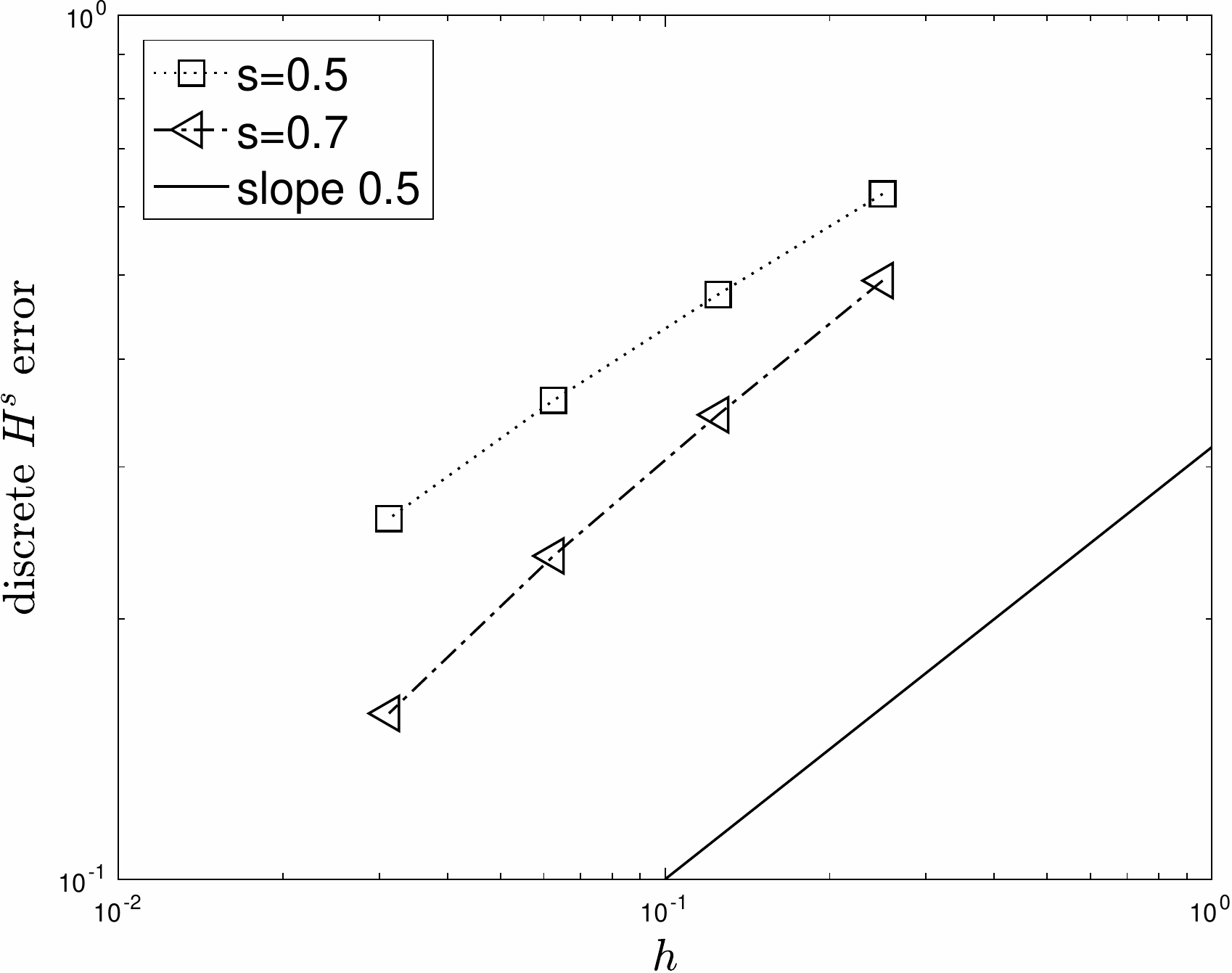} & \!\!\includegraphics[scale=.236]{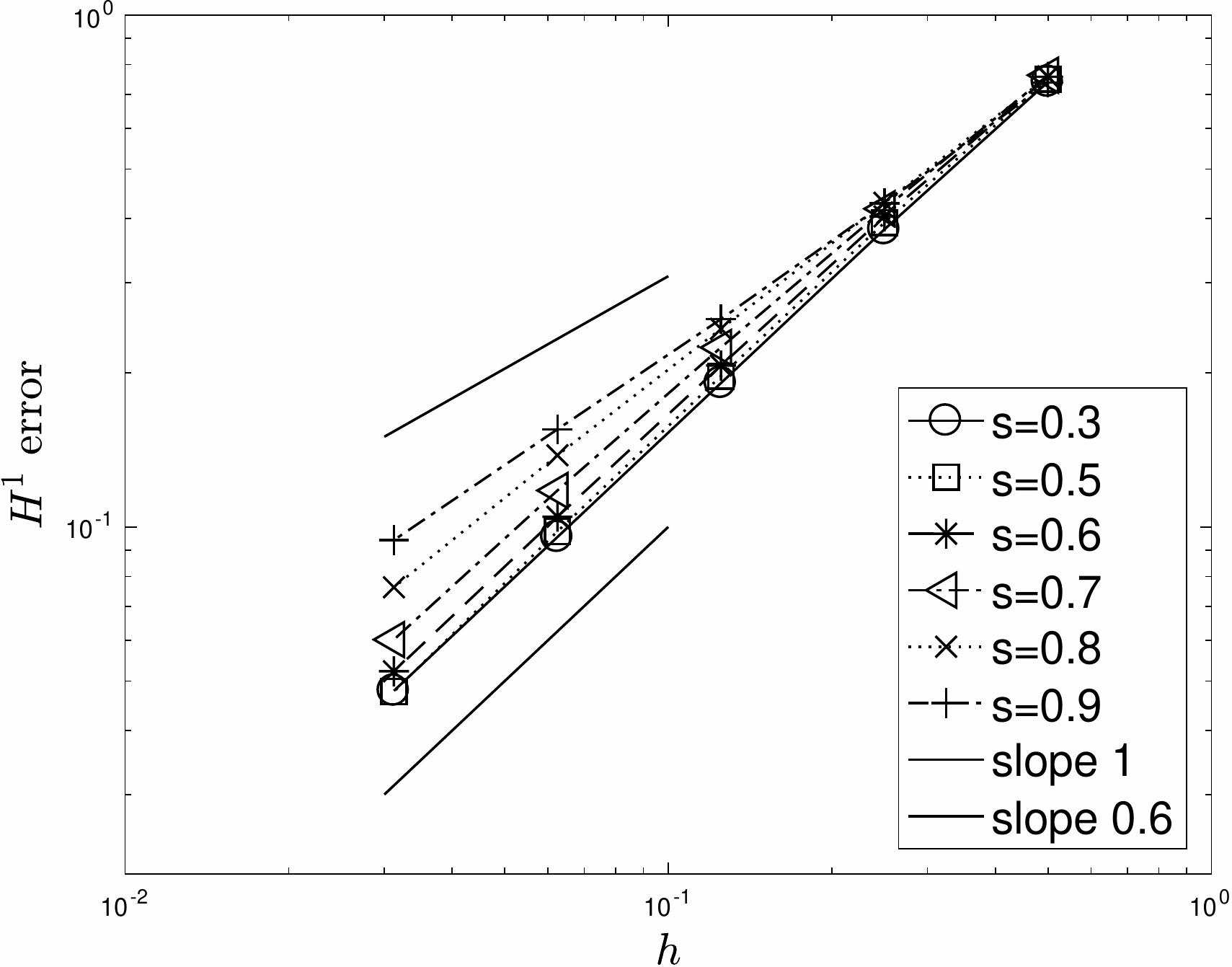}
    \end{tabular}
  \end{center}
\caption{Decay rate of the discrete energy error for the finite element approximation to problem \eqref{eq:VI}. Case~\ref{case:frac} (left), Case~\ref{case:fracdrift} (middle), Case~\ref{case:diff} (right). Note that the case $s=0.5$ for Case ~\ref{case:fracdrift} is not included in the theory developed here.}
\end{figure}

Since the exact solution it is not known to us, as a measure of the error we compute, for $j = 1, \ldots, 4$, the discrete energy error
\begin{equation}\label{e:measure-error}
  e_j:= \| u_{h_j} - u_{\text{ref}} \|_{h,\bit},
\end{equation}
where $u_{\text{ref}}$ is finite element approximation over a very refined mesh. In this case, we set $u_{\text{ref}}=u_{h_9}$. Figure~\ref{fig:convergence} illustrates the decay rate in all the situations and for different values of $s$. 
In the pure fractional diffusion case (left), the observed rates $\mathcal O(h^{1/2})$ matches the prediction of Theorem~\ref{thm:convergence_rate} when $s\ge\tfrac12$. However, this rate is observed as well for $s=0.3$ although Theorem~\ref{thm:convergence_rate} only guarantees $\mathcal O(h^{0.3})$. 
In the case of fractional diffusion with drift (middle), the observed rate of convergence is approximately  $\calO(h^{1/2})$ for $s=0.5,0.7$ as predicted by Theorem~\ref{thm:convergence_rate}. The observed rates for the integro--differential case (right) are in accordance with Theorem~\ref{thm:convergence_rate}. 

\begin{figure}[ht!]
\label{fig:qualitative}
  \begin{center}
    \begin{tabular}{ccc}
      \includegraphics[scale=.09]{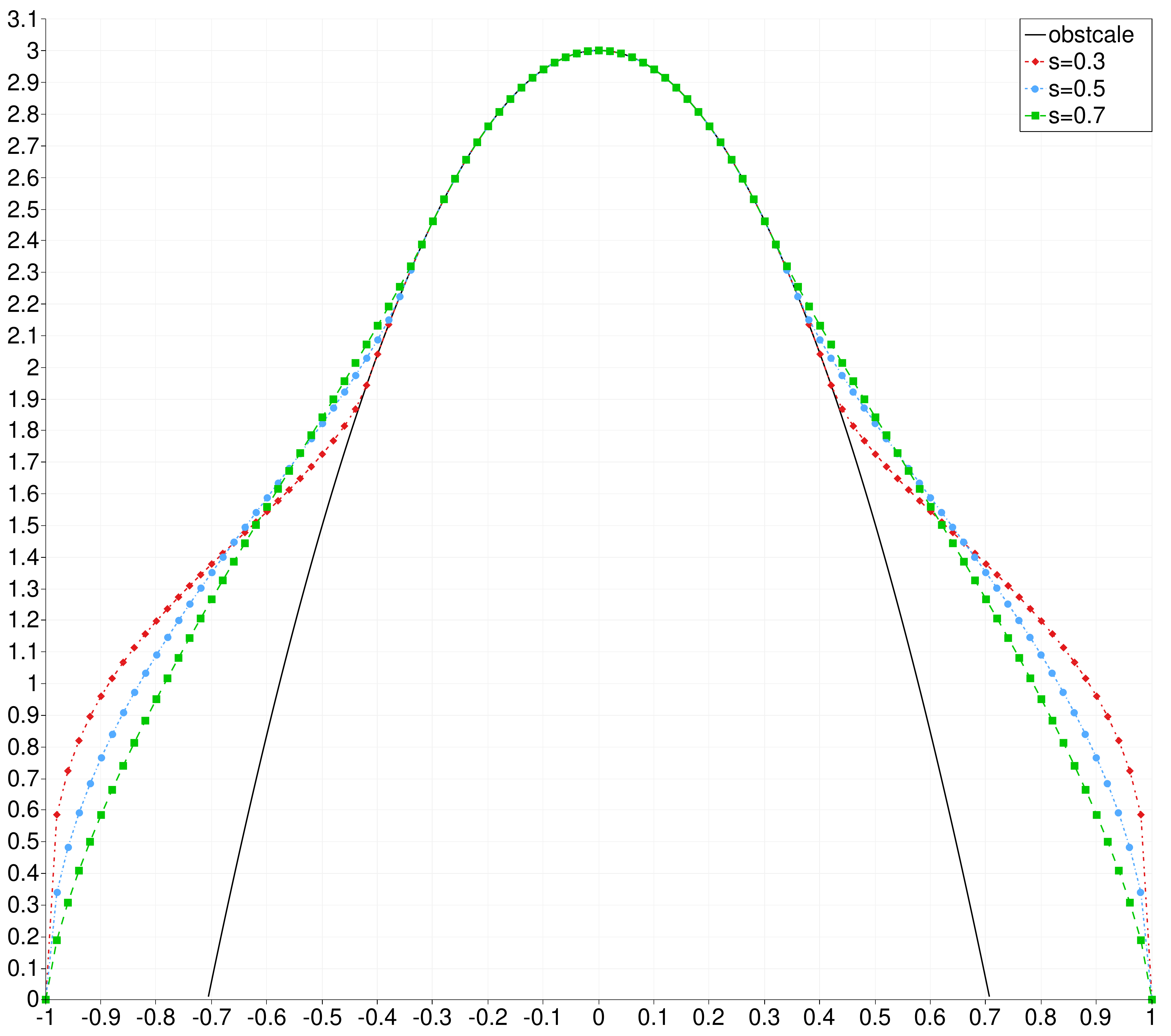}&   \includegraphics[scale=.09]{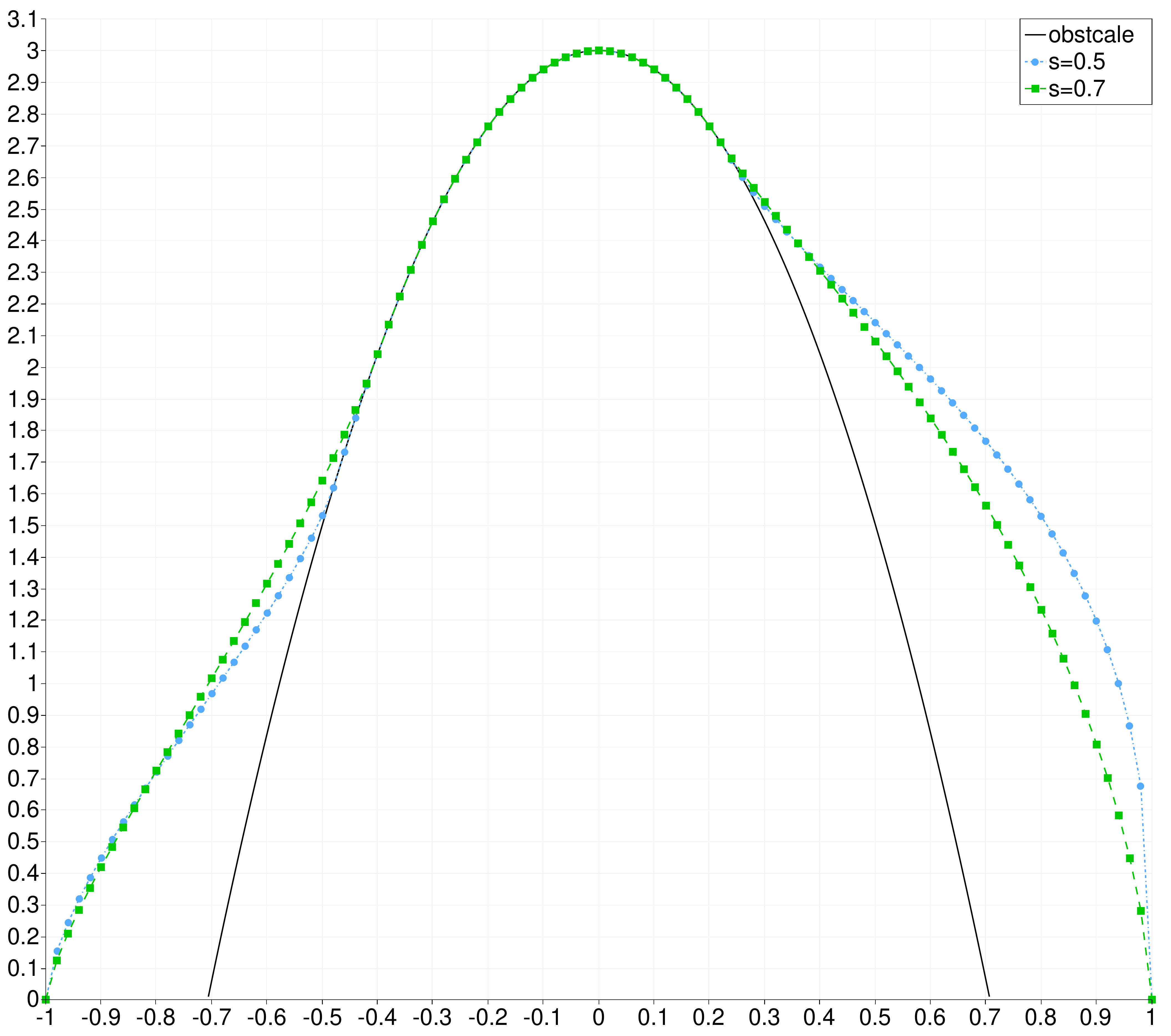} & \includegraphics[scale=.09]{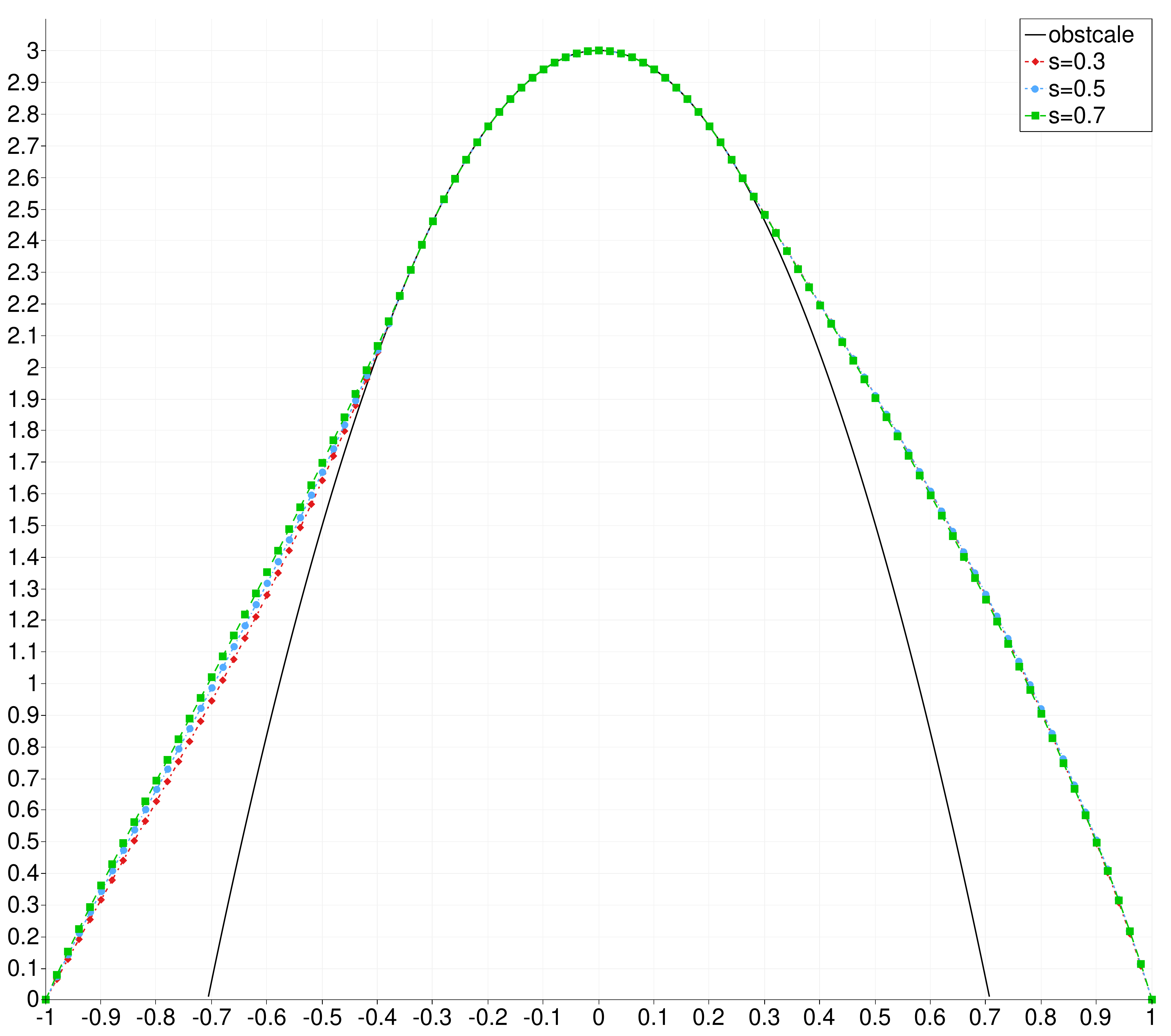}
    \end{tabular}
  \end{center}
\caption{Finite element approximations to \eqref{eq:VI} for Case~\ref{case:frac} (left), Case~\ref{case:fracdrift} (middle), and Case~\ref{case:diff} (right). In each figure, the obstacle is depicted in black (negative part not depicted), the approximate solutions for $s=0.3$ is in red, for $s=0.5$ in blue and for $s=0.7$ in green. Notice that we do not report the case $s=0.3$ when there is a drift, since it falls outside the scope of this work, see Proposition~\ref{prop:drift}.  We also note that the case $s=0.5$ for Case ~\ref{case:fracdrift} is not included in the theory developed here.}
\end{figure}

To appreciate the combined effect of the order of the fractional Laplacian, the drift, and the second order operator, Figure~\ref{fig:qualitative} depicts the solutions in different settings.

\subsection{Two dimensional qualitative experiments}
\label{sub:qualitative2d}

In all the two dimensional examples presented in this section, we compute $a_{s,h}^{k,M}(\cdot, \cdot)$ with $k=0.25$ and $M=4$.

\subsubsection{Unit ball domain}

We set $\Omega$ to be the unit ball, $\chi(x) = 3-6|x|^2$ and, for each case, we consider the following data:

\begin{enumerate}[$\bullet$]
  \item Case \ref{case:frac}, pure fractional diffusion: $f \equiv 1$. The results are shown in Figure~\ref{fig:disk}.
  \item Case \ref{case:fracdrift}, fractional diffusion with drift: $\bbeta = (-\tfrac12,0)\Tr$, and
  \[
    f(x,y) = \begin{dcases}
               2, & (x-\tfrac12)^2 + y^2 <\tfrac14, \\ 0, & (x-\tfrac12)^2 + y^2 \geq \tfrac14.
             \end{dcases}
  \]
  The approximate solution is shown in Figure~\ref{fig:2ddrift}.
  
  \item Case \ref{case:diff}, integro--differential case: $A=0.3 \calI$, $c \equiv 0$, $\bbeta = (-\tfrac12,0)\Tr$, and  $f \equiv 1$. The approximate solution is shown in Figure~\ref{fig:2dintegrodifferential}.
\end{enumerate}

The coarse subdivision of $\Omega$ is described in \cite{BLP17} and uniform refinements are performed to create a sequence of meshes $\mathcal T_{h_j}$, $j \geq 1$. 

\begin{figure}[hbt!]
\label{fig:disk}
  \begin{center}
    \begin{tabular}{cccc}
   \includegraphics[scale = 0.1]{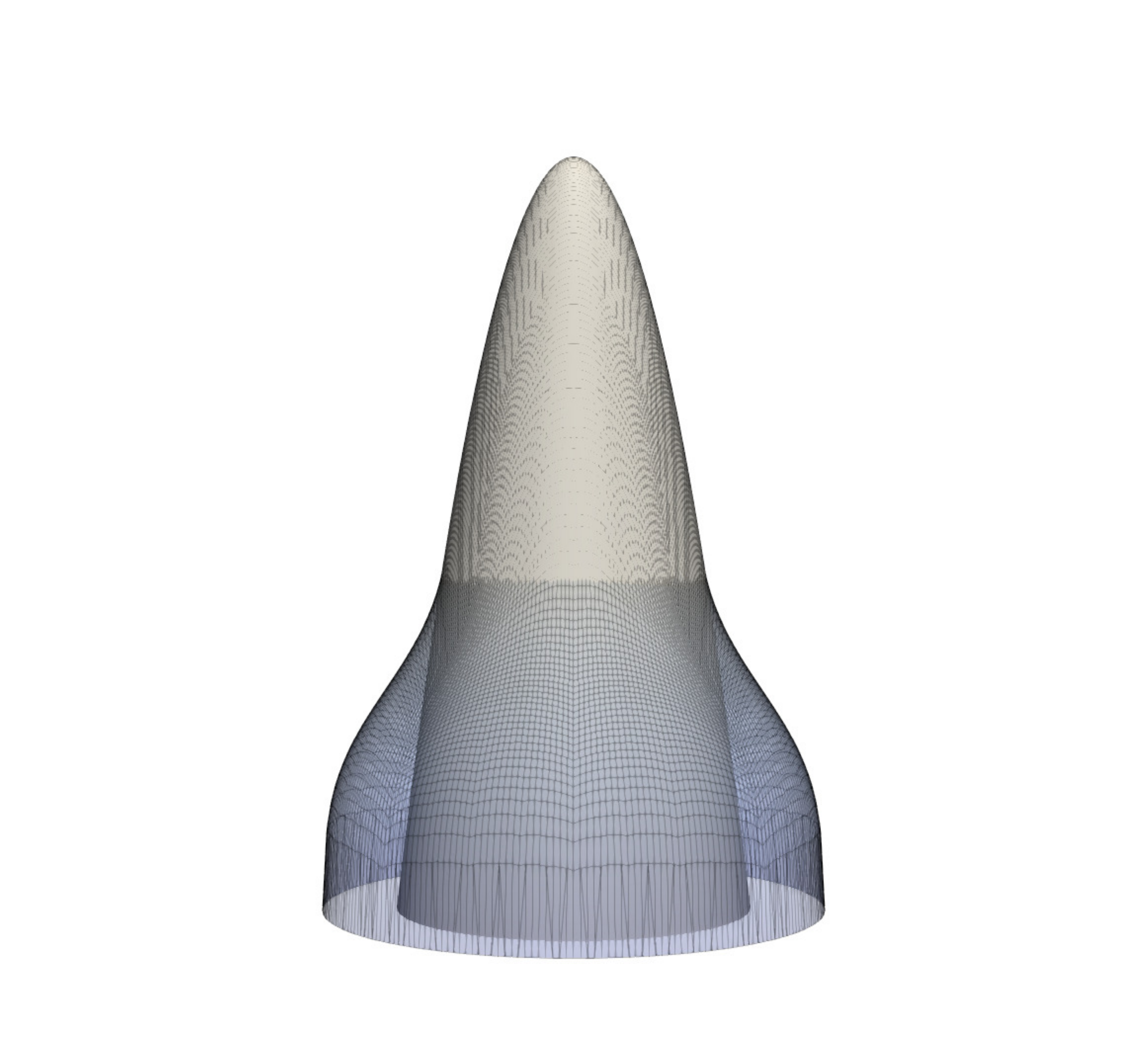} 
    & \!\!\!\!\!\!\!\!\!\!\!\!\!\includegraphics[scale = 0.1]{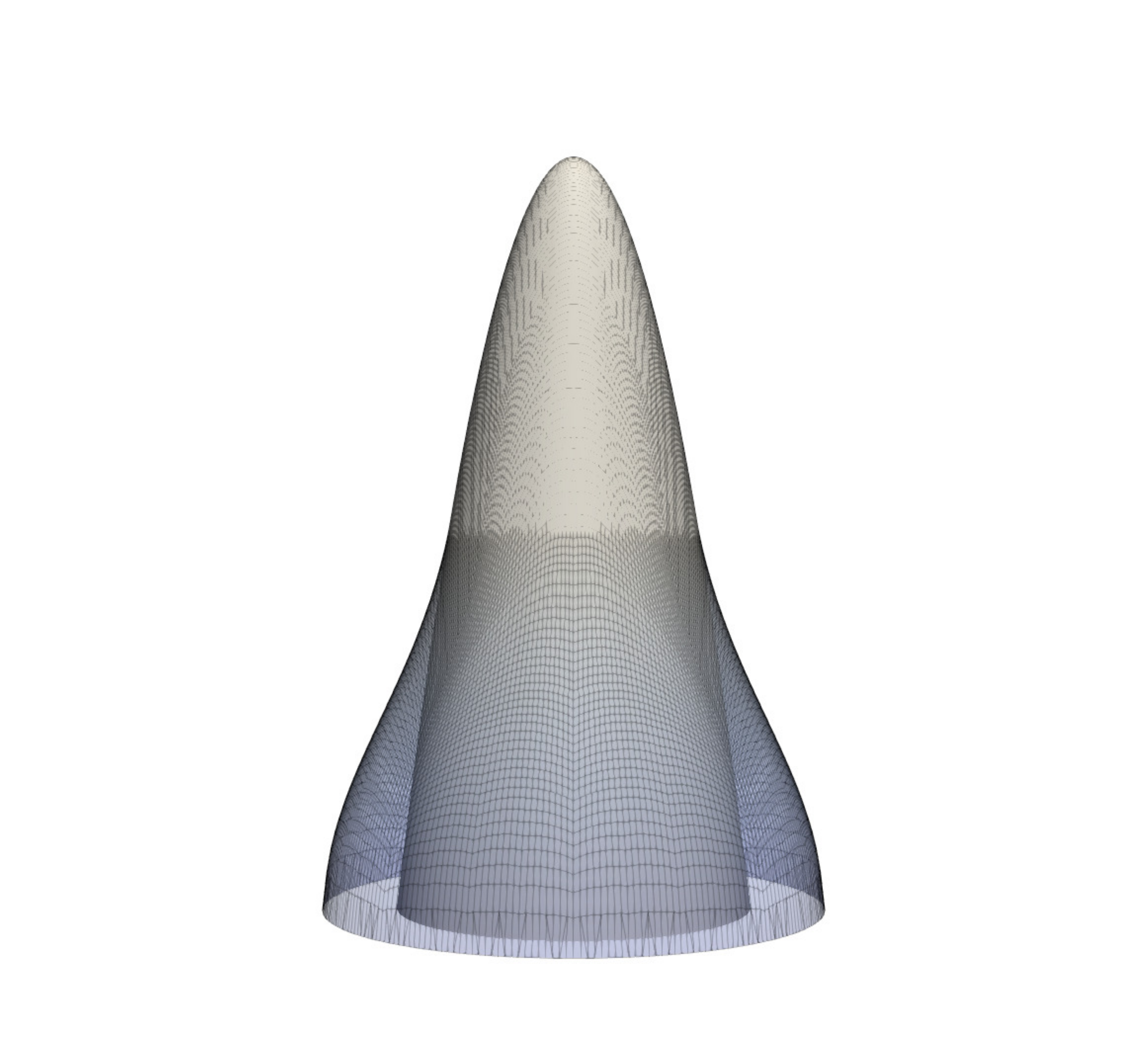}
     & \!\!\!\!\!\!\!\!\!\!\!\!\!\includegraphics[scale = 0.1]{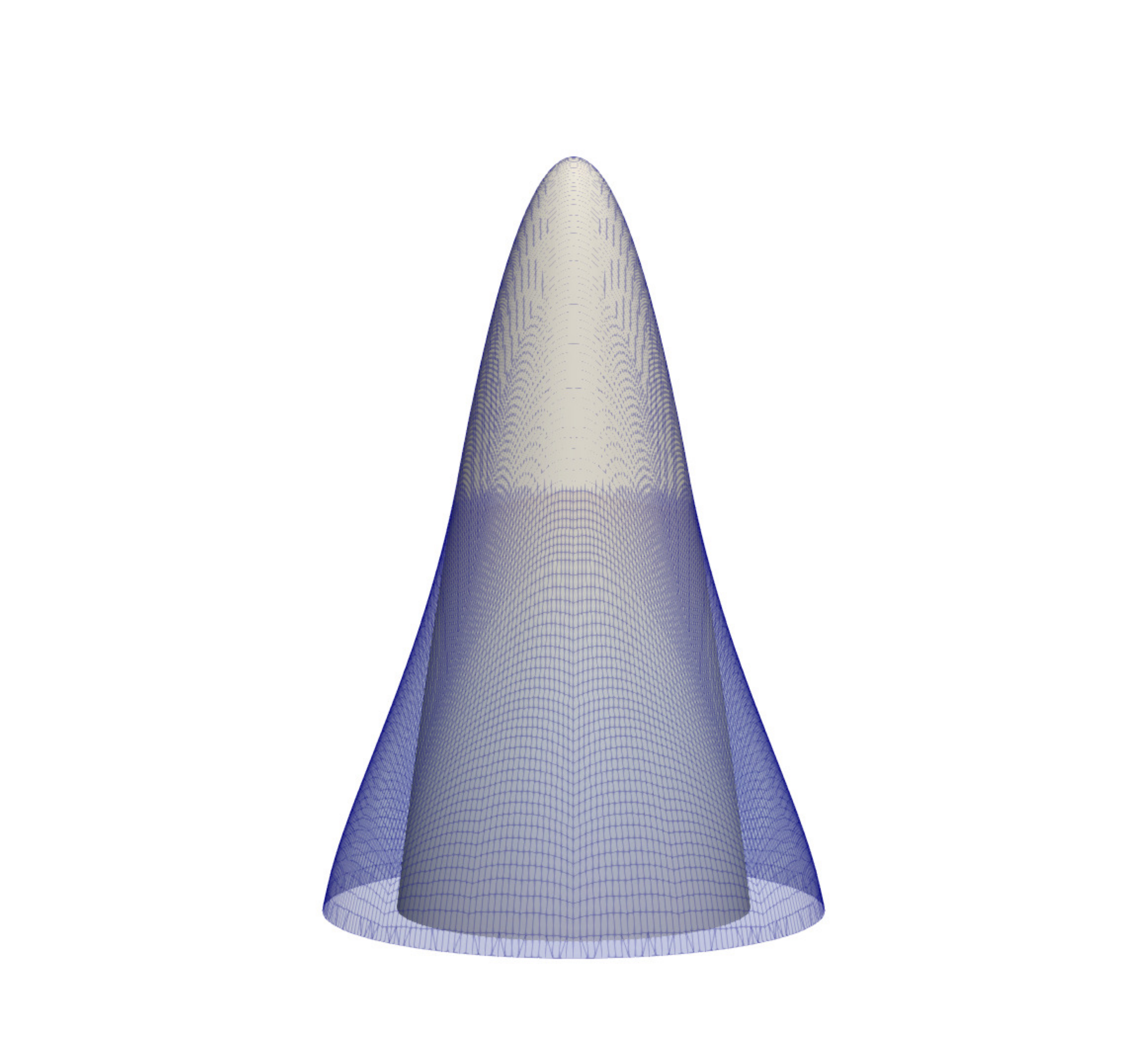} 
    \end{tabular}
  \end{center}
\caption{Case~\ref{case:frac}: Pure fractional diffusion case in the unit ball. Plot of $u_{h_6}$ for $s=0.3$ (left), $s=0.5$ (mid), $s=0.7$ (right). }
\end{figure}

\begin{figure}[hbt!]
\label{fig:2ddrift}
  \begin{center}
    \begin{tabular}{ccc}
      \!\!\!\!\!\!\! \!\!\!\!\!\!\! \!\!\!\!\!\!\!\includegraphics[scale = 0.2]{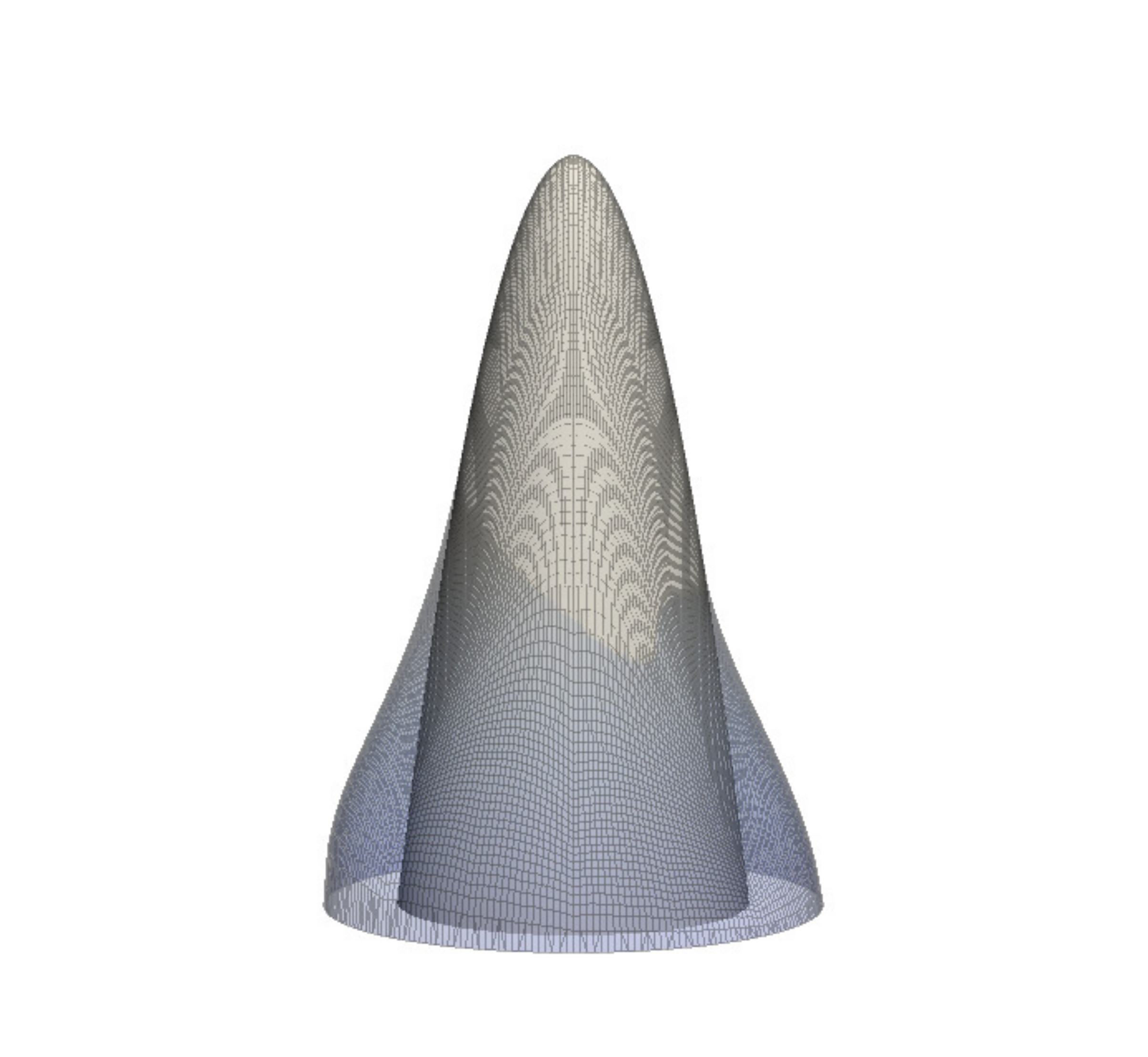} 
      & \!\!\!\!\!\!\!\!\!\!\!\!\!\!\!\!\!\! \!\!\!\!\!\!\!\includegraphics[scale = 0.2]{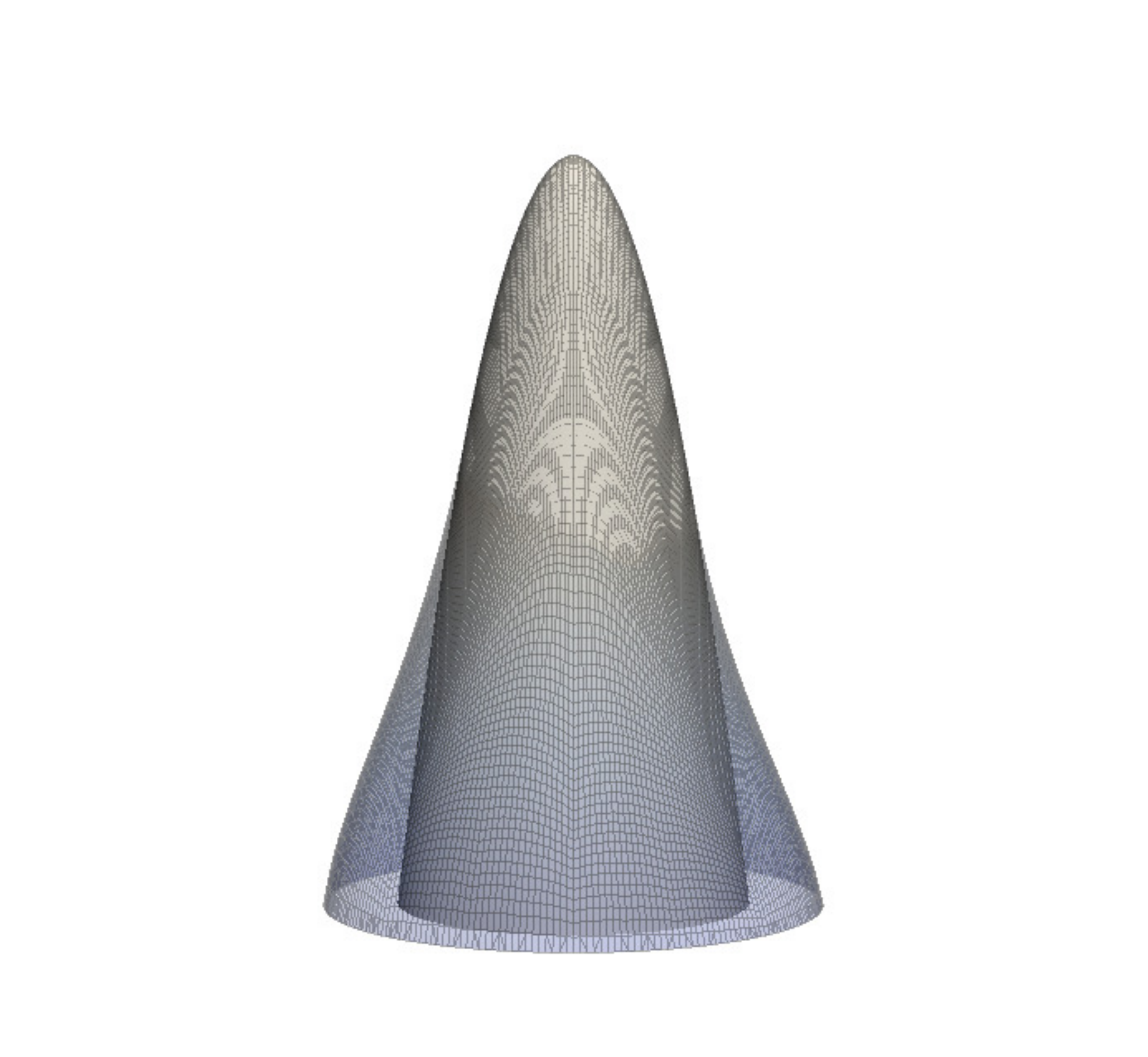} &
     \!\!\!\!\!\!\! \!\!\!\!\!\!\! \includegraphics[scale = 0.2]{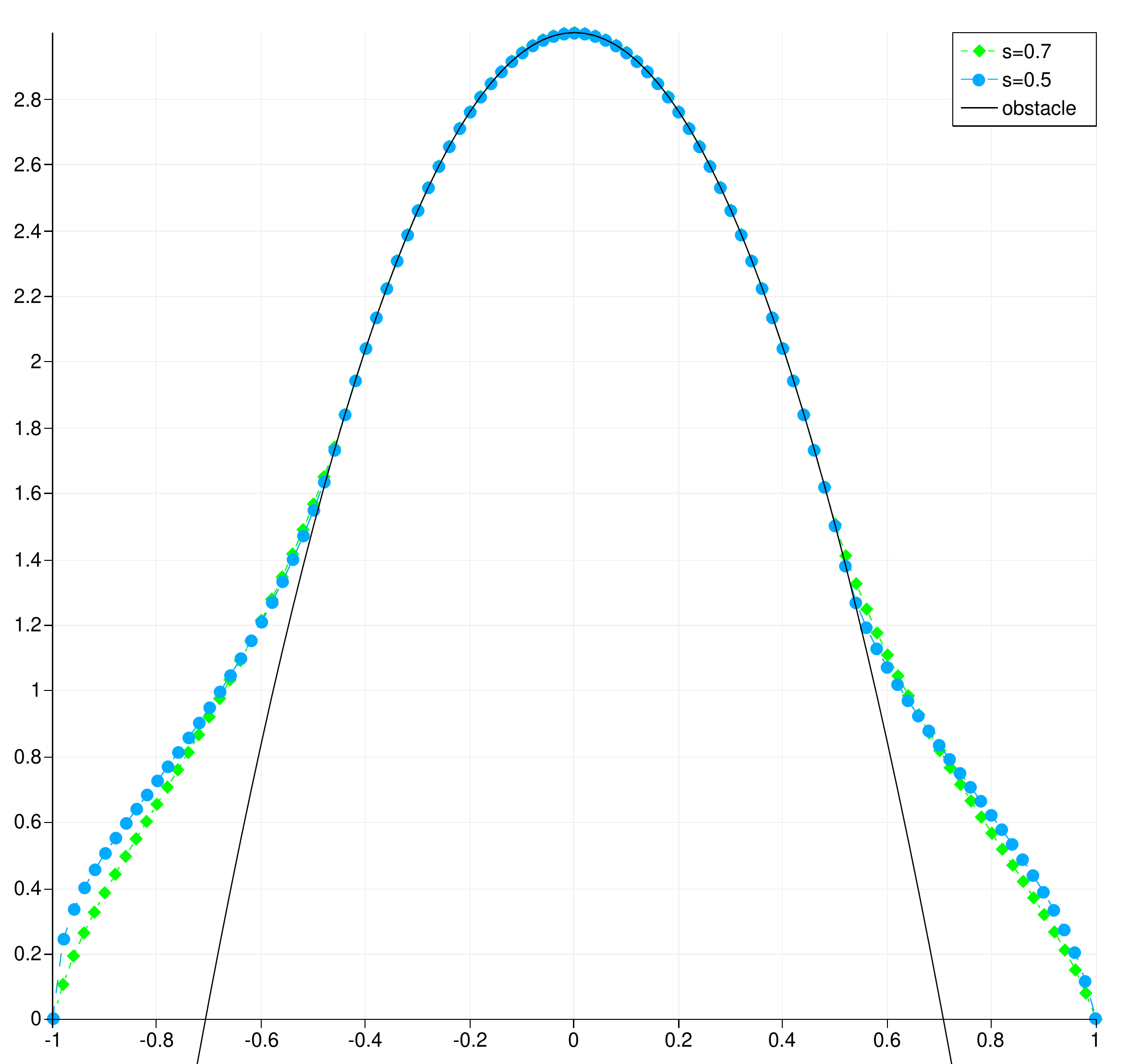}
        \end{tabular}
  \end{center}
\caption{Case~\ref{case:fracdrift}: Fractional diffusion case with drift. Plot of $u_{h_6}$ for $s=0.5$ (left). Plot of the solution for $s=0.7$ (mid). Cut along the $x$-axis (right). Note that the case $s=0.5$ is not included in the theory developed here. }
\end{figure}

\begin{figure}[hbt!]
\label{fig:2dintegrodifferential}
  \begin{center}
    \begin{tabular}{cccc}
       \includegraphics[scale = 0.15]{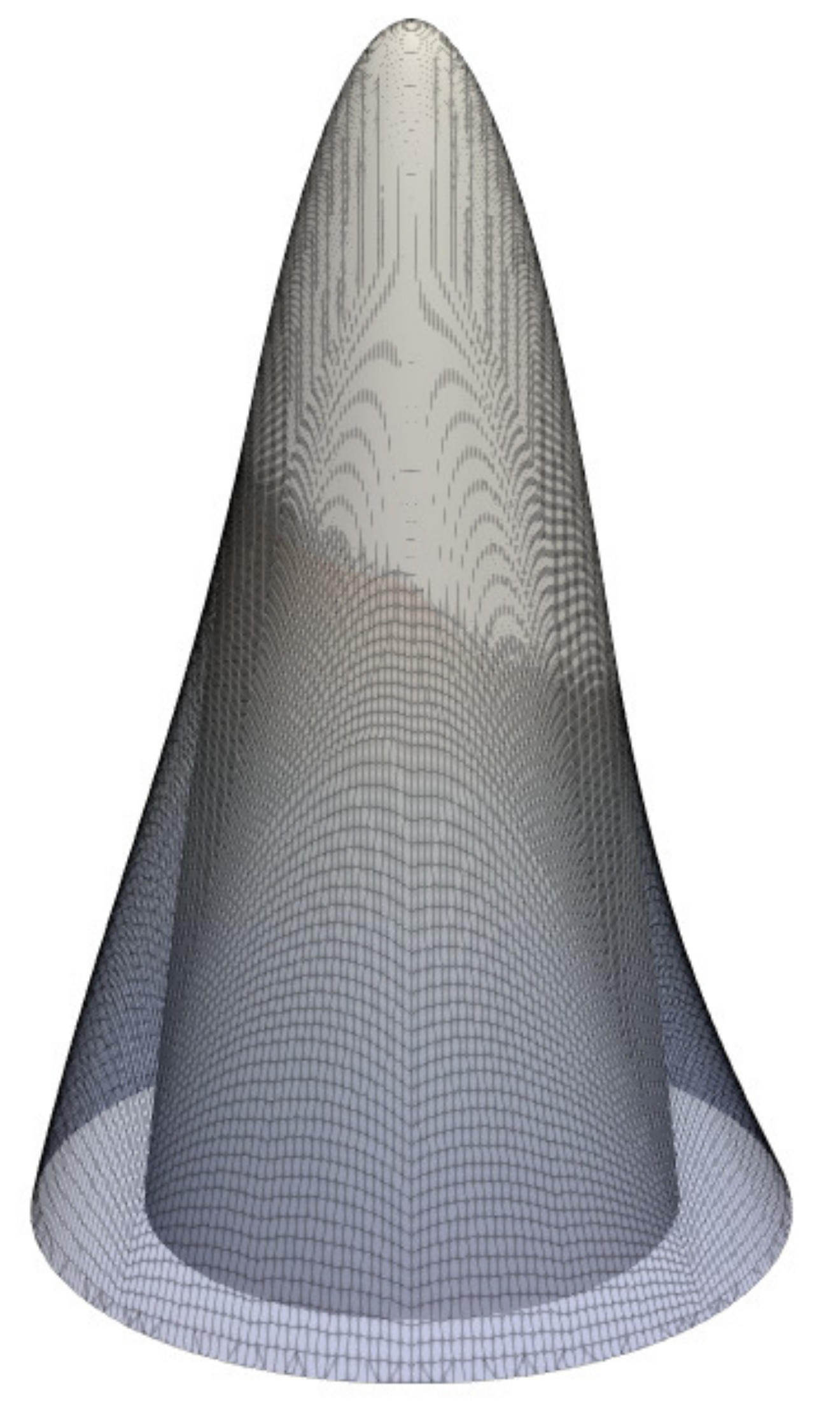} 
      &  \includegraphics[scale = 0.15]{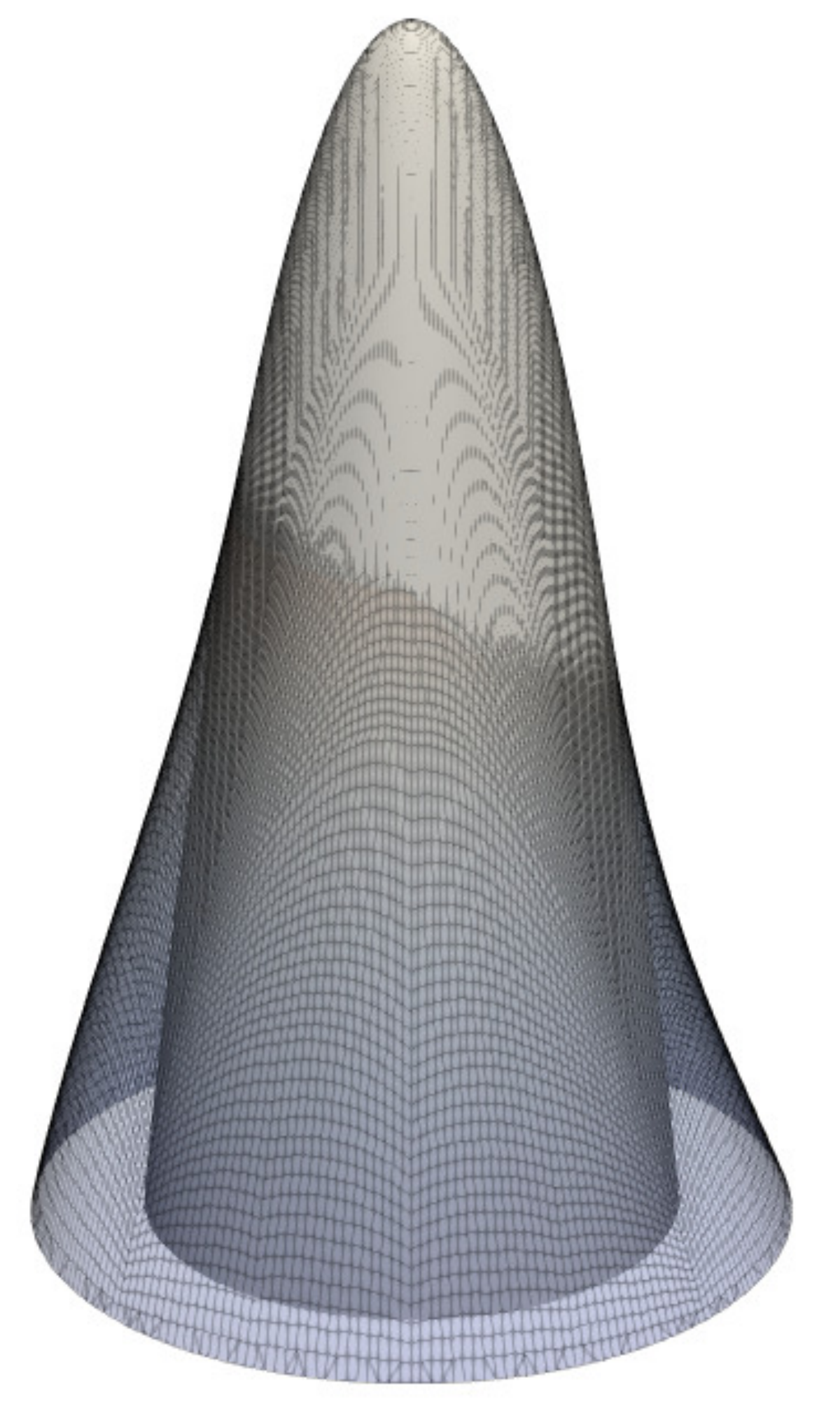} 
     & \includegraphics[scale = 0.15]{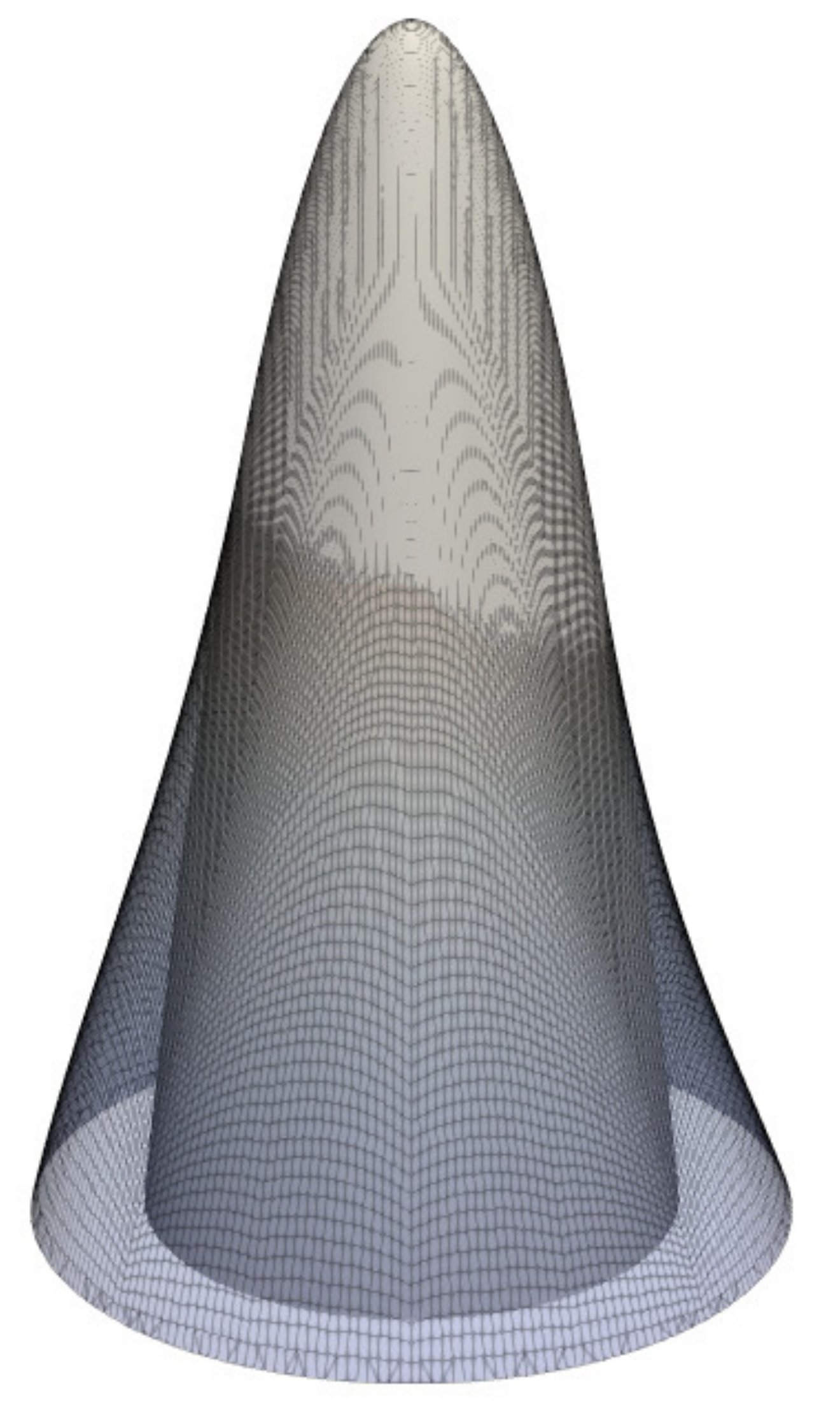} 
     & \includegraphics[scale = 0.2]{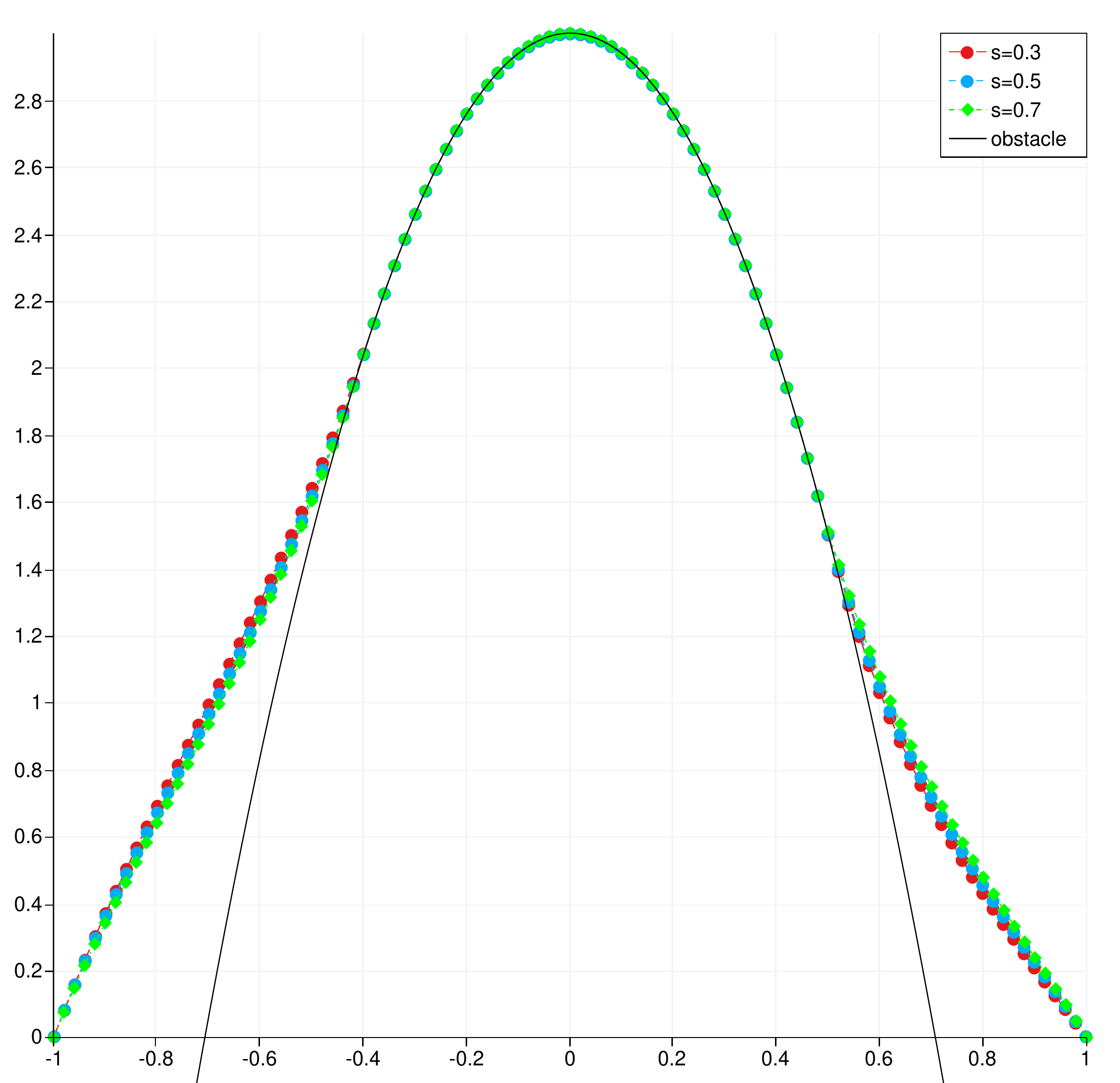}
    \end{tabular}
  \end{center}
\caption{Case~\ref{case:fracdrift}: Integro--differential case in the unit ball.  From left to right: plot of $u_{h_6}$ for $s=0.3$, $s=0.5$, $s=0.7$. Right: cut of along the $x$-axis.}
\end{figure}

The errors are computed using an overrefined solution $u_{\text{ref}}=u_{h_6}$ and we report the observed rate of convergence $\text{OROC} := \log(e_2/e_3)/\log(2)$ in Table~\ref{tab:2d-rate}. We note that the pure fractional diffusion case exhibits an observed the rate of convergence of $\mathcal O(h^{0.6})$, slightly better than predicted while for the other two cases matches the predictions of Theorem~\ref{thm:convergence_rate}.

\begin{table}[hbt!]
  \begin{center}
    \begin{tabular}{|c|c|c|c|}
     \hline
      & $s=0.3$ & $s=0.5$ & $s=0.7$ \\ \hline
      Case~\ref{case:frac} &  $0.57$ & $0.60$ & $0.67$ \\ \hline
      Case~\ref{case:fracdrift} & N/A & $0.59$ & $0.70$ \\ \hline
      Case~\ref{case:diff} & $1.00$ & $0.97$ & $0.89$ \\ \hline
    \end{tabular}
 \end{center}
  \caption{OROC for different cases and different values of the fractional power $s$.}
 \label{tab:2d-rate}
\end{table}

\subsubsection{L--shaped domain}

We now focus our attention to non--smooth domains and consider the standard L--shaped domain, \ie
$\Omega = (-\tfrac12,\tfrac12)^2\setminus(0,\tfrac12)^2$. We set $\chi(x,y) = 16^2x(x+\tfrac12)y(y-\tfrac12)$, and $f\equiv 1$. 
We consider the following two settings:
\begin{enumerate}[$\bullet$]
  \item Case~\ref{case:frac}, pure fractional diffusion in a non--smooth domain: Despite the fact that the theory developed in this work requires smooth domains, we provide numerical observations in Figure~\ref{fig:Lshape}.
  
  \item Case~\ref{case:diff}, integro--differential case: $A=0.3 \calI$, $c \equiv 0$, and $\bbeta = \mathbf 0$.
  The numerical results are gathered in Figure~\ref{fig:2dintegrodifferential-Lshaped}.
\end{enumerate}

The coarse subdivision of $\Omega$ consists of 12 squares each of diameter $\sqrt{2}/4$. Uniform refinements are performed to create a sequence of meshes $\mathcal T_{h_j}$, $j \geq 1$. 

\begin{figure}[hbt!]
\label{fig:Lshape}
  \begin{center}
    \begin{tabular}{cccc}
       \includegraphics[scale = 0.1]{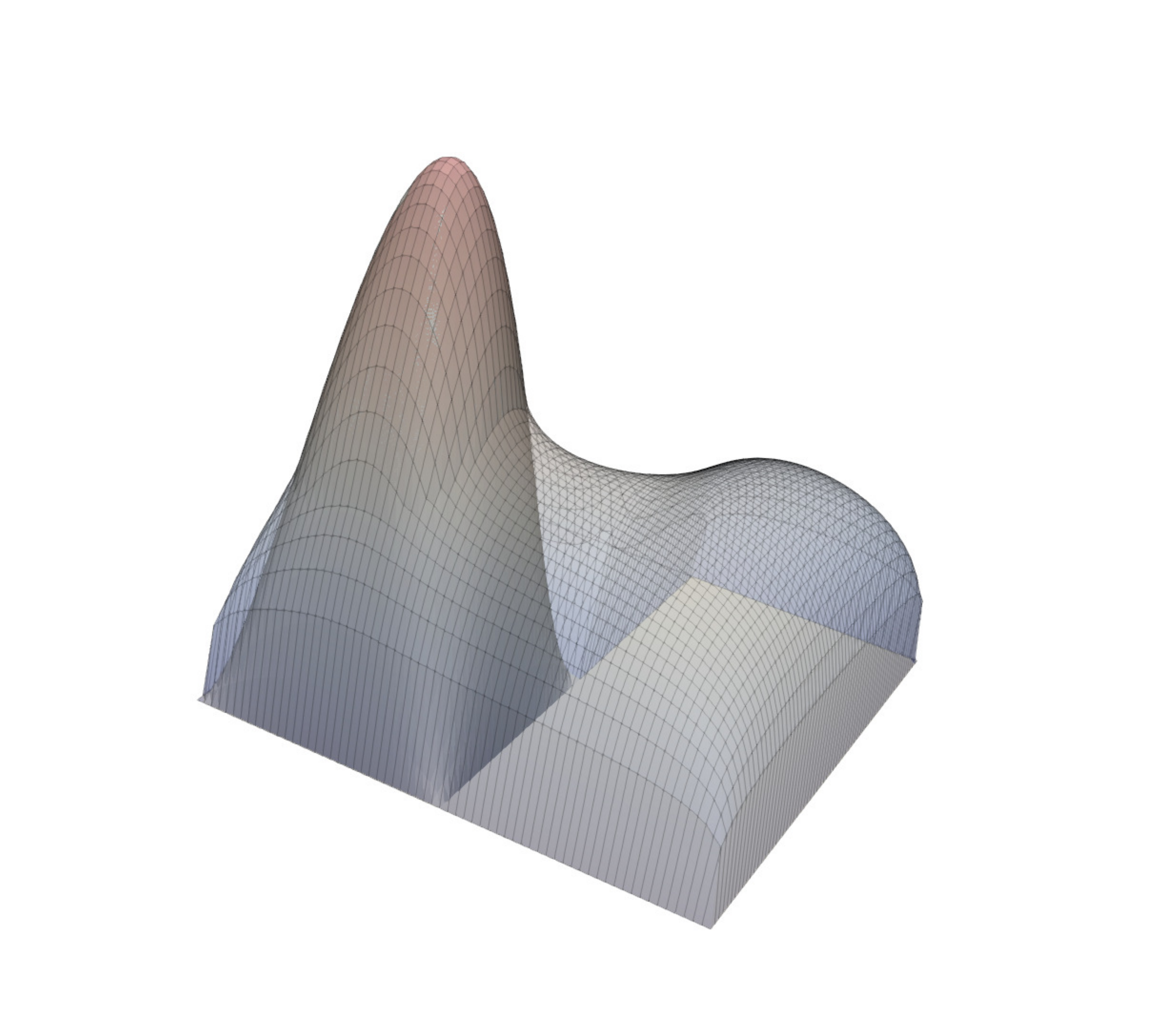} 
      &  \!\!\!\!\!\!\!\!\!\!\!\!\!\includegraphics[scale = 0.1]{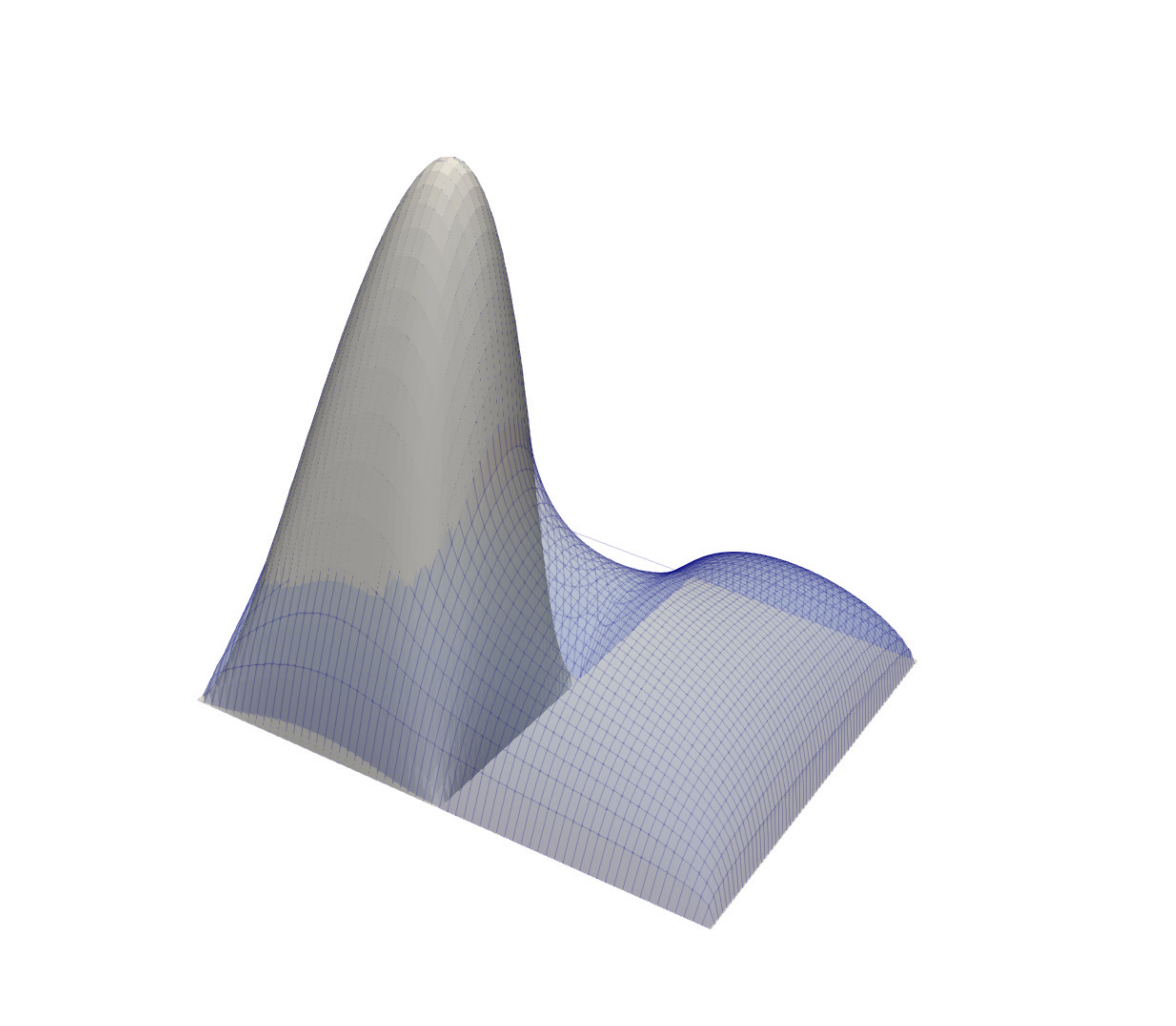} 
     & \!\!\!\!\!\!\!\!\!\!\!\!\!\includegraphics[scale = 0.1]{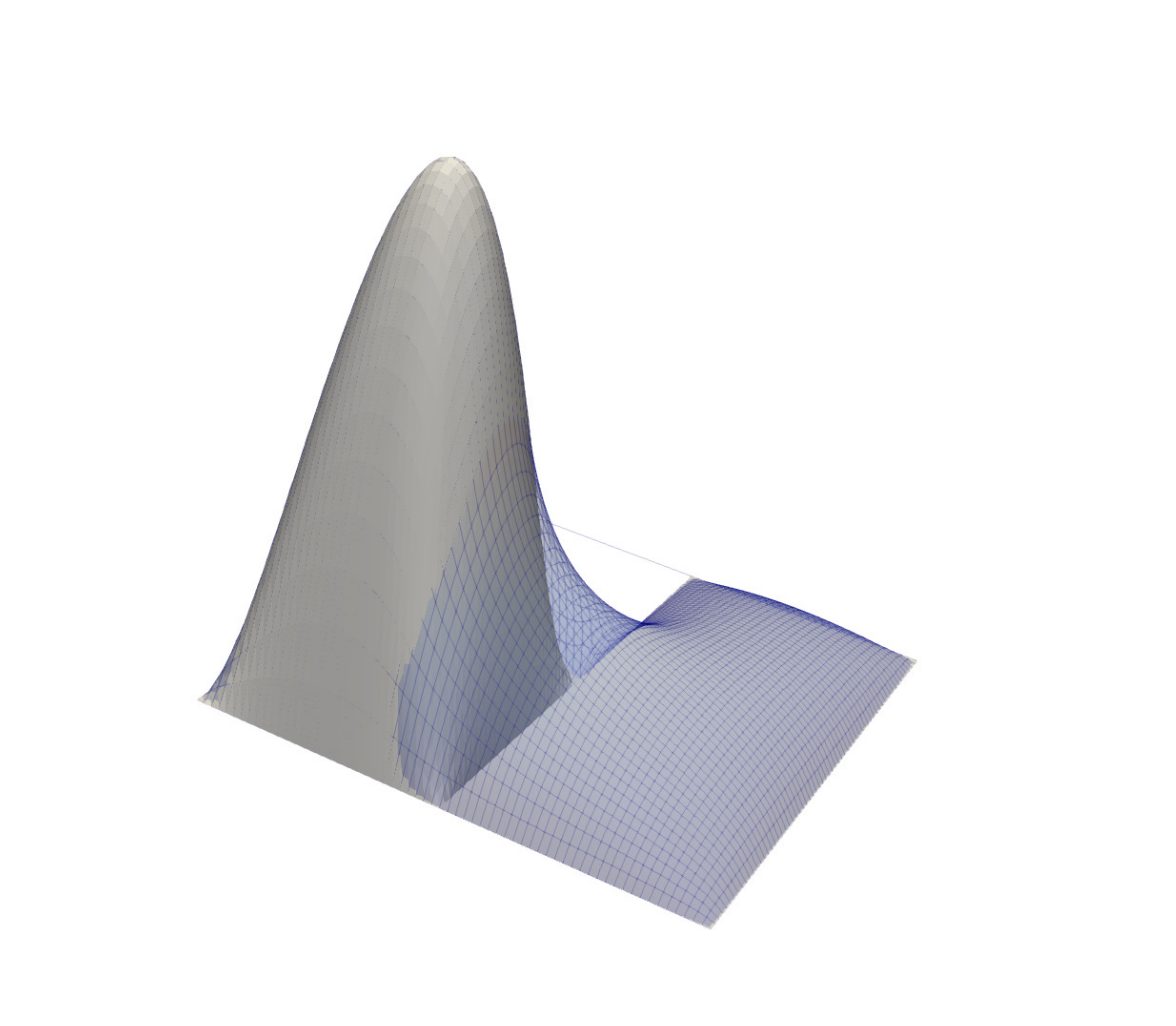} 
    \end{tabular}
  \end{center}
\caption{Solution for pure fractional diffusion case in a L--shaped domain. Plot of the solution for $s=0.3$ (left), $s=0.5$ (mid), $s=0.7$ (right). }
\end{figure}

\begin{figure}[hbt!]
\label{fig:2dintegrodifferential-Lshaped}
  \begin{center}
    \begin{tabular}{cccc}
       \includegraphics[scale = 0.15]{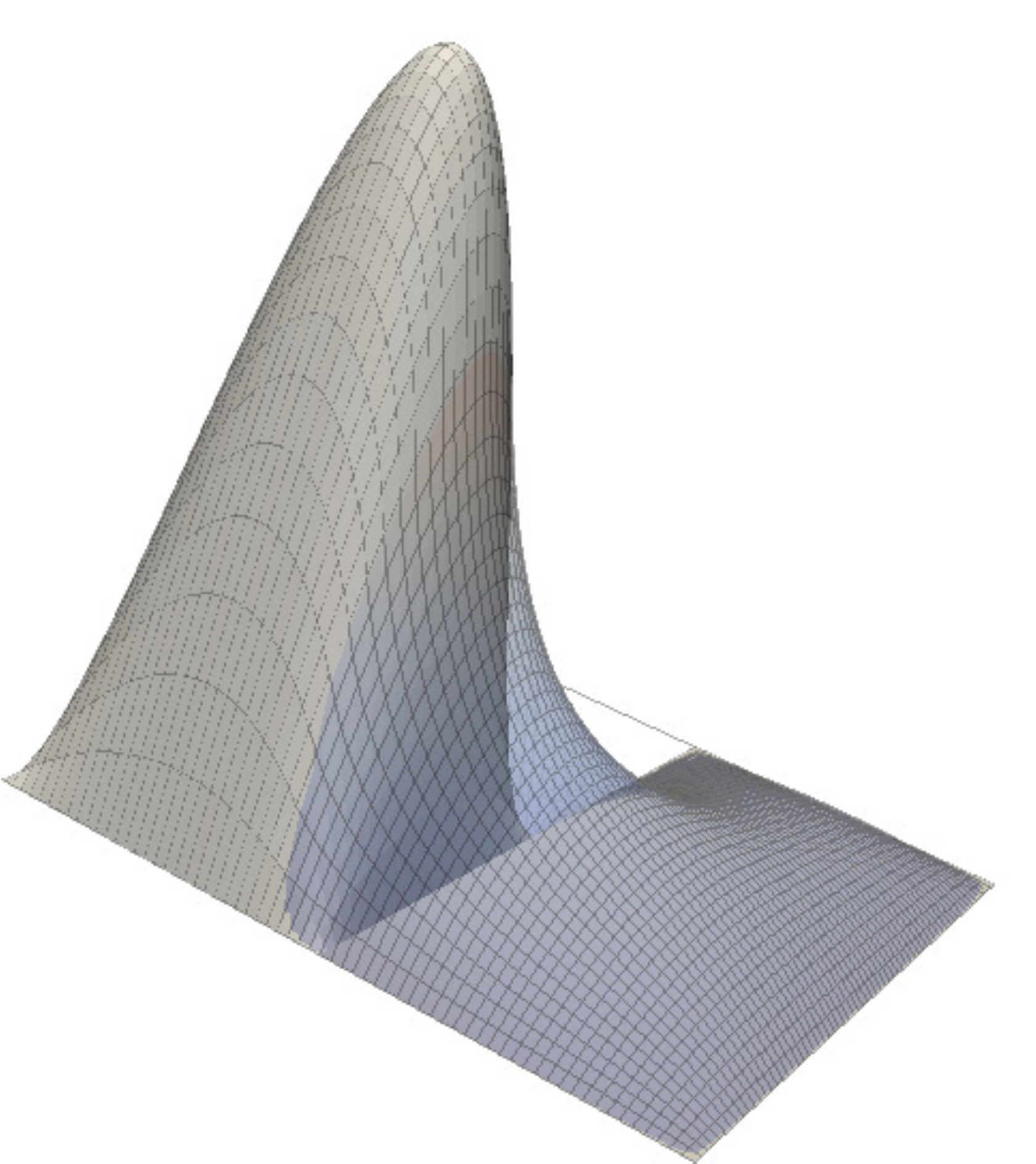} 
      & \!\!\!\!\!\!\!\!\! \includegraphics[scale = 0.15]{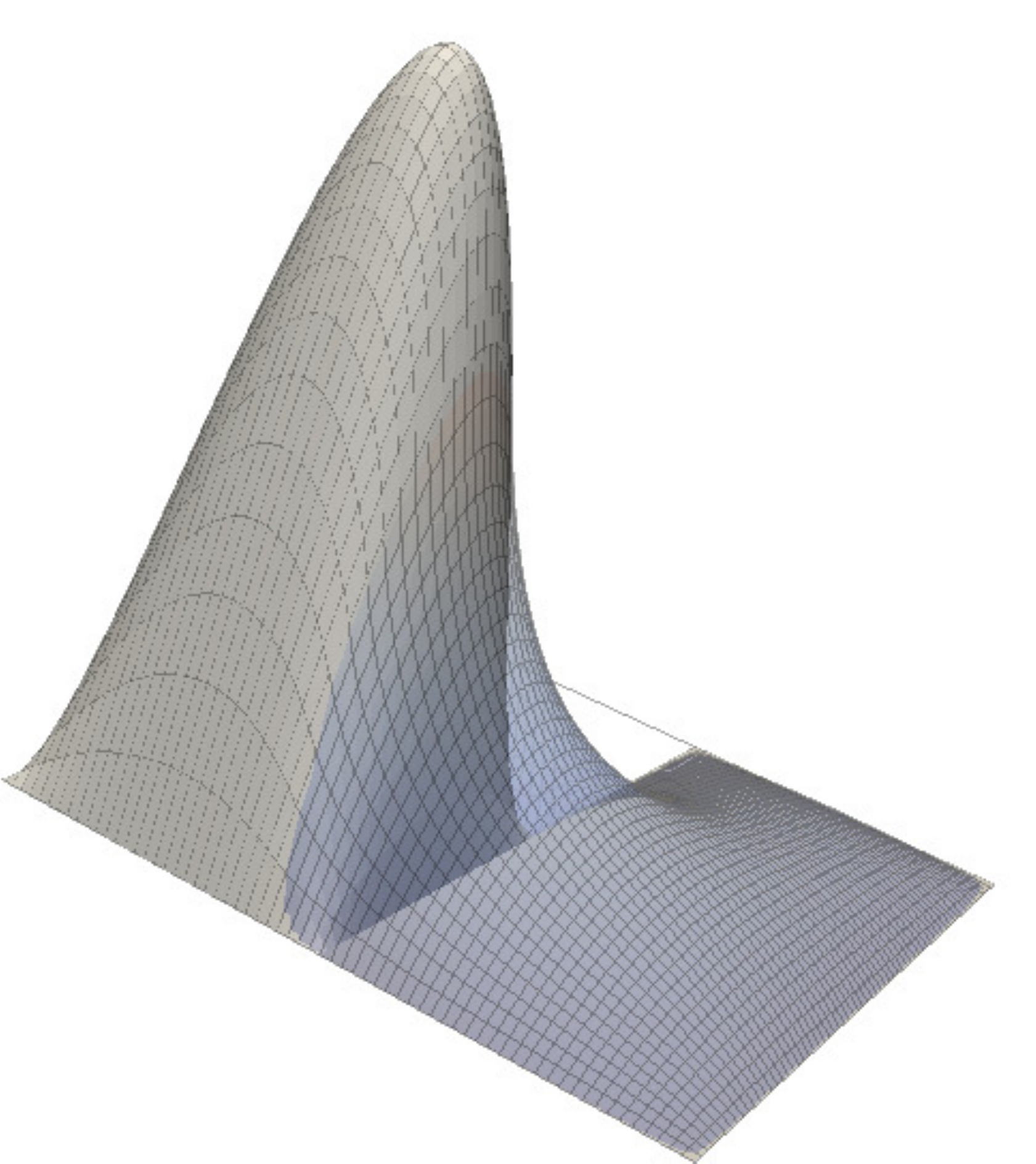} 
     & \!\!\!\!\!\!\!\! \includegraphics[scale = 0.15]{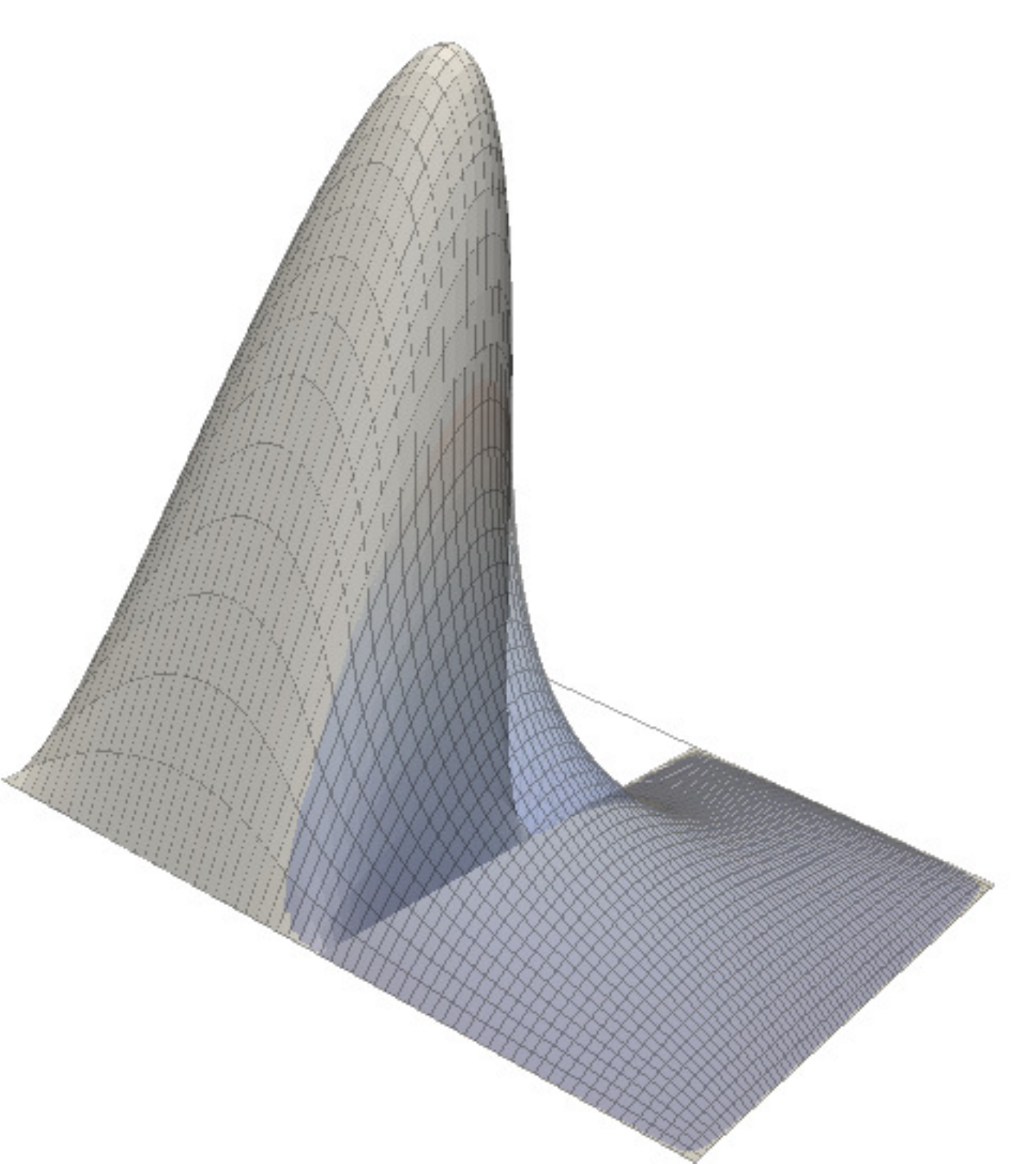} 
     &  \!\!\!\!\!\! \includegraphics[scale = 0.16]{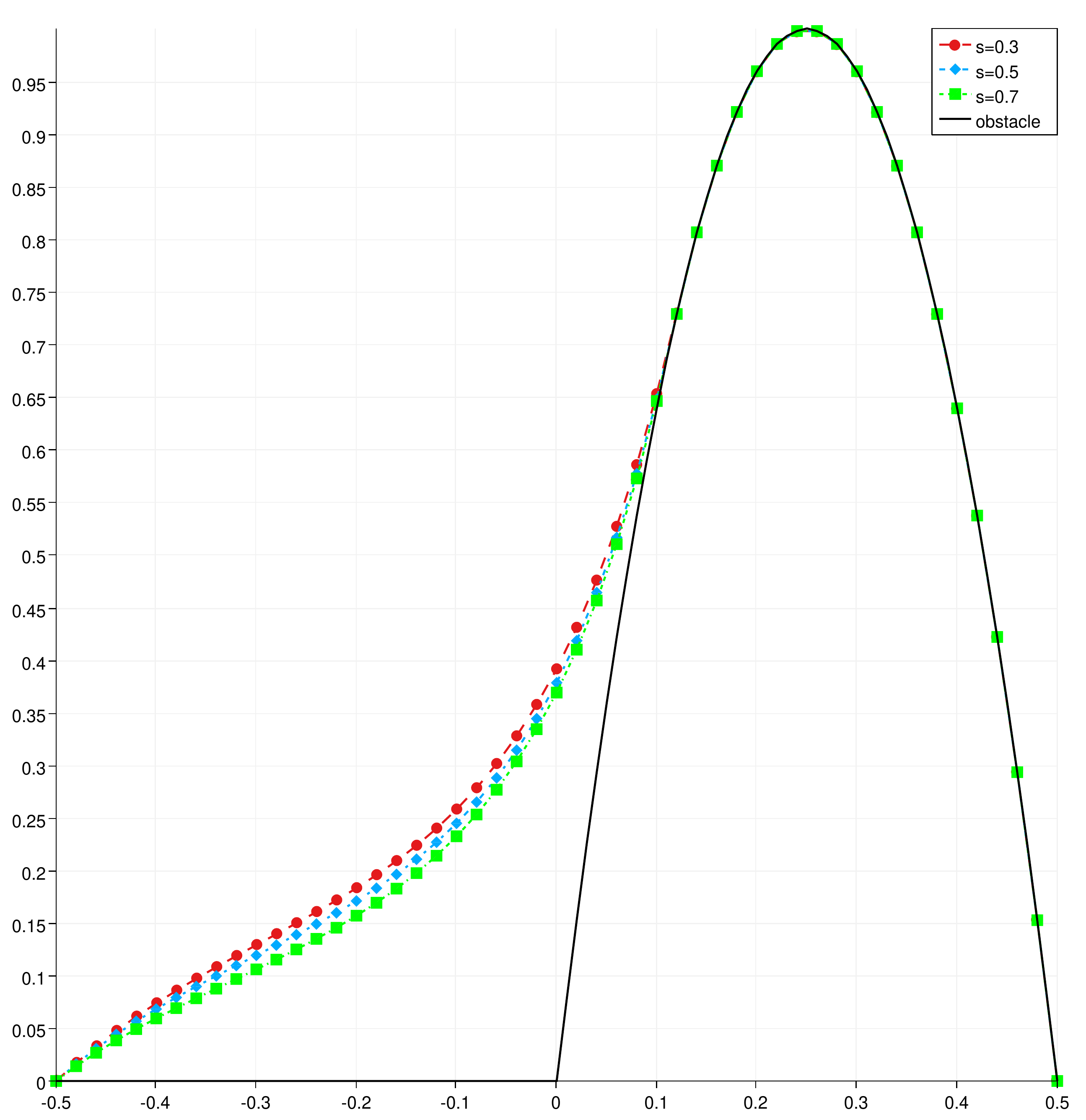}
    \end{tabular}
  \end{center}
\caption{Solution for the integro--differential case in a L--shaped domain. From left to right: plot of the solution for $s=0.3$, $s=0.5$, $s=0.7$. Right: section of the solution along $x=-0.25$.}
\end{figure}

The errors are computed using as reference solution $u_{\text{ref}}=u_{h_4}$.
We report the observed rate of convergence $\text{OROC} := \log(e_1/e_2)/\log(2)$ in Table~\ref{tab:2d-rate_2}. 
In all cases, the observed rate of convergence is better than the prediction given by Theorem~\ref{thm:convergence_rate}. We suppose that this is due to the use of a finer approximate solution to estimate the error.

\begin{table}[hbt!]
  \begin{center}
    \begin{tabular}{|c|c|c|c|}
     \hline
      & $s=0.3$ & $s=0.5$ & $s=0.7$ \\ \hline
      Case A & $0.66$ & $1.09$ & $1.29$ \\ \hline
      Case B & $1.00$ & $1.01$  & $1.02$ \\ \hline
    \end{tabular}
 \end{center}
  \caption{OROC for different cases and different values of the fractional power $s$.}
 \label{tab:2d-rate_2}
\end{table}

\bibliographystyle{plain}

\begin{thebibliography}{10}

\bibitem{MR3679865}
Gabriel Acosta, Francisco~M. Bersetche, and Juan~Pablo Borthagaray.
\newblock A short {FE} implementation for a 2d homogeneous {D}irichlet problem
  of a fractional {L}aplacian.
\newblock {\em Comput. Math. Appl.}, 74(4):784--816, 2017.

\bibitem{MR3620141}
Gabriel Acosta and Juan~Pablo Borthagaray.
\newblock A fractional {L}aplace equation: regularity of solutions and finite
  element approximations.
\newblock {\em SIAM J. Numer. Anal.}, 55(2):472--495, 2017.

\bibitem{dealII90}
G.~Alzetta, D.~Arndt, W.~Bangerth, V.~Boddu, B.~Brands, D.~Davydov,
  R.~Gassmoeller, T.~Heister, L.~Heltai, K.~Kormann, M.~Kronbichler, M.~Maier,
  J.-P. Pelteret, B.~Turcksin, and D.~Wells.
\newblock The \texttt{deal.II} library, version 9.0.
\newblock {\em Journal of Numerical Mathematics}, 2018, accepted.

\bibitem{MR1933726}
Pascal Auscher, Steve Hofmann, Michael Lacey, Alan McIntosh, and Ph.
  Tchamitchian.
\newblock The solution of the {K}ato square root problem for second order
  elliptic operators on {${\Bbb R}^n$}.
\newblock {\em Ann. of Math. (2)}, 156(2):633--654, 2002.

\bibitem{MR3309171}
S\"oren Bartels.
\newblock {\em Numerical methods for nonlinear partial differential equations},
  volume~47 of {\em Springer Series in Computational Mathematics}.
\newblock Springer, Cham, 2015.

\bibitem{Bonito2018}
Andrea Bonito, Juan~Pablo Borthagaray, Ricardo~H. Nochetto, Enrique
  Ot{\'a}rola, and Abner~J. Salgado.
\newblock Numerical methods for fractional diffusion.
\newblock {\em Computing and Visualization in Science}, Mar 2018.

\bibitem{BLP17}
Andrea Bonito, Wenyu Lei, and Joseph~E. Pasciak.
\newblock Numerical approximation of the integral fractional {L}aplacian.
\newblock {\em arXiv preprint arXiv:1707.04290}, 2017.

\bibitem{bonito2017sinc}
Andrea Bonito, Wenyu Lei, and Joseph~E Pasciak.
\newblock On sinc quadrature approximations of fractional powers of regularly
  accretive operators.
\newblock {\em Journal of Numerical Mathematics}, 2017.

\bibitem{MR3356020}
Andrea Bonito and Joseph~E. Pasciak.
\newblock Numerical approximation of fractional powers of elliptic operators.
\newblock {\em Math. Comp.}, 84(295):2083--2110, 2015.

\bibitem{MR3671494}
Andrea Bonito and Joseph~E. Pasciak.
\newblock Numerical approximation of fractional powers of regularly accretive
  operators.
\newblock {\em IMA J. Numer. Anal.}, 37(3):1245--1273, 2017.

\bibitem{PengXXXX}
Andrea Bonito, Joseph~E. Pasciak, and Peng Wei.
\newblock Numerical methods for time dependent fractional diffusion with drift.
\newblock In preparation.

\bibitem{BNS}
J.~P. Borthagaray, R.~H. Nochetto, and A.~J. Salgado.
\newblock Weighted sobolev regularity and rate of approximation of the obstacle
  problem for the integral fractional {L}aplacian.
\newblock arXiv:1806.08048, 2018.

\bibitem{BL02}
S.~I. Boyarchenko and S.~Z. Levendorski\u\i.
\newblock Perpetual {A}merican options under {L}\'evy processes.
\newblock {\em SIAM J. Control Optim.}, 40(6):1663--1696, 2002.

\bibitem{MR1651742}
James~H. Bramble, Joseph~E. Pasciak, and Panayot~S. Vassilevski.
\newblock Computational scales of {S}obolev norms with application to
  preconditioning.
\newblock {\em Math. Comp.}, 69(230):463--480, 2000.

\bibitem{MR1459060}
Mark Broadie and J\'er\^ome Detemple.
\newblock The valuation of {A}merican options on multiple assets.
\newblock {\em Math. Finance}, 7(3):241--286, 1997.

\bibitem{BurkovskaGunzburger}
Olena Burkovska and Max Gunzburger.
\newblock Regularity and approximation analyses of nonlocal variational
  equality and inequality problems.
\newblock {\em arXiv preprint arXiv:1804.10282}, 2018.

\bibitem{CHM15}
S.~N. Chandler-Wilde, D.~P. Hewett, and A.~Moiola.
\newblock Interpolation of {H}ilbert and {S}obolev spaces: quantitative
  estimates and counterexamples.
\newblock {\em Mathematika}, 61(2):414--443, 2015.

\bibitem{ChenNochetto}
Z.~Chen and R.~H. Nochetto.
\newblock Residual type a posteriori error estimates for elliptic obstacle
  problems.
\newblock {\em Numerische Mathematik}, 84(4):527--548, 2000.

\bibitem{MR1930132}
Philippe~G. Ciarlet.
\newblock {\em The finite element method for elliptic problems}, volume~40 of
  {\em Classics in Applied Mathematics}.
\newblock Society for Industrial and Applied Mathematics (SIAM), Philadelphia,
  PA, 2002.
\newblock Reprint of the 1978 original [North-Holland, Amsterdam; MR0520174 (58
  \#25001)].

\bibitem{dauge}
M.~Dauge.
\newblock Regularity and singularities in polyhedral domains., April, 2008.

\bibitem{MR3096457}
Marta D'Elia and Max Gunzburger.
\newblock The fractional {L}aplacian operator on bounded domains as a special
  case of the nonlocal diffusion operator.
\newblock {\em Comput. Math. Appl.}, 66(7):1245--1260, 2013.

\bibitem{ERNGuermond}
Alexandre Ern and Jean-Luc Guermond.
\newblock {\em Theory and practice of finite elements}, volume 159 of {\em
  Applied Mathematical Sciences}.
\newblock Springer-Verlag, New York, 2004.

\bibitem{MR1009785}
A.~Friedman.
\newblock {\em Variational principles and free-boundary problems}.
\newblock Robert E. Krieger Publishing Co., Inc., Malabar, FL, second edition,
  1988.

\bibitem{MR3276603}
Gerd Grubb.
\newblock Fractional {L}aplacians on domains, a development of {H}\"ormander's
  theory of {$\mu$}-transmission pseudodifferential operators.
\newblock {\em Adv. Math.}, 268:478--528, 2015.

\bibitem{KinderlehrerStampacchia}
David Kinderlehrer and Guido Stampacchia.
\newblock {\em An introduction to variational inequalities and their
  applications}, volume~31 of {\em Classics in Applied Mathematics}.
\newblock Society for Industrial and Applied Mathematics (SIAM), Philadelphia,
  PA, 2000.
\newblock Reprint of the 1980 original.

\bibitem{MR1171217}
John Lund and Kenneth~L. Bowers.
\newblock {\em Sinc methods for quadrature and differential equations}.
\newblock Society for Industrial and Applied Mathematics (SIAM), Philadelphia,
  PA, 1992.

\bibitem{MR2073930}
Ana-Maria Matache, Tobias von Petersdorff, and Christoph Schwab.
\newblock Fast deterministic pricing of options on {L}\'evy driven assets.
\newblock {\em M2AN Math. Model. Numer. Anal.}, 38(1):37--71, 2004.

\bibitem{MNS17}
Roberta Musina, Alexander~I. Nazarov, and Konijeti Sreenadh.
\newblock Variational inequalities for the fractional {L}aplacian.
\newblock {\em Potential Anal.}, 46(3):485--498, 2017.

\bibitem{MR880369}
Jos\'e-Francisco Rodrigues.
\newblock {\em Obstacle problems in mathematical physics}, volume 134 of {\em
  North-Holland Mathematics Studies}.
\newblock North-Holland Publishing Co., Amsterdam, 1987.
\newblock Notas de Matem\'atica [Mathematical Notes], 114.

\bibitem{MR3216831}
Xavier Ros-Oton and Joaquim Serra.
\newblock The extremal solution for the fractional {L}aplacian.
\newblock {\em Calc. Var. Partial Differential Equations}, 50(3-4):723--750,
  2014.

\bibitem{MR0209834}
Laurent Schwartz.
\newblock {\em Th\'eorie des distributions}.
\newblock Publications de l'Institut de Math\'ematique de l'Universit\'e de
  Strasbourg, No. IX-X. Nouvelle \'edition, enti\'erement corrig\'ee, refondue
  et augment\'ee. Hermann, Paris, 1966.

\bibitem{MR1011446}
L.~R. Scott and S.~Zhang.
\newblock Finite element interpolation of nonsmooth functions satisfying
  boundary conditions.
\newblock {\em Math. Comp.}, 54(190):483--493, 1990.

\bibitem{MR3090147}
Raffaella Servadei and Enrico Valdinoci.
\newblock Lewy-{S}tampacchia type estimates for variational inequalities driven
  by (non)local operators.
\newblock {\em Rev. Mat. Iberoam.}, 29(3):1091--1126, 2013.

\bibitem{TaylorME}
Michael~E. Taylor.
\newblock {\em Pseudodifferential operators}, volume~34 of {\em Princeton
  Mathematical Series}.
\newblock Princeton University Press, Princeton, N.J., 1981.

\bibitem{MR0214910}
M.~I. Vi{\v s}ik and G.~I. {\`E}skin.
\newblock Elliptic convolution equations in a bounded region and their
  applications.
\newblock {\em Uspehi Mat. Nauk}, 22(1 (133)):15--76, 1967.

\bibitem{MR2012831}
V.~S. Vladimirov.
\newblock {\em Methods of the theory of generalized functions}, volume~6 of
  {\em Analytical Methods and Special Functions}.
\newblock Taylor \& Francis, London, 2002.

\bibitem{XuThesis}
Jinchao Xu.
\newblock {\em Theory of multilevel methods}, volume 8924558.
\newblock Cornell University Ithaca, NY, 1989.

\end{thebibliography}

\end{document}